\newcommand{\Walk}{{\mathfrak W}}
\newcommand{\adistance}{a_{\star}}
\newcommand{\B}{{\mathcal B}}
\newcommand{\G}{{\mathcal G}}
\newcommand{\Set}{{\mathcal S}}
\newcommand{\whp}{with high probability}
\newcommand{\V}{\mathcal{V}}
\newcommand{\W}{\mathcal{W}}
\newcommand{\N}{{\mathcal N}}
\newcommand{\A}{{\mathcal A}}
\newcommand{\Bin}[2]{\textrm{Bin}\left(#1,#2\right)}
\newcommand{\E}{\mathbb{E}}
\newcommand{\Var}{{\rm Var}}
\newcommand{\Pra}[1]{\Pr\left\{#1\right\}}
\newcommand{\PraG}[1]{\text{Pr}_{\G}\left\{#1\right\}}
\newcommand{\Gnm}[1]{\mathcal{G}_{#1}\left(n,m,p\right)}
\newcommand{\Gnmp}{\Gnm{}}
\newcommand{\Bnm}[1]{\mathcal{B}\left(n,m,#1\right)}
\newcommand{\Bnmp}{\Bnm{p}}
\newcommand{\LARGEv}{\text{LARGE}}
\newcommand{\SMALLv}{\text{SMALL}}
\newcommand{\eps}{\varepsilon}
\newcommand{\dO}{d_0}
\newcommand{\djeden}{d_1}
\newcommand{\Dki}{D(k,i)}
\newcommand{\EDki}{\bar{D}(k,i)}
\newcommand{\EDkiO}{\bar{D}(k,i_0)}
\newcommand{\Dk}{D(k)}
\newcommand{\Dstar}{D^{\star}(k,i_0)}
\newcommand{\EDk}{\bar{D}(k)}
\newcommand{\EG}{\E_{\G}}
\newcommand{\dist}{{\rm{dist}}}
\newcommand{\EW}{{\mathfrak{W}}}
\newcommand{\ov}{{\overline{v}}}
\newcommand{\ok}{{\overline{k}}}
\newcommand{\op}{{\overline{p}}}
\newcommand{\onul}{{\overline{0}}}
\theoremstyle{plain}
\newtheorem{thm}{Theorem}
\newtheorem{lem}[thm]{Lemma}
\newtheorem{fact}[thm]{Fact}
\theoremstyle{definition}
\title{The cover time of a random walk in affiliation networks}
\author{Mindaugas Bloznelis\footnote{Institute of Computer Science, Vilnius University, 03225 Vilnius, Lithuania.
    }
	\ \
	Jerzy Jaworski\footnote{Faculty of Mathematics and Computer Science, Adam Mickiewicz University, 60-614 Pozna\'n, Poland} 
	\ \
	Katarzyna Rybarczyk\footnotemark[2]
	\footnote{supported by NCN (National Science Center) grant 2014/13/D/ST1/01175}	
}
\date{}
\begin{document}

\maketitle

\begin{abstract}

Many known networks have structure of affiliation networks, where each of $n$ network's nodes (actors) selects 
an attribute set from a given collection of $m$ attributes and two nodes (actors) establish adjacency relation
whenever they share a common attribute. We study behaviour of the random walk on such networks. For that purpose we use commonly used model of such networks -- random intersection graph. We establish  the cover time of  the simple random walk on the binomial random intersection graph 
${\cal G}(n,m,p)$ at the connectivity threshold and above it. 
We consider the range of $n,m,p$ where
the typical attribute is shared by (stochastically) bounded number of actors.
\end{abstract}

{\bf keywords: } 
complex networks, 
information networks, 
random graph,
random walk, 
cover time.

\smallskip
MSC-class: 05C81,
05C80, 
05C82,
91D30. 

\section{Introduction and results}\label{Introduction}

\subsection{Motivation}

Many known networks, such as for example sensor networks with random key predistribution, internet network, WWW, social networks, have apparent or hidden structure of affiliation networks (see \cite{GuillaumeLatapy2004}). In an affiliation network  
 each node (actor) is prescribed a finite set of 
 attributes and two actors establish adjacency relation 
 whenever they share  a common attribute. 
 For example, in the  sensor networks with random key predistribution two sensors are connected, when they have a randomly prescribed key in common,
 in film actor network two actors are adjacent if they 
 have played in the same movie, in 
 the collaboration network two scientists are adjacent if 
 they have co-authored a publication. The structure of the affiliation network with actors and attributes might be hidden (\cite{GuillaumeLatapy2004}). However it is plausible that the structure of actors and attributes determines the form of various networks including complex networks such as social networks, internet network, WWW. 

Random walks are a standard tool of data collection in large 
networks. 
 We analyse the behaviour of a random walk on affiliation networks.  
 More precisely we study the cover time of such random walk on the binomial random intersection graph, which is commonly used theoretical model of affiliation networks. 
 The cover time, i.e., the expected time needed to visit all  
 vertexes of the network, is a fundamental characteristic of a random 
 walk. We establish the first order asymptotics for the cover 
 time of the binomial  random intersection graph. Our results are 
 mathematically rigorous. They are inspired by the 
 fundamental study by Cooper and Frieze 
 \cite{CooperFrieze2007,CooperFrieze2008}
 of the cover 
 time of the Erd\H os-R\'enyi  random graph, where edges are 
 inserted independently at random. 
 
One of the motivations of our work is an analysis of the influence of clustering on the behaviour of a random walk. This is because an important 
feature of affiliation  networks and complex networks is the clustering 
property, i.e.,  
the tendency of nodes to cluster together by forming 
relatively small groups with a high density of ties 
within a group
\cite{NewmanStrogatzWatts2002}.
We are interested in whether and how the {\it clustering 
	property affects the random walk performance}, i.e. for how long a random walk might be ``stuck'' in a small tight ``community'' of nodes.

It is interesting and technically challenging problem to determine the cover time of the connected component of more general models of random intersection graphs. The most intersecting models would be for example wireless sensor networks \cite{DiPetro2004,YoganMakowski2012} or random intersection graph models with non-vanishing clustering coefficient and power law degree distribution \cite{BloznelisGJKR2015,Spirakisetal2013}.

\subsection{The model}

In this article we analyse the random intersection graph model introduced in \cite{karonski1999}, see also \cite{FriezeKaronski2016}. It was studied in the context of random walks already in \cite{SpirakisCover2013} however only partial results were obtained. 

In this section we define the random graph model $\G(n,m,p)$. We let $n,m\to\infty$ and use the asymptotic notation 
$o(\cdot)$, $O(\cdot)$, $\Omega(\cdot)$, $\Theta(\cdot)$, $\asymp$ explained in \cite{JansonLuczakRucinski2001}.
The phrase ``{\whp}'' will mean that the
probability of the  event under consideration tends to one as $n,m$ tend to infinity.

The binomial random intersection graph $\G(n,m,p)$ with
 vertex set $\V$ of size $n$ is defined using  an auxiliary set of attributes $\W$ of size $m$. 
 Every vertex $v\in \V$ is assigned a random set of attributes 
 $\W(v)\subset \W$ and two vertexes $u,v\in \V$ become adjacent whenever they share a common attribute, i.e.,
 $\W(u)\cap \W (v)\not=\emptyset$. We assume that  events $w\in \W(v)$ are independent and  have (the same) probability $p$. 

Noting that every set 
$\V(w)=\{v:\, w\in \W(v)\}\subset V$ of vertexes  
sharing an attribute $w\in \W$ induces a clique in the 
intersection graph  we can represent $\G(n,m,p)$ as a 
union of $m$  randomly located cliques. The sizes  
$|\V(w)|$, $w\in \W$, of these cliques are independent 
binomial random variables with the common  
distribution $\Bin{n}{p}$. 

In this paper we assume 
that 
the expected size of the typical clique $\E |\V(w)|=np=\Theta(1)$.
Furthermore, we focus on 
 the connectivity threshold, which for $\G(n,m,p)$ is defined 
by the relation
\begin{equation}\label{2019-09-30}
 mp(1-e^{-np})=\ln n,
\end{equation}
see \cite{Katarzyna2017}, \cite{Singer1995PhD}. Namely, for 
$a_n=mp(1-e^{-np})-\ln n$ tending to $+\infty$ the probability that 
$\G(n,m,p)$ is connected is $1-o(1)$. For 
 $a_n\to-\infty$ this probability is $o(1)$.
 Hence we will assume below that $m=\Theta(n\ln n)$. 

\subsection{Main result and related results}

We recall that given a connected graph $G$ with the vertex set $V$, $|V|<\infty$,
the cover time $C(G)=\max_{u\in V}C_u$, where $C_u$ is the expected number of steps needed by the simple random walk starting from vertex $u$ to visit all the vertexes of $G$. We prove the following theorem.

\begin{thm}\label{ThmMain}
Let $n, m\to+\infty$. Let $p=p(m,n)>0$ and 
$c=c(m,n)>1$ be such that $np=\Theta(1)$, $c=O(1)$
 and 
\begin{equation}\label{c}
mp(1-(1-p)^{n-1})=c\ln n
\ \ \
\text{and} 
\ \ \
(c-1)\ln n\to\infty.
\end{equation}
Then {\whp} 
the 
cover time of a random walk on $\Gnmp$ is 
\begin{equation}\label{2019-02-21}
	(1+o(1))\cdot\ln \left(\frac{np}{\ln\left(\frac{c-1}{c}(e^{np}-1)+1\right)}\right)\cdot \frac{np}{1-e^{-np}}\cdot c n \ln n.
\end{equation}
\end{thm}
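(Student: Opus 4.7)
My plan is to follow the Cooper--Frieze methodology for the cover time of random graphs, adapted to the intersection-graph setting; the principal new difficulty is that the local neighbourhood of a vertex is a union of cliques rather than being tree-like. I would split the argument into three phases: (i) control of the global and local structural parameters of $\Gnmp$, (ii) analysis of the first-visit time $T_v$ of a fixed vertex $v$, and (iii) passage from single-vertex hitting times to the cover time.

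For phase (i), I would derive \whp concentration estimates for the basic counts: $|\W(v)|\sim\Bin{m}{p}$ has mean $mp=\Theta(\ln n)$; each clique $\V(w)$ has size $\Bin{n}{p}$ with mean $np=\Theta(1)$; and the edge count satisfies $2|E(\G)|=(1+o(1))\,cn\ln n\cdot np/(1-e^{-np})$. Crucially, I need sharp control on the lower tail of the number of \emph{useful} attributes of $v$ (those shared with at least one other vertex), since this count is well approximated by $\Po{c\ln n}$ and governs the bottleneck vertex of the cover time. The expression $np/\ln\bigl(\tfrac{c-1}{c}(e^{np}-1)+1\bigr)$ in \eqref{2019-02-21} should emerge at this stage from a Chernoff-type optimisation that balances the tail of the useful-attribute count against the tail of the surrounding clique sizes.

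For phase (ii), I would use the Cooper--Frieze representation $\E[T_v]=(1+o(1))R_v/\pi_v$, where $\pi_v=d_v/(2|E(\G)|)$ and $R_v$ is the expected number of returns to $v$ during an excursion on the mixing-time scale. This first requires a mixing-time bound of order $\ln^{2}n$, which I would obtain by a conductance or spectral argument on $\G$ (or equivalently on the bipartite attribute--vertex incidence graph). The main computation is $R_v$: unlike in $G(n,p)$ where $R_v\to 1$, each excursion into a clique of size about $np+1$ contributes a non-trivial expected number of returns to $v$, and a local coupling with the walk on the disjoint union of $v$'s cliques (absorbed upon leaving a clique) yields the multiplicative correction $np/(1-e^{-np})$ visible in \eqref{2019-02-21}. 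An exponential approximation $\Pr(T_v>t)=(1+o(1))\exp(-t/\E[T_v])$ in the style of Aldous closes the phase.

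The main obstacle is phase (iii): establishing $C(\G)=(1+o(1))\,(\ln n)\cdot\max_v\E[T_v]$, matching the Matthews union upper bound with a sharp lower bound. This requires (a) approximate independence of the hitting times at a set of bottleneck vertices large enough to generate the $\ln n$ factor via extreme-value heuristics, and (b) showing that this extremal set does not fall into an anomalously slow subset. The clique structure complicates (a) because two low-degree vertices can still share attributes, so one must check that such pairs are \whp negligible in the extremal set; I expect this to be tractable via a first-moment computation on pairs of bad vertices combined with the Poisson tail estimates of phase (i). Assembling the three phases and substituting the resulting expressions for $\pi_v$, $R_v$, and the extremal value then yields \eqref{2019-02-21}.
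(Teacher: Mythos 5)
Your high-level architecture (structural typicality of $\G$, Cooper--Frieze return analysis via $R_v$ and $\pi_v$, degree-distribution control, second-moment closing argument) tracks the paper's proof closely, and your identification of the Poisson approximation for the useful-attribute count $|\W'(v)|\approx\Po{c\ln n}$ as the quantity governing the bottleneck is correct. However, two of your central claims are wrong and would lead the calculation astray.

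First, you misattribute the factor $\frac{np}{1-e^{-np}}$ in \eqref{2019-02-21} to the expected returns $R_v$, reasoning that excursions into cliques of size $\sim np$ contribute $\Theta(1)$ extra returns. They do not. The paper's Lemma~4 shows $R_{T,v}(1)=1+\bar p_v+O(\ln^{-2}n)$ with $\bar p_v=\Pr_\G\{\tau_v\le T-1\}\asymp\ln^{-1}n$; the clique structure makes the analysis of returns harder (more short cycles to rule out) but the net return correction is still $1+o(1)$, just as in $G(n,q)$. The factor $\frac{np}{1-e^{-np}}$ is purely a consequence of the edge count: $2|\mathcal{E}(\G)|=(1+o(1))mn^2p^2=(1+o(1))\frac{np}{1-e^{-np}}\,c\,n\ln n$ via $mp(1-e^{-np})\approx c\ln n$, so it already sits inside $\pi_v^{-1}\approx mn^2p^2/\deg(v)$. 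Hunting for it in a clique-excursion coupling would both waste effort and produce a wrong constant.

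Second, the reduction $C(\G)=(1+o(1))(\ln n)\cdot\max_v\E[T_v]$ via a Matthews-style matching bound is not the mechanism, and it is not even numerically correct: already for $G(n,c\ln n/n)$, Matthews gives $2|\mathcal{E}|\ln n/d_{\min}$ while the true answer is $\ln\!\big(\tfrac{c}{c-1}\big)\cdot 2|\mathcal{E}|$, and $\ln n/d_{\min}\ne\ln\!\big(\tfrac{c}{c-1}\big)$ in general. The paper instead fixes a target time $t_0=\lambda_0\,mn^2p^2$ and uses \eqref{EqCu}, $C_u\le t_0+1+\sum_v\sum_{s\ge t_0}\Pr_\G\{\A_s(v)\}$, whose right-hand side reduces (after Lemma~\ref{LemNotVisit} and \eqref{EqAt0upper}) to $(1+o(1))\sum_k D(k)\,\frac{mn^2p^2}{k}e^{-\lambda k}$. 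The constant $\lambda$ is determined by balancing the full low-degree tail $D(k)$ against the exponential decay, not by the single minimum-degree vertex; this is exactly where property {\bf P8} (concentration of the two-dimensional counts $D(k,i)$) is indispensable. Note in particular that $\lambda=\Theta(1+|\ln(c-1)|)$, so the $\ln n$ in the final formula is not an extreme-value $\ln n$ factor at all; it is already built into $mn^2p^2\asymp n\ln n$. For the matching lower bound the paper uses a second-moment argument on the count of unvisited special vertices with $|\W'(v)|=i_0$ and $\deg(v)\in I_1$ exactly (property {\bf P8c}), which requires the joint degree/attribute-count distribution, not just the marginal of $|\W'(v)|$ that your phase (i) provides. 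Your item (a)--(b) in phase (iii) correctly anticipates that one must check approximate independence of the non-visit events and that the special set is not pathological, but without the correct target $t_0$ and the joint distribution $D(k,i)$ this cannot be carried out.
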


In an earlier paper \cite{SpirakisCover2013},
 the $O(\ln n)$ upper bound on the mixing time of $\Gnmp$ has been shown for the model's parameters $p=4m^{-1}\ln n$ and 
$m=n^{\alpha}$, where $\alpha\le 1$ is fixed. Note that for $m=n^{\alpha}$, $\alpha\le 1$, the connectivity threshold is at $p=m^{-1}\ln n$. Interestingly, 
\cite{SpirakisCover2013}
lowerbounds the conductance, but of the related bipartite graph (see $\Bnmp$ in Section 2 below) instead of 
$\Gnmp$. In Theorem~\ref{ThmMain} we concentrate on the case where $m\asymp n\ln n$ therefore the range of parameters $n,m,p$ considered here does not intersect with that of 
\cite{SpirakisCover2013}. 

The result presented in Theorem~\ref{ThmMain} concerns theoretical model of affiliation networks. In what follows we would like to compare this result with known results concerning networks with independent links (Erd\H{o}s--R\'enyi random graph). Note that under assumptions of the Theorem~\ref{ThmMain}
$\Gnmp$ is just above the connectivity threshold. Indeed, for $np=\Theta(1)$ the quantities 
$(1-p)^{n-1}$ and  $e^{-np}$ are $O(n^{-1})$ close.
It is convenient to represent the connectivity threshold
in terms of the edge density
(i.e.,  the probability that $u,v\in \V$ are adjacent
in $\G(n,m,p)$). We denote by $p_I$ the edge density of $\G(n,m,p)$.  A simple calculation shows that
$p_I=1-(1-p^2)^m=mp^2(1+O(p^2))$.
Furthermore, at the connectivity threshold (\ref{2019-09-30}) the edge density $p_I$ approximately equals 
$(1-e^{-np})^{-1}np(n^{-1}\ln n)=:{\hat p}_I$. Note that
the latter expression 
differs by the factor 
$(1-e^{-np})^{-1}np >1$ from the edge density $n^{-1}\ln n$ that defines the connectivity threshold for the
Erd\H os-R\'enyi random graph on $n$ vertexes, see  \cite{JansonLuczakRucinski2001}. 

For the edge density $p_I$ defined by formula 
(\ref{c}) we have
$p_{I}=(c+O(n^{-1})){\hat p}_I$. Furthermore  
$(c-1)\ln n\to+\infty$ implies that $p_I>{\hat p}_I$.
Moreover, we obtain from (\ref{2019-02-21}) that the random intersection graph with edge density $p_I=c{\hat p}_I$, $c>1$, has the cover time
\begin{equation}\label{comparison}
(1+o(1))\lambda c{\hat p}_In^2,
\qquad
{\text{where}}
\qquad
\lambda=\ln \left(\frac{np}{\ln\left(\frac{c-1}{c}(e^{np}-1)+1\right)}\right).
\end{equation}
It is  interesting to compare (\ref{2019-02-21}) with the respective  cover time  of
the  Erd\H os - R\'enyi random graph  
$\G(n,q)$. While comparing two different random graph models (both just above their respective connectivity thresholds) one may set the reference point to be a connectivity threshold. Then  (\ref{comparison}) is compared  with the 
cover time of $\G(n,q)$ for the edge density $q$ being 
just above the
connectivity threshold ${\hat q}_B=n^{-1}\ln n$. 
The cover time of $\G(n,c{\hat q}_B)$, $c>1$, as $n\to+\infty$  is given by the formula, see \cite{CooperFrieze2007},
\begin{equation}\label{comparison2}
(1+o(1))\ln(c/(c-1))c{\hat q}_Bn^2.
\end{equation}
Interestingly,
 $\lambda=\lambda(np,c)$ is always less than $\ln(c/(c-1))$ (we show this  in Sect 4.2 below). Furthermore, we have $\lambda(np,c)\to \ln(c/(c-1))$ as $np\to 0$. 
The later fact is not surprising as $\G(n,m,p)$ 
is the union of iid random cliques 
$\V (w)$, $w\in \W$. For $np\to0$ the absolute majority of the cliques are either empty or induce a single edge of $\G(n,m,p)$. Therefore in this range $\G(n,m,p)$ starts looking  similar to a union of iid edges. But a union of iid edges represents the Er\H os-R\'enyi
 random graph.

On the other hand, in the case where edge densities of 
$\G(n,m,p)$ and $\G(n,q)$ are the same, the cover time 
of $\G(n,m,p)$ is always larger than $\G(n,q)$. To see 
this
we set the edge density of $\G(n,q)$ to be 
$q=c{\hat p}_I={\bar c}{\hat q}_B$, $c>1$, where
we write for short ${\bar c}=np(1-e^{-np})^{-1}c$.
By (\ref{comparison2}), the cover time 
of $\G(n,{\bar c}{\hat q}_B)$ is
\begin{displaymath}
(1+o(1))
\ln ({\bar c}/({\bar c}-1)){\bar c} {\hat q}_B
=
\ln ({\bar c}/({\bar c}-1))c{\hat p}_I.
\end{displaymath}
In Section 4.2 below we show that $\lambda>\ln({\bar c}/({\bar c}-1))$. 
Therefore, given the edge density 
(namely, $c{\hat p}_I={\bar c}{\hat q}_B$, $c>1$), the cover time of the random intersection graph 
is always larger than that of the binomial graph.
This can be explained by the fact that the abundance 
of cliques in $\G(n,m,p)$
may slow down the random walk considerably.
It is interesting to trace the relation between the ratio $\lambda/\ln({\bar c}/({\bar c}-1))$ and the expected size  of the typical clique $np=\E|{\cal V}(w)|$. In Figure \ref{FigComparison2} we present a numerical plot of the function $np\to \lambda(np,c)/\ln ({\bar c}/({\bar c}-1))$ for $0\le np\le 30$ and  $c=1.1$, $c=2$ and $c=10$.

 \begin{figure}
 	\includegraphics[width=14cm]{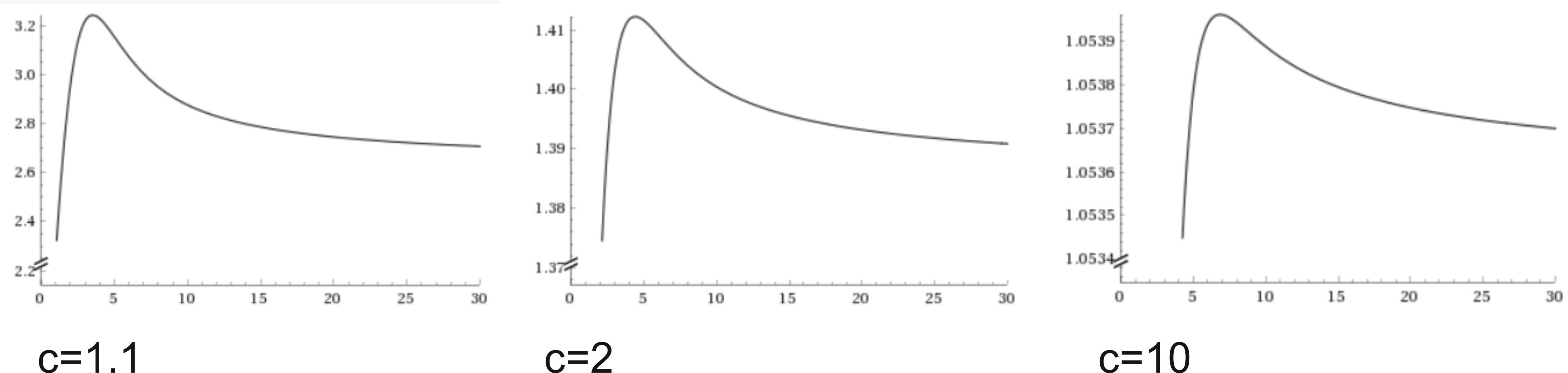}
 	\caption{
	The plot of 
	$np\to \lambda(np,c)/\ln ({\bar c}/({\bar c}-1))$.
}
\label{FigComparison2}
 \end{figure}

From a technical point of view the main reason that makes the cover times so different 
is  that at the connectivity thresholds 
the 
degree distributions of $\Gnmp$ and  
$\G(n,q)$
(and consequently the stationary distributions of respective random walks) differ a lot.
Therefore we believe that results on the degree distribution
of $\Gnmp$ obtained in Sect.\ref{SectionDegrees} below
 are interesting in its own right as they give an insight into the structure of random intersection graphs.

\subsection{Outline of the proof}

 We conclude this section with the outline of the proof.
 In the proof we use heuristics developed in works of Cooper and Frieze  \cite{CooperFrieze2007,CooperFrieze2008}. However in the case of random intersection graphs we need to develop some new results in order to understand the behavior of a random walk on graphs with more clustering.  

First we discuss that {\whp} $\Gnmp$ has 
certain properties which are listed in 
Section~\ref{SectionProperties}. We  
call an intersection graph with such 
properties \emph{typical}. 

In Section~\ref{SectionDegrees} we establish  the properties of the degree distribution  that, in fact,  define the cover time asymptotics  (\ref{2019-02-21}). We study the degree distribution in an interesting regime, where $np=\Theta(1)$. This makes the analysis more interesting and technically involved.

In  the following sections we consider the simple random walk 
$\EW_u$ 
on a typical $\G$ starting at a vertex $u\in \V$.  Given $\G$ and $u$ we denote by 
$C_u$ the expected time taken for the walk to visit every vertex of $\G$. Here we use ideas from \cite{CooperFrieze2008}. In particular, in order to get an upper bound on $C_u$, we use equation
 (42) of 
\cite{CooperFrieze2008} 
\begin{equation}\label{EqCu}
C_u\le t+1+\sum_v\sum_{s\ge t}
\PraG{\A_s(v)}, \text{for each $u\in\V$ and all $t\ge T$},
\end{equation} 
where $T$ is an integer and $\PraG{\A_s(v)}$ is probability that $\Walk_v$ on a given graph $\G$  does not visit $v$ in steps $T, T+1, \ldots, s$. For the lower bound we need to establish the second moment of the number of unvisited vertexes by time $t$. For that we need to evaluate $\PraG{\A_s(v)\cap \A_s(v')}$.

In particular in Section~\ref{SectionReturns-}  we concentrate on approximating the values $\PraG{\A_s(v)}$ and $\PraG{\A_s(v)\cap \A_s(v')}$. 
For that we study  the expected number of returns of a random walk. We note that in contrast to the Erd\H os-R\'enyi graph the typical vertex of $\G(n,m,p)$  may belong to quite a few small cycles.
 This makes our analysis of the return probabilities far more involved and interesting.

Finally, in Section \ref{SectionCoverTime}, combining the results of  Sections~\ref{SectionDegrees} and~\ref{SectionReturns-},  we
establish matching upper and lower bounds for the cover time that hold uniformly over the typical graphs.

\section{Typical graphs}\label{SectionProperties}

\subsection{Intersection graphs - notation}

In order to define needed properties we first introduce some notation.
For any graph $G$ we denote by ${\cal E}(G)$ its edge 
set. We remark that the intersection graph $\Gnmp$ with 
the vertex set $\V$ and attribute set $\W$ is 
obtained from
the
 bipartite graph $\Bnmp$ with bipartition $(\V,\W)$, 
 where each vertex  $v\in\V$ and  each attribute
 $w\in\W$ are linked independently and with probability $p$. Any two  vertexes
 $u,v\in \V$ are adjacent in the intersection graph 
 whenever they share a common neighbor in the 
 bipartite graph.
By $\G$ and $\B$ we denote realized instances of random graphs $\Gnmp$ and  
$\Bnmp$ and always assume that ${\cal G}$ {\it is defined by} ${\cal B}$. 
 For convenience, edges in $\B$  we call {\sl links}. Cycles 
 in $\B$ we  call $\B$--cycles. 
 It is easy to see that for $k\ge 3$ any $\B$-cycle  $v_1w_1v_2\ldots w_{k-1} v_kw_kv_1$ 
 ($v_i\in\V,w_i\in\W$) defines the cycle  $v_1v_2,\ldots v_kv_1$ in $\G$. Furthermore, any induced cycle $v_1v_2\ldots v_kv_1$ $(k\ge 3)$ 
 in $\G$ is defined by some $\B$--cycle $v_1w_1v_2\ldots w_{k-1}v_kw_kv_1$.   
Note that in $\G$ there might be many other cycles 
besides those defined by $\B$--cycles. We denote by  
$\dist(v,v')$ the distance between $v$ and $v'$ in $\G$.
 We additionally set notation:\\
the set of attributes of $v\in \V$, which contribute to at least one edge in $\G$
$$
\W'(v)=\{w\in\W(v): |\V_{\G}(w)|\ge 2\};
$$
the set of vertexes which have attribute $w$
$$
\V(w)=\{v\in\V: w\in \W(v) \}\};
$$
the set of neighbors of a 
vertex $v\in\V$ and the set of neighbors of $\Set\subseteq \V$
$$
\N(v)=\{v'\in\V\setminus\{v\}: \W(v)\cap\W(v')\neq\emptyset\},
\quad \N(\Set)=\bigcup_{v\in \Set}\N(v)\setminus \Set.
$$
Furthermore, for $i=0,1,2,\dots$ we denote by $\N_i(v)$ the 
 set of vertexes at distance $i$ from $v$ in $\G$.
 So that $\N_0(v)=\{v\}$ 
 and $|\N(v)|=|N_1(v)|=:\deg(v)$ is the degree of $v\in\V$.
 We denote by $\Dk$ 
 the number of vertexes of degree $k$ in $\G$ and 
 $\Dki$ the number of vertexes $v$ of degree $k$ such that $|\W'(v)|=i$. 
Let
$$\SMALLv=\{v:|\W'(v)|\le 0.1\ln n\}
\quad\text{ and }
\quad
\LARGEv=\V\setminus\SMALLv.$$
Vertices belonging to  $\SMALLv$ ($\LARGEv$) we call 
{\it small} ({\it large}).
%
Let
\begin{equation*}
\dO=mp(1-(1-p)^{n-1}),
\quad\quad
\djeden=nmp^2,
\qquad
\quad 
\Delta=\lceil\max\{4d_0,12d_1\}\rceil.
\end{equation*}
Note that  $\dO=\E |\W'(v)|$, $\djeden$ is the 
approximate expected degree 
and
$np=\Theta(1)$ implies
\begin{equation}\label{2019-02-25+1}
\dO
\asymp 
\djeden 
\asymp 
\Delta 
\asymp
\ln n.
\end{equation}
In our calculations 
we will frequently use the fact that 
$$mp(1-e^{-np})=d_0(1+O(n^{-1}))=(1+O(n^{-1}))c\ln n.
$$ 
By ${\Pr}_{\cal G}$ and $\EG $ we denote the conditional probability and expectation given ${\cal G}$. By $c',c''$ we denote positive constants that can be different in different places.
Throughout the proof the inequalities hold for $n$ large enough. If it does not influence the result, we consequently omit $\lfloor\cdot\rfloor$ and $\lceil\cdot\rceil$ for the sake of clarity of presentation.

\subsection{Typical intersection graphs}

\begin{lem}\label{LemProperties} Let $m,n\to+\infty$.
Assume that conditions of Theorem 1 hold. Then there  exists a constant $\adistance>0$ that may depend on the sequences $\{p(m,n)\}$ and 
$\{c(m,n)\}$, but not on $n,m$  such that {\whp} 
$\Gnmp$
and $\Bnmp$ have properties {\bf P0--P8} listed below.
\end{lem}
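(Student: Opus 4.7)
The plan is to split the eight properties into two groups that require rather different treatment: concentration statements about natural functionals of the bipartite representation ($|\W(v)|$, $|\W'(v)|$, $|\V(w)|$, $\deg(v)$, neighborhood sizes $|\N_i(v)|$) on one hand, and combinatorial forbidden-substructure statements (bounds on short $\B$-cycles, absence of dense small subgraphs, bounds on sizes and separations of $\SMALLv$-vertex clusters) on the other. In both cases I would work directly on $\Bnmp$, where the incidences $w\in\W(v)$ are iid Bernoulli($p$), rather than on $\G$ itself.

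For the concentration items I would apply Chernoff bounds. Each $|\V(w)|\sim\Bin{n}{p}$ and $|\W(v)|\sim\Bin{m}{p}$ is a clean binomial sum, and since $\dO=\E|\W'(v)|\asymp\djeden\asymp\ln n$ by (\ref{2019-02-25+1}), exponential tail estimates combined with a union bound over the $n$ vertices or $m$ attributes immediately give the desired $(1\pm\eps)$ concentration. The approximate degree $\djeden=nmp^2$ concentrates on a per-vertex basis in the same way, the $O(n^{-1})$ discrepancy between $(1-p)^{n-1}$ and $e^{-np}$ being harmless.

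For the forbidden-substructure items I would use first moment calculations on $\B$. The expected number of $\B$-cycles of length $2k$ is bounded by $\binom{n}{k}\binom{m}{k}k!(k-1)!\,p^{2k}=O((nmp^2)^k)=O((\ln n)^k)$, which is enough to show that only very few short $\B$-cycles pass through any one vertex and that no small vertex set spans many links. Expansion of neighborhoods $\N_i(v)$ for moderate $i$ then follows by a standard branching-process exploration: at each layer the number of freshly used attributes concentrates around a factor $\djeden\asymp\ln n$, which dominates $\sum_{j<i}|\N_j(v)|$ up to depth $i=\Theta(\ln n/\ln\ln n)$.

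The hard part will be the cluster of properties that involves $\SMALLv$. Since $\dO=(1+o(1))c\ln n$ with $c>1$, the event $v\in\SMALLv$ is a large-deviation event; a Chernoff bound on $|\W'(v)|$ gives $\Pra{v\in\SMALLv}\le n^{-\adistance}$ for some $\adistance=\adistance(c,np)>0$, so the expected number of small vertices is at most $n^{1-\adistance}$. What takes more care is showing that \whp\ no two small vertices lie within distance $D$ in $\G$ for a suitable constant $D$: the events $\{v\in\SMALLv\}$ and $\{v'\in\SMALLv\}$ are correlated through shared attributes, so one must condition on $|\W(v)\cap\W(v')|$ (or inclusion-exclude over shared attributes), apply a Chernoff bound to the residual independent pieces, and then union bound over pairs with $\dist(v,v')\le D$ together with the at most $\djeden^D=(\ln n)^{O(1)}$ short paths joining them. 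The constant $\adistance$ has to be chosen small enough to survive this union bound yet still positive, and this is the one delicate point where the precise value promised in the statement actually gets pinned down.
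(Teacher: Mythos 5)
Your two-pronged plan (Chernoff concentration on binomial functionals of $\Bnmp$, plus first-moment counting of forbidden substructures in $\B$) matches the appendix arguments for {\bf P0}, {\bf P1}, {\bf P3}--{\bf P7} fairly closely, including the treatment of small vertices via conditioning on shared attributes, a Chernoff bound on the residual, and a union bound over short $\B$-paths (Fact~\ref{FactSmallCycles}). However, two of the eight properties fall entirely outside both prongs and are not addressed at all. {\bf P2} is a conductance bound over \emph{all} sets $\Set\subseteq\V$ with $|\Set|\le n/2$; the branching-process estimate you sketch concerns only BFS balls around a single vertex and is much weaker, and the paper in fact imports {\bf P2} from a companion article. {\bf P8} is a fine-grained concentration result for $D(k)$, $D(k,i)$ and for the count $D^\star(k,i_0)$ of well-separated vertices with $|\W'(v)|=i_0$; proving it occupies all of Section~\ref{SectionDegrees} and needs a first- and second-moment computation over parametrized covering configurations between $\N(v)$ and $\W'(v)$ in $\B$, Stirling-number identities, and the auxiliary variable $D''(k)$ to control the variance. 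Neither ``Chernoff over vertices'' nor ``first moment on short $\B$-cycles'' gives these, and they are the technically heaviest part of the lemma.

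A second issue concerns the scale in {\bf P7}. The lemma asserts separation by $\adistance\ln n/\ln\ln n$ links, not by a constant $D$; the constant $\adistance$ in the statement \emph{is} this distance constant, not the exponent in a probability bound. It is pinned down by requiring the union bound over $\B$-paths of $2t\le\adistance\ln n/\ln\ln n$ links joining two small vertices to survive: the path count contributes a factor $(nmp^2)^{t}=(\ln n)^{\Theta(t)}$, which must be beaten by the Chernoff gain $n^{-\Theta(1)}$ from conditioning on the shared attributes, and since $nmp^2\asymp\ln n$ this forces $\adistance$ to be a sufficiently small absolute constant. Your sketch captures the right mechanism, but labelling the Chernoff exponent $\adistance$ and asking only for a constant-distance separation both misidentifies what $\adistance$ is in the statement and yields a conclusion too weak for the random-walk return analysis of Section~\ref{SectionReturns-}, which needs the $\Theta(\ln n/\ln\ln n)$ separation to bound the number of returns near a small vertex.
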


\begin{proof} The proofs of the most of the properties listed below are either known (for example {\bf P0}) or standard. 
Their proofs are to be found in Appendix.
Property {\bf P8} is shown in Section \ref{SectionDegrees} and 
{\bf P2}~is proved in the accompanying paper \cite{conductance}.
\end{proof}

\begin{itemize}
	\item[{\bf P0}] $\G$ is connected and has at least one odd cycle.
	\item[{\bf P1}]  $\Big||{\cal E}(\G)|-\frac{n^2mp^2}{2}\Big| \le n^{1/2}\ln n.$
	\item [{\bf P2}] 	
	$$
	\min_{\Set,|\Set|\le n/2}\frac{e(\Set,\bar{\Set})}{2e(\Set,\Set) + e(\Set,\bar{\Set})}>\frac{1}{50},
	$$
	where,  for any $\Set\subseteq\V$,  $e(\Set,\Set)$ is the number of edges induced by set $\Set$ and $e(\Set,\bar{\Set})$ is the number of edges between $\Set$ and $\V\setminus\Set$.
	\item [{\bf P3}] 
	For all $v\in\V$  we have
	$|\W'(v)|\le \Delta$ and $\deg(v)\le \Delta$.
	\item [{\bf P4}] Each adjacent pair  $v,v'\in \V$ shares at most $\max \{2np; 4\}\frac{\ln n}{\ln\ln n}$  common neighbors.
	
\item [{\bf P5}] Every $v\in \V$  has at 
least $|\W'(u)|-1$
neighbors in $\G$. Every  $v\in \LARGEv$ has at least 
$(\ln n)/11$ neighbors in $\G$.

\item [{\bf P6}] For every $v$ and 
$1\le i\le \adistance \ln n/\ln\ln n$ each  vertex 
from $\N_i(v)$ has at most two neighbors in 
$\N_{i-1}(v)$.
	
	\item [{\bf P7}] Any two small vertexes are at least 
$\adistance \ln n/\ln\ln n$ links apart. 
Each small vertex and each 
$\B$--cycle of length at most 
$\adistance \ln n/\ln\ln n$ are at least 
$\adistance \ln n/\ln\ln n$ links apart.
Any two $\B$--cycles of length at most 
$\adistance \ln n/\ln\ln n$ are at least $\adistance \ln n/\ln\ln n$ links apart.

    \item [{\bf P8}] Introduce the numbers
\begin{equation}\label{EqDefDkiDk}
\EDki
=
\frac{{k\brace i}}{k!}
(mpe^{-np})^i
(np)^k
n^{1-c},
\qquad
\EDk
=
\sum_{i=1}^{k}
\EDki.
\end{equation}
Here $c$ is from (\ref{c}) and 
${k\brace i}$ denotes  
    Stirling's number of the second kind. We remark that $\EDk$,~$\EDki$ are approximations to $\E \Dk$, 
    $\E \Dki $, see (\ref{EqEDkiDk}) below. 
Define  
	\begin{align*}
	K_1&=\{1\le k\le 20:  \EDk \le \ln\ln n\};\\
	K_2&=\{21\le k\le \Delta :  \EDk \le (\ln n)^2\};\\
	K_3&=\{1,2,\ldots,\Delta\}\setminus ( K_1\cup K_2).
	\end{align*}
	\\
	{\bf P8a} 
For $k\in K_1$ we have $D(k)\le (\ln\ln n)^2$, 
for $k\in K_2$ we have $D(k)\le (\ln n)^4$, and	
for $k\in K_3$ we have 
$\frac{1}{2}\EDk\le \Dk\le \frac{3}{2}\EDk$.
	
	{\bf P8b} If $(c-1) \ge \ln^{-1/3} n$ then $\Dk=0$ for all  $k\le \ln^{1/2} n$.
	\\
	{\bf P8c} 
Define
	\begin{align*}
&
i_0
=
\lceil
(c-1)\ln n
\rceil,
\qquad
k_0
=
\lceil 
i_0 \cdot \max\{10\,e^{np}(e^{np}-1),2\}
\rceil,
\\
	&I=\{k:i_0\le k\le k_0\text{ and } \EDkiO \ge  i_0^{2}\}.
	\end{align*}
	Let $\Dstar$ be the number of vertexes $v\in \V$ such that $|\W'(v)|=i_0$, $\deg(v)=k$, and $v$ is at distance at least $\ln n/(\ln\ln n)^3$ from any other vertex $v'\in \V$ with $|\W'(v')|=i_0$.
	We have $I\not=\emptyset$ and 
	$\Dstar\ge \EDkiO/2$ for  $k\in I$.
\end{itemize}

We call an instance $\G$ of $\Gnmp$ typical if it has properties {\bf P0-P8}.

\section{Vertex degrees in $\Gnmp$}\label{SectionDegrees}

In this section we study vertex degrees in $\Gnmp$
and prove that {\whp} $\Gnmp$ has property {\bf P8}. 
We consider vertexes $v\in \V$ with degrees at most
$\Delta$
and  with
$|\W'(v)|\le \Delta$. Note that $\Delta\le c'\ln n$ for some constant $c'>0$, see (\ref{2019-02-25+1}).

In the proof of {\bf P8} we will use the first moment and the second moment method. Therefore we will need to establish the expected value and the variance of the number of vertexes with degree $k$.
First we will prove that
uniformly in $i\le k\le \Delta$ 
\begin{equation}\label{EqEDkiDk}
\E \Dki 
=
\EDki
\bigl(1+O(n^{-1}\ln^4 n)\bigr)
\quad
\ 
\text{and}
\quad
\
\E \Dk 
=
\EDk
\bigl(1+O(n^{-1}\ln^4 n)\bigr). 
\end{equation}
We will not study directly the variance of $\Dk$. Instead we will consider an auxiliary random variable 
\begin{equation*}
D''(k) =\sum_{i=i_k}^{k}D(k,i),
\text{ where }i_k:=\min\{1;\lceil k/\ln\ln n\rceil\}\text{ for } k=1,2,\dots, \Delta.  
\end{equation*}
(Note  that in a connected $\G$
for each vertex $v$ of degree $k\in [1,\ln\ln n]$
we have  $|\W'(v)|\ge 1=i_k$.) And then we will prove that  
 \begin{eqnarray}
 \label{EqDbisVariance}
 \Var (D''(k))
 &
 =
 &
 \E (D''(k)(D''(k)-1))
 +
 \E D''(k)
 -
 (\E D''(k))^2
 \\
 \nonumber
 &
 =
 &
 (\E D''(k))^2O(n^{-0.9})
 +O(n^{-1}\ln^3n)
 +
 \E D''(k).
 \end{eqnarray}
We might consider $D''(k)$ instead of $\Dk$ as
\begin{equation}
\label{2019-04-07+++}
\Pr\bigl\{ \forall v\in \V \
{\text{we  have}}
\ 1\le \deg(v)\le \Delta
\
{\text{and}}
\ 
i_{\deg(v)}\le |\W'(v)|\le \deg(v)\bigr\}
=1-o(1)
\end{equation}
which we will prove as well.

After establishing the first and the second moment of $\Dk$ and $D''(k)$, resp., we will proceed with the proof of the fact that $\Gnmp$ {\whp} has property {\bf P8}.
 
In the proofs we  
use in several places the following relations for $1\le t<i\le k$ and $1\le h\le k-i+1$
\begin{equation}\label{2019-04-01}
\binom{k}{i}
\frac{i^{k-i}}{2}
\ge 
{k\brace i}
\ge 
{k\brace t}
k^{2(t-i)}, 
\qquad
{k\brace i}
\ge 
i^{h-1}
{k-h+1\brace i}
\ge 
i^{h-1}
{k-h\brace i-1}.
\end{equation}
The first inequality is shown in \cite{RennieDobson}.
The second one  is equivalent to 
${k\brace j}/{k\brace j-1}\ge k^{-2}$, $j\ge 1$, which
follows from the fact that $j\to {k\brace j}/{k\brace j-1}$ decreases, see 
\cite{CanfieldPomerance}, combined with  
${k\brace k}/{k\brace k-1}={\binom{k}{2}}^{-1}$. The third and fourth inequalities follow by multiple application of
the recursion relation
${n+1\brace r}=r{n\brace r}+{n\brace r-1}$.  
 
\subsection{Configurations in $\Bnmp$ and their probabilities}

Note that $v\in\V$ has degree $k$ in $\G$ if in $\B$ by which it is defined there are sets $\N(v)\subset \V$ and $\W'(v)\subset \W$ such that: all attributes from $\W'(v)$ are linked to $v$, each attribute from $\W'(v)$ is linked to some vertex in $\N(v)$ and is not linked to any vertex from $\V\setminus (\N(v)\cup \{v\})$, each vertex from $\N(v)$ is linked to some attribute in $\W'(v)$. In order to prove \eqref{EqEDkiDk} we need to count probabilities of such configurations in $\Bnmp$.

\noindent

For the purpose of establishing these probabilities we introduce some notation.  
Notation introduced in Section~\ref{SectionDegrees} do not extend to other sections.
By $A_i, A_i'$ and $B_k, B_k'$ we denote subsets of
$\W$ and  $\V$ of sizes $i$ and $k$ respectively. 
In what follows it is convenient to think of $B_k$ and $A_i$ as realised neighborhoods ${\cal N}(v)=B_k$ and
$\W'(v)=A_i$ of some $v\in \V$ 
($B_k'$ and $A_j$ refer to respective neighborhoods of another vertex $u\in \V$).
We say that  $A_i$ covers
$B_k$ if each node from $B_k$ is linked to some vertex 
from $A_i$ in ${\cal B}(n,m,p)$. 
For $i\le k$ we call $A_i$ a cover of $B_k$ if $A_i$ covers $B_k$ and no proper subset of $A_i$ covers $B_k$  (note that $A_i$ may cover $B_k$ not being a cover of $B_k$). A cover $A_i$ is an economic cover (e-cover) if there are exactly $k$ links between $A_i$ and $B_k$. The probability 
that $A_i$ is an e-cover of $B_k$ 
is
\begin{equation}
\label{2019-03-28++1}
(1-p)^{ik-k}
{\bar p}_{k,i},
\qquad
{\text{where}}
\qquad
{\bar p}_{k,i}={k\brace i} i!p^k.
\end{equation}
For $A_t\subset A_i$ consider a configuration of links 
between $A_i$ and $B_k$ such that $A_t$ is an e-cover 
of 
 $B_k$ and each node belonging to $A_i\setminus A_t$ is 
linked to a single vertex from $B_k$. We call such a 
configuration basic ($A_i/B_k$ basic configuration). 
Let ${\cal A}_{A_i,B_k}=\{
{\cal B}(n,m,p)$ contains  an $A_i/B_k$ basic configuration  as a subgraph$\}$ and denote by 
$p^*_{k,i}=\Pr\{{\cal A}_{A_i,B_k}\}$ its probability. For $i\le k$ we have
\begin{equation}
\label{2019-04-15+1}
(1-p)^{ik-k}
{\bar p}_{k,i}
\le
p^*_{k,i}
\le
{\bar p}_{k,i}+\delta_{k,i},
\qquad
{\text{where}}
\qquad
\delta_{k,i}
=
\sum_{t=1}^{i-1}{\binom{i}{t}}{k\brace t}t!p^k
(kp)^{i-t}.
\end{equation}
The first inequality is obvious and the second one follows  by the union bound:
 $\binom{i}{t}$ counts e-covers  
$A_t\subset A_i$ of 
size $t$, ${k \brace t}t!$ counts  configurations of 
$k$ links between $A_t$ and $B_k$ that realize e-cover 
$A_t$.
Furthermore, $k^{i-t}$ upper bounds the number of ways to link members of $A_i\setminus A_t$ to arbitrary vertexes of $B_k$.
Note that 
$\delta_{k,i}$
is negligible compared to ${\bar p}_{k,i}$. 
By (\ref{2019-04-01}),
\begin{equation}\label{2019-03-30}
\delta_{k,i}
\le 
{\bar p}_{k,i}
\sum_{t=1}^{i-1}{\binom{i}{t}}(k^3p)^{i-t}
\le 
{\bar p}_{k,i}\bigl((1+k^3p)^i-1\bigr)
\le 
c'' {\bar p}_{k,i}ik^3p.
\end{equation}
Hence, we have  uniformly in $1\le i\le k\le \Delta$ that
\begin{equation}\label{2019-03-28++5}
p^*_{k,i}
=
{\bar p}_{k,i}(1+O(n^{-1}\ln^{4}n)).
\end{equation}
For $v\in V\setminus B_k$ define the event
${\cal A}_{v, A_i,B_k}= \bigl\{$each node from $A_i$ is linked to $v$,  there are no 
links 
between $A_i$ and $\V\setminus (B_k\cup \{v\})$, and
 none 
element of $\W\setminus A_i$ belongs to $\W'(v)
\bigr\}$. Its probability 
\begin{eqnarray}\label{2019-03-28++9}
p'_{k,i}
:=
\Pr\{{\cal A}_{v,A_i,B_k}\}
&
=
&
p^i(1-p)^{i(n-k-1)}
(1-p+p(1-p)^{n-1})^{m-i}
\\
\nonumber
&
=
&
p^ie^{-inp}e^{-d_0}(1+O(n^{-1}\ln^2n)).
\end{eqnarray}

Similarly, for  $B_k, B_k'$, $A_i, A_j$ 
and $u\not=v$ such that  
$(B_k\cup B_k')\cap \{u,v\}=\emptyset$
and 
$A_i\cap A_j=\emptyset$
the probability 
that events
${\cal A}_{v, A_i,B_k}$ and ${\cal A}_{u, A_j,B_k'}$ occur simultaneously 
\begin{displaymath}
p'_{k,i,j}(0)
:=
\Pr\{{\cal A}_{v, A_i,B_k}\cap{\cal A}_{u, A_j,B_k'}\}
=
p^{i+j}(1-p)^{(i+j)(n-k-1)}
\bigl((1-p)^2+2p(1-p)^{n-1}\bigr)^{m-i-j}.
\end{displaymath}
Note that $B_k$ and $B_k'$ 
 may intersect.
Here $((1-p)^2+2p(1-p)^{n-1})^{m-i-j}$ is the probability that none element from 
$\W\setminus (A_i\cup A_j)$ belong to 
$\W'(v)\cup\W'(u)$.
Furthermore, for
$|A_i\cap A_j|=r\in\{1,2\}$ and $v\in B'_k$, $u\in B_k$
the probability 
$\Pr\{
{\cal A}_{v, A_i,B_k}\cap{\cal A}_{u, A_j,B_k'}
\}$ is at most
\begin{displaymath}
p'_{k,i,j}(r)
:=
p^{i+j}(1-p)^{(i+j-2r)(n-k-1)}
\bigl((1-p)^2+2p(1-p)^{n-1})^{m-i-j+r}.
\end{displaymath}
We remark that $p'_{k,i,j}(r)\le c'p'_{k,i,j}(0)$, for $r=1,2$ and
\begin{equation}\label{2019-04-16++1}
p'_{k,i,j}(0)=
p^{i+j}e^{-(i+j)np}e^{-2d_0}(1+O(n^{-1}\ln^2n))
=
p'_{k,i}p'_{k,j}(1+O(n^{-1}\ln^2n)).
\end{equation}

\subsection{Proof of \eqref{EqEDkiDk}}

\smallskip

We start with proving the first part of (\ref{EqEDkiDk}).
Let $v\in \V$ and $1\le i\le k$. Given $A_i$, 
$B_k\subset\V\setminus \{v\}$,
 we have
$\Pr\{\W'(v)=A_i,\, \N(v)=B_k \}=p^*_{k,i}p'_{k,i}$. By the union rule,
\begin{eqnarray}\label{2019-03-28++9x1}
p_{k,i}
&
:=
&
\Pr\{\W'(v)|=i,\, |\N(v)|=k\}
=
{\binom{n-1}{k}}{\binom{m}{i}}
p^*_{k,i}p'_{k,i}
\\
\nonumber
&
=
&
{k\brace i}
\frac{(np)^k}{k!}
\bigl(mpe^{-np}\bigr)^i
e^{-d_0}
\bigl(1+O(n^{-1}\ln^{4}n)\bigr).
\end{eqnarray}
In the last step we invoked  (\ref{2019-03-28++1}), 
(\ref{2019-03-28++5}), (\ref{2019-03-28++9})
and used the approximations
\begin{eqnarray}
\label{2019-04-02}
&&
\binom{n-1}{k}
=
\frac{n^k}{k!}
\bigl(1+O(k^2/n)\bigr),
\qquad
\binom{m}{i}
=
\frac{m^{i}}{i!}
\bigl(1+O(i^2/m)\bigr),
\end{eqnarray}
Finally, 
using (\ref{2019-03-28++9x1})
 we evaluate the expectation
\begin{equation}\nonumber
\E \Dki
= 
np_{k,i}
=
\EDki
\bigl(1+O(n^{-1}\ln^4 n)\bigr).
\end{equation}

\noindent
Now we show the second part of (\ref{EqEDkiDk}).
We  split
\begin{equation}\label{2019-04-06}
\E D(k)=\sum_{1\le i\le k}\E D(k,i)+R, 
\qquad
R=\sum_{r\ge 1}\E D(k,k+r)
\end{equation}
 and we prove that $R=\E D(k,k)O(n^{-1}\ln^2 n)$.  
Given $r\ge 1$, $v\in \V$ and
$B_k\subset \V\setminus \{v\}$,  any 
instance $\B$ 
favouring the event $|\W'(v)|=k+r$, $\N(v)=B_k$ contains
an $A_k/B_k$ basic 
configuration for some
$A_k$. 
In addition, there is $A_r'\subset\W\setminus A_k$  such 
that $\W'(v)=A_k\cup A_r'$.
Hence, the
probability 
$p^*_k=\Pr\{|\W'(v)|>k, \, \N(v)=B_k\}$ is at most 
\begin{equation}\label{2019-03-21}
\binom{m}{k}\sum_{r\ge 1}\binom{m-k}{r}
p^*_{k,k}p'_{k,k+r}(pk)^r.
\end{equation}
Here $\binom{m}{k}\binom{m-k}{r}$ counts pairs 
$A_k,\, A_r'$ and $(pk)^r$ upper bounds the probability that each node from $A_r'$ is linked to some vertex from $B_k$.
Using (\ref{2019-03-21}) we bound
\begin{equation}\label{2019-03-21+1}
R=
n
{\binom{n-1}{k}}
p^*_k
\le
n
{\binom{n-1}{k}}
{\binom{m}{k}}
p^*_{k,k}
p'_{k,k}
R',
\end{equation}
where
\begin{equation}\label{2019-03-21+2}
R'
\le 
\sum_{r\ge 1}\binom{m-k}{r}\frac{p'_{k,k+r}}{p'_{k,k}}(pk)^r
\le
\sum_{r\ge 1}(mp^2k)^r
=
O(n^{-1}\ln^2 n).
\end{equation}
Here we used $p'_{k,k+r}\le p^rp'_{k,k}$.
From the first part of  
(\ref{EqEDkiDk}),(\ref{2019-03-28++9x1}),
(\ref{2019-03-21+1}),
(\ref{2019-03-21+2})
we obtain $R=O(n^{-1}\ln^{2}n)\E D(k,k)$. This combined with \eqref{2019-04-06} and the first part of \eqref{EqEDkiDk} imply the second part of \eqref{EqEDkiDk}.

\subsection{Proof of \eqref{2019-04-07+++}}

Let
 \begin{equation}\nonumber
 \quad
 \
 X=\sum_{\ln\ln n< k\le \Delta}
 \
 \sum_{i=1}^{i_{k}-1}D(k,i),
 \quad
 \
 Y=\sum_{k=1}^{\Delta}\sum_{r\ge 1}D(k,k+r).
 \end{equation}
We will prove that for some $\varepsilon>0$ 
(depending on the sequence $np=\Theta(1)$) 
we have
\begin{equation}\label{2019-04-07}
\E Y
=O(n^{-\varepsilon})
\qquad
{\text{and}}
\qquad
\E X=O(\ln^{-10}n).
\end{equation}

\noindent
Now we prove the first part of (\ref{2019-04-07}).
(\ref{2019-04-06}) 
combined with the first relation of 
(\ref{EqEDkiDk}) imply
$\E Y
=
O(n^{-1}\ln^{2}n)
\sum_{1\le k\le\Delta}{\bar D}(k,k)$.
Here
\begin{displaymath}
\sum_{1\le k\le\Delta}{\bar D}(k,k)
=
n^{1-c}\sum_{k\ge 1}(nmp^2e^{-np})^k/k!
\le
n^{1-c}e^{nmp^2e^{-np}}.
\end{displaymath}
Invoking $mp(1-e^{-np})(1+O(n^{-1}))=c\ln n$, see 
(\ref{c}), we write the right side in the form
$n^{1-c}e^{np(e^{np}-1)^{-1}c\ln n}(1+O(n^{-1}\ln n))$.
For $np=\Theta (1)$ this quantity is 
$O(n^{1-\varepsilon})$ for some 
$\varepsilon>0$, since the ratio $np/(e^{np}-1)<1$ is bounded away from $1$.
Hence $\E Y=O(n^{-\varepsilon}\ln^2n$). 

\noindent
Now we prove the second bound of (\ref{2019-04-07}). 
 By 
 (\ref{EqEDkiDk}),
\begin{displaymath}
\E D(k,i)
\le
2\EDki
\le
\frac{(ke)^ii^{k-2i}}{k!}
(mpe^{-np})^i(np)^{k}n^{1-c}=:f(k,i).
\end{displaymath}
Here we used $2{k\brace i}\le \binom{k}{i}i^{k-i}$, 
see
(\ref{2019-04-01}), 
and $\binom{k}{i}\le (ke/i)^i$. 
The inequality $f(k,i+1)/f(k,i)\ge 1$ implies that 
$i\to  f(k,i)$ 
increases for $1\le i<i_k$.
Hence,
\begin{equation}
\label{2019-03-23-2}
\sum_{1\le i\le i_k-1}\E \Dki
<
i_kf(k,i_k).
\end{equation}
Note that our assumptions 
$np=\Theta(1)$ and (\ref{c}) 
 imply $np\le c_1$ and $mpe^{-np}\le c_2\ln n$ for some $c_1,c_2>0$. Using these inequalities we estimate, 
for $k\le \Delta\le c'\ln n$,
\begin{eqnarray}\nonumber
\ln\bigl(i_kf(k,i_k)\bigr)
&
\le
&
i_k+i_k\ln k+(k-2i_k+1)\ln i_k+i_k\ln(c_2\ln n)+k\ln c_1-\ln k!
\\
\nonumber
&
=
&
-\ln k!+k\ln i_k+O(k)=-k\ln\ln\ln n+O(k).
\end{eqnarray}
Combining the latter bound with  (\ref{2019-03-23-2})
 we obtain
\begin{equation}\nonumber
\E X
\le 
\sum_{k> \ln\ln n}
i_kf(k,i_k)
\le
\sum_{k> \ln\ln n}e^{-0.5k\ln\ln\ln n}
=
o(\ln^{-10}n).
\end{equation} 
Finally, we observe that  {\bf P0}, {\bf P3} and 
(\ref{2019-04-07}) imply \eqref{2019-04-07+++}.

\subsection{Proof of \eqref{EqDbisVariance}}

Here we  upper bound the variance $\Var D''(k)$.
Given $\{u,v\}\subset \V$, let $p''_{-}$ be the probability that $|\W'(u)\cap W'(v)|\ge 3$ and  
$p_{+}''$ be the probability that $\deg(u)=\deg(v)=k$,
$|\W'(u)\cap W'(v)|\le 2$ and
$i_k\le |\W'(u)|\le k, i_k\le |\W'(v)|\le k$. 
We have  
\begin{equation}\label{2019-04-04+1}
n(n-1)p''_{+}
\le
\E 
\Bigl( D''(k)(D''(k)-1)
\Bigr)
\le n
(n-1)(p''_{+}+p''_{-}),
\end{equation}
where
$p''_{-}\le \binom{m}{3}p^6\le c'n^{-3}\ln^3n$ is negligible.
Let us evaluate $p''_{+}$. We split 
\begin{equation}\label{2019-04-04}
p''_{+}
=
\sum_{0\le r\le 2}\sum_{i_k\le i,j\le k}p_{k,i,j}(r),
\end{equation}
where 
$p_{k,i,j}(r)$ stands for the probability of the event
\begin{equation}\label{2019-04-15}
\{
|\W'(u)|=j, 
|\W'(v)|=i,
|\W'(u)\cap \W'(v)|=r,
\deg(u)=\deg(v)=k\bigr\}.
\end{equation}
We show below that 
uniformly in $1\le i_k\le i,j\le k\le \Delta$
\begin{equation}\label{2019-04-03+7}
p_{k,i,j}(0)=p_{k,i}p_{k,j}
\Bigl(
1+O\Bigl(\frac{\ln n}{n}\Bigr)\Bigr),
\qquad
p_{k,i,j}(r)
\le O(n^{-r+0.1})p_{k,i}p_{k,j},
\quad
r=1,2.
\end{equation}
From (\ref{EqEDkiDk}), (\ref{2019-04-04+1}), (\ref{2019-04-04}) and (\ref{2019-04-03+7}) we obtain \eqref{EqDbisVariance}
\begin{equation}\nonumber
\E (D''(k)(D''(k)-1))=(\E D''(k))^2(1+O(n^{-0.9}))
+
O(n^{-1}\ln^3 n).
\end{equation}
We are left with showing (\ref{2019-04-03+7}). 
Let $r=0$. For $u\not=v$,  $B_k, B_k'$, $A_i, A_j$ such 
that  
$(B_k\cup B_k')\cap \{u,v\}=\emptyset$
and 
$A_i\cap A_j=\emptyset$ we have
\begin{displaymath}
\Pr
\bigl\{
{\cal A}_{A_i,B_k}
\cap 
{\cal A}_{A_j,B_k'}
\cap 
{\cal A}_{v, A_i,B_k}
\cap 
{\cal A}_{u,A_j,B_k'}
\bigr\}
=p^*_{k,i}p^*_{k,j}p'_{k,i,j}(0).
\end{displaymath}
Summing over $A_i,A_j$ with
$A_i\cap A_j=\emptyset$  and over (not necessarily distinct) $B_k,B_k'$
we obtain 
\begin{displaymath}
p_{k,i,j}(0)
= 
{\binom{n-1}{k}}^2
{\binom{m}{i}}
{\binom{m-i}{j}}
p^*_{k,i}p^*_{k,j}p'_{k,i,j}(0)
=
p_{k,i}p_{k,j}\bigl(1+O(n^{-1}\ln^2 n)\bigr).
\end{displaymath}
In the last step we used
(\ref{2019-04-16++1})
and (\ref{2019-03-28++9x1}).

Let $r=1$. For $1\le i,j\le k$ we split 
$p_{k,i,j}(1)=\sum_{0\le h\le k-1}p_{k,i,j}(1,h)$, 
where  $p_{k,i,j}(1,h)$ is the probability of the event (\ref{2019-04-15})
intersected with the event that $w=A_i\cap A_j$ has $h$ neighbors in $\V\setminus\{u,v\}$.
We have
\begin{eqnarray}
\label{2019-04-01+3}
p_{k,i,j}(1,h)
&
\,
\le
&
\,
m{\binom{m-1}{i-1}}{\binom{m-i}{j-1}}
\cdot
{\binom{n-2}{h}}
{\binom{n-h-2}{k-h-1}}^2
\\
\nonumber
&
&
\cdot
\,
p^h
p^*_{k-h-1,i-1}p^*_{k-h-1,j-1}
\,
p'_{k,i,j}(1).
\end{eqnarray}
The first line  counts
triplets
$\{w\}, A_{i}, A_{j}$ such that 
$\{w\}=A_i\cap A_j$ and  triplets 
$B_h,B_k,B'_k$ such that $u\in B_k\subset \V\setminus\{v\}$, $v\in B'_k\subset \V\setminus\{u\}$ and $B_h\subset B_k\cap B'_k$.
Furthermore, 
$p^h$ is the probability that $w$ is 
linked to each vertex of $B_h$,
$p^*_{k-h-1,i-1}$ is the probability that $A_i\setminus\{w\}$ covers 
$B_k\setminus(B_h\cup\{u\})$,
$p^*_{k-h-1,j-1}$ is the probability that $A_j\setminus\{w\}$ covers 
$B'_k\setminus(B_h\cup\{v\})$. For $h=k-1$ we put 
$p^*_{0,s}:=(p(k-1))^{s}$ so that
$p^*_{0,i-1}$ ($p^*_{0,j-1}$)
upper bounds the probability that 
$B_k\setminus\{u\}$ covers $A_{i}\setminus \{w\}$ 
($B'_k\setminus\{v\}$ covers $A_{j}\setminus \{w\}$).

\noindent
Now we show that
\begin{equation}\label{2019-04-01+1}
p^*_{k-h-1,i-1}
\le 
c'{k\brace i}\frac{(i-1)!}{i^h}p^{k-h-1},
\quad
0\le h<k-1,
\quad
{\text{and}}
\quad
p^*_{0,i}
\le 
{k\brace i}i^{i-k}(pk)^{i-1}.
\end{equation}
The second inequality follows from ${k\brace i}i^{i-k}\ge {i\brace i}=1$.
To prove the first one we use the bound, cf. (\ref{2019-04-15+1}),
\begin{equation}
\label{2019-04-17}
p^*_{l,j}
\le
\sum_{t=1}^{j\wedge l}
{\binom {j}{t}}
{l\brace t}t!p^{l}(pl)^{j-t},
\qquad
j,l\ge 1.
\end{equation} 
\noindent
Let $h<k-1$. Let moreover $\tau=(i-1)\wedge(k-h-1)$. 
For $\tau=i-1$ we obtain from (\ref{2019-04-15+1}),
(\ref{2019-03-30}) that
\begin{displaymath}
p^*_{k-h-1,i-1}
=
{\bar p}_{k-h-1,i-1}(1+o(1))
\le 
c'{k-h-1\brace i-1}(i-1)!p^{k-h-1}.
\end{displaymath}
Now the inequalities 
${k\brace i}
\ge 
i^{h}{k-h\brace i}
\ge 
i^h{k-h-1\brace i-1}$, 
see (\ref{2019-04-01}), imply
(\ref{2019-04-01+1}). For $\tau=k-h-1$ we 
apply (\ref{2019-04-17}) and invoke 
${\tau\brace t}\le t^{2(\tau-t)}{\tau\brace\tau}=t^{2(\tau-t)}$, see  (\ref{2019-04-01}). 
We obtain
\begin{displaymath}
p^*_{k-h-1,i-1}
\le
p^{\tau}
\sum_{t=1}^{\tau}
{\binom{i-1}{t}}
t!\tau^{2(\tau-t)}(p\tau)^{i-t-1}.
\end{displaymath}
Then using $p\tau^3=o(1)$ 
we upper bound the right side by
\begin{displaymath}
p^{\tau}
(p\tau)^{i-\tau-1}
(i-1)!
\sum_{t=1}^\tau(p\tau^3)^{\tau-t}
\le 
c'p^{\tau}
(p\tau)^{i-\tau-1}
(i-1)!.
\end{displaymath}
Furthermore, we multiply the right side by
${k\brace i}i^{i-k}\ge1$, see (\ref{2019-04-01}), and 
use $p\tau i\le 1$ to get
(\ref{2019-04-01+1}).
Proof of (\ref{2019-04-01+1}) is complete.

\noindent
In the next step we invoke (\ref{2019-04-01+1}) in 
(\ref{2019-04-01+3})
and apply  (\ref{2019-04-02}). We obtain 

\begin{eqnarray}
\label{2019-04-02+1}
&&
p_{k,i,j}(1,h)
\le
c'm^{i+j-1}
\frac
{(np)^{2k}}
{(k!)^2}
{k\brace i}{k\brace j}
p'_{k,i,j}(1)
S^*_{k,i,j}(h)
\le c'' p_{k,i}p_{k,j}
S^*_{k,i,j}(h),
\end{eqnarray}
where
\begin{displaymath}
S^*_{k,i,j}(h)
:=
\frac{((k)_{h+1})^2}
{h!(np)^{h+2}(ij)^{h}},
\quad
h<k-1,
\quad
{\text{and}}
\quad
S^*_{k,i,j}(k-1)
:=
\frac{k!k}{(np)^{k+1}}
\frac{(pk)^{i+j-2}i^{i-k}j^{j-k}}{(i-1)!(j-1)!}.
\end{displaymath}
In the last step of (\ref{2019-04-02+1}) we used
$p'_{k,i,j}(1)
\le 
c' p'_{k,i,j}(0)
\le 
c''p'_{k,i}p'_{k,j}$, see (\ref{2019-04-16++1}), and 
(\ref{2019-03-28++9x1}). 
A calculation shows that
$\sum_{0\le h\le k-1}S^*_{k,i,j}(h)=O(n^{0.1})$ uniformly in $i_k\le i,j\le k\le c'\ln n$.
Furthermore, we have 
$p'_{k,i,j}(1)\le c' p'_{k,i,j}(0)
\le c''p'_{k,i}p'_{k,j}$, see (\ref{2019-04-16++1}). 
Hence,
(\ref{2019-04-02+1}) imply 
(\ref{2019-04-03+7}) for $r=1$. For $r=2$ the 
proof of (\ref{2019-04-03+7}) is much the 
same.
 
\subsection{Proof of {\bf P8}}

 Now we prove {\bf P8a} and {\bf P8b}. We sketch only the proof of {\bf P8c}. In view of 
 (\ref{2019-04-07+++}) it suffices to show that 
 {\bf P8} holds for $D''(k)$.
 
 \noindent
 {\it Proof of}\, {\bf P8a}. 
 The second part of {\bf P8a} follows from 
 %
 (\ref{EqEDkiDk}) by Markov's inequality.
 To prove the first part 
 we show that
 \begin{displaymath}
 1
 -
 \Pr
 \Bigl\{
 \frac{1}{2}{\bar D}(k)\le D''(k)\le \frac{3}{2}{\bar D}(k),\,
 k\in K_3
 \Bigr\}
 \le
 \Pr\{\cup_{k\in K_3}{\cal B}_k\}
 \le
 \sum_{k\in K_3}
 \Pr\{{\cal B}_k\}=o(1).
 \end{displaymath}
 Here we write for short
 ${\cal B}_k=\{|D''(k)-{\bar D}(k)|>{\bar D}(k)/2\}$.
 The first two inequalities are obvious. To prove the 
 last bound we show that
 $\Pr\{{\cal B}_k\}\le O(n^{-0.9})+2/{\bar D}(k)$.
 From
 (\ref{EqEDkiDk}),
 (\ref{2019-04-07})
 we obtain
 \begin{displaymath}
 |{\bar D}(k)-\E D''(k)|\le |{\bar D}(k)-\E D(k)|+\E X+\E Y =O(n^{-1}\ln^2n){\bar D}(k)+O(\ln^{-9}n).
 \end{displaymath} 
 For $k\in K_3$ 
 we have
 $0.9\le \E D''(k)/{\bar D}(k)\le 1.1$. 
 Now, by Chebyshev's inequality and 
 (\ref{EqDbisVariance}), 
 \begin{displaymath}
 \Pr\{{\cal B}_k\}
 \le 
 \Pr\{|D''_k-\E D''(k)|\ge {\bar D}(k)/3\}
 \le 
 O(n^{-0.9})+1.1/{\bar D}(k).
 \end{displaymath}
 
 \noindent
 {\it Proof of}\,  {\bf P8b}. We need to show  that 
 $\Pr\{\deg(v)>\ln^{1/2}n,\, \forall v\in \V\}=1-o(1)$.
 In view of {\bf P5} it suffices to prove 
 that
 $p_0
 :=
 \Pr\{\exists v\in \V: \,|\W'(v)|< 2\ln^{1/2}n\}=o(1)$.
 Note that each $|\W'(v)|$ has binomial distribution with mean
 $d_0=c\ln n$,  see (\ref{c}). By the union bound and 
 Chernoff's inequality, see (2.6) in \cite{JansonLuczakRucinski2001}, we have
 \begin{eqnarray}
 \nonumber
 p_0
 \le 
 n\Pr
 \Bigl\{
 |\W'(v)|\le \frac{2d_0}{\sqrt{\ln n}}
 \Bigr\}
 \le
 n
 \exp
 \Bigl\{
 -\Bigl(
 1
 -
 \frac{2}{\sqrt{\ln n}}
 +\frac{2}{\sqrt{\ln n}}
 \ln\frac{2}{\sqrt{\ln n}}
 \Bigr)
 d_0
 \Bigr\}
 =
 o(1).
 \end{eqnarray}
 
 \noindent
 {\it Proof of} {\bf P8c}.
 Let us prove that $I\not=\emptyset$.  We begin with showing auxiliary inequality (\ref{2019-09-29 formules numeracija nereikalinga}), see below.
 Given $y>0$, $q>1$ and integer $i>1$, let 
 $r=\lceil i+iqy\rceil$. We have 
 \begin{displaymath}
 \sum_{k\ge r}
 {k\brace i}
 \frac{y^k}{k!}
 \le
 \frac{y^i}{i!}
 \sum_{k\ge r}
 \frac{(yi)^{k-i}}{(k-i)!}
 \le
 \frac{y^i}{i!}
 \frac{(yi)^{r-i}}{(r-i)!}\frac{q}{q-1}
 \le
 \frac{y^i}{i!}
 \frac{(e/q)^{r-i}}{\sqrt{2\pi} \sqrt{r-i}}
 \frac{q}{q-1}.
 \end{displaymath}
 In the first step we use 
 ${k\brace i}\le {\binom{k}{i}}i^{k-i}$. In the second step we upper bound the series by the geometric series $1+q^{-1}+q^{-2}+\cdots$
 using the fact that the ratio of two consecutive terms is at most $q^{-1}$.
 The last inequality follows by Stirling's approximation. Choosing $q=2e$ we upper bound the right side by $y^i/(2i!)$. Combining this bound with the  identity 
 $\sum_{k\ge i}{k\brace i}y^k/k!=(e^y-1)^i/i!$ 
 and inequality $e^y-1>y$
 we obtain
 for any $r\ge \lceil i+i2ey\rceil$
 \begin{equation}\label{2019-09-29 formules numeracija nereikalinga}
 \sum_{k=i}^{r}{k\brace i}\frac{y^k}{k!}
 \ge 
 \frac{(e^y-1)^i}{i!}
 -\frac{1}{2}\frac{y^i}{i!}
 \ge 
 \frac{1}{2}\frac{(e^y-1)^i}{i!}.
 \end{equation}
 For 
 $i=i_0$, $r=k_0$ and $y=np$
 this inequality implies
 \begin{equation}\label{2019-04-11}
 \sum_{k=i_0}^{k_0}
 {\bar D}(k,i_0)
 \ge
 \frac{1}{2}
 \frac{(mp(1-e^{-np}))^{i_0}}{i_0!}
 n^{1-c}
 \ge
 \frac{1+O(i_0/n)}{2e\sqrt{i_0}}
 \Bigl(\frac{c\ln n}{i_0}\Bigr)^{i_0}.
 \end{equation} 
 In the second step we used (\ref{c}) and Stirling's approximation. Furthermore, by the assumption 
 $(c-1)\ln n\to +\infty$, we have for large $n$ that
 $c/(c-1)\ge (c\ln n)/i_0>2c/(2c-1)$. We conclude that the right side of (\ref{2019-04-11}) grows exponentially in $i_0$. This proves $I\not=\emptyset$.
 
Let $D^{\star}(k)=D(k,i_0)-D^{\star}(k,i_0)$.
Using similar techniques to those used already in this section we may prove that  
 \begin{eqnarray}
 \label{2019-04-19}
 p_1
 &
 :=
 &
 \Pr\{D(k,i_0)\ge 0.8{\bar D}(k,i_0),\,
 \forall k\in I\}=1-o(1),
 \\
 \label{2019-04-19+1}
 p_2
 &
 :=
 &
 \Pr
 \bigl\{
 D^{\star}(k)
 <
 0.3{\bar D}(k,i_0),
 \forall k\in I
 \bigr\}
 =1-o(1).
 \end{eqnarray} 
(\ref{2019-04-19}) and (\ref{2019-04-19+1}) imply {\bf P8c}.
 
It remains to prove (\ref{2019-04-19}) and (\ref{2019-04-19+1}).

Proof of (\ref{2019-04-19}). By (\ref{EqEDkiDk}) we have
$i_0^2\le{\bar D}(k,i_0)\le 1.1 \E D(k,i_0)$, 
$k\in I$.
Combining the union bound and Chebychev's inequality we obtain
\begin{eqnarray}
	1-p_1
	\le
	\sum_{k\in I}
	\Pr\bigl\{D(k,i_0)< 0.88\,\E D(k,i_0)\bigr\}
	\le
	\sum_{k\in I}70\frac{\Var(D(k,i_0))}{(\E D(k,i_0))^2}
	=o(1).
\end{eqnarray}
In the last step we used 
$\E D(k,i_0)\ge i_0^2/1.1$ and invoked the  approximation
\begin{displaymath}
	\Var(D(k,i_0))
	=
	\bigl(\E D(k,i_0)\bigr)^2O(n^{-0.9})+O(n^{-1}\ln^3n))
	+
	\E D(k,i_0),
\end{displaymath}
which is shown using the same argument as in 
(\ref{2019-04-04+1}), (\ref{2019-04-03+7}), (\ref{EqDbisVariance}) above.

Proof of (\ref{2019-04-19+1}). 
We show below that
$\E D^{\star}(k)\le c'{\bar D}(k,i_0)\ln^{-3}n$. Then combining the union bound and Markov's inequality we obtain
\begin{eqnarray}
	1-p_2
	\le
	\sum_{k\in I}
	\Pr
	\bigl\{
	D^{\star}(k)
	\ge
	0.3{\bar D}(k,i_0)
	\bigr\}
	\le c''|I|\ln^{-3}n=o(1).
\end{eqnarray}
Given $k$ we upper bound $\E D^{\star}(k)$ by the expected number of vertex pairs $v\not=u$ such that $|\W'(v)|=|\W'(u)|=i_0$,
$\deg(v)=k$ and $\dist(u,v)\le \ln/(\ln\ln n)^3$.  The pairs with
different intersection sizes  
$|\W'(v)\cap \W'(u)|=r$ will be counted separately.

For $r=1$ the expected number of pairs is upper bounded by
\begin{eqnarray}
	\label{2019-04-19+3}
	&&\,
	n(n-1)
	\cdot
	{\binom{m}{i_0}}
	{\binom{n-2}{k-1}}
	{\binom{m-i_0}{i_0-1}}
	\cdot
	p^{i_0}(1-p)^{i_0(n-k-1)}p^*_{k,i_0}
	\\
	\nonumber
	&&
	\cdot
	\,
	\bigl(p(1-p)(1-(1-p)^{n-2})^{i_0-1}
	\cdot
	\bigl(
	(1-p)^2+2p(1-p)^{n-1}
	\bigr)^{m-2i_0+1}.
\end{eqnarray}
Here $n(n-1)$  counts ordered pairs 
$v\not=u$. ${\binom{m}{i_0}}
{\binom{n-2}{k-1}}{\binom{m-i_0}{i_0-1}}$
counts non intersecting subsets $A_{i_0}$, $A_{i_0-1}\subset W$ and $B_k\subset \V\setminus \{v\}$ with $u\in B_k$ that can realise  $\W'(v)$, $\W'(u)\setminus \W'(v)$ and ${\cal N}(v)$ respectively.
Furthermore, 
$p^*_{k,i_0}=\Pr\{{\cal A}_{A_{i_0},B_k}\}$ and 
$p^{i_0}(1-p)^{i_0(n-k-1)}$ is the probability that  all elements of $A_{i_0}$ are linked to $v$ and  none to $\V\setminus (B_k\cup\{v\})$. Next, 
$\bigl(p(1-p)(1-(1-p)^{n-2})^{i_0-1}$ is the probability that each element of $A_{i_0-1}$ is linked to $u$, none to $v$ and each  has more than one neighbor in ${\cal B}$. Finally, $\bigl(p(1-p)(1-(1-p)^{n-2})^{i_0-1}$
is the probability that none element of $\W\setminus(A_{i_0}\cup A_{i_0-1})$ belongs to 
$\W'(v)\cup \W'(u)$.

\noindent
Using (\ref{2019-03-28++1}),
(\ref{2019-03-28++5}), see also
(\ref{2019-03-28++9}), we show that  
(\ref{2019-04-19+3}) is at most
${\bar D}(k,i_0)
k
\frac{d_0^{i_0-1}}{(i_0-1)!}
e^{-d_0}(1+o(1))$.
Next we bound $\frac{d_0^{i_0-1}}{(i_0-1)!}\le\frac{d_0^{i_0}}{i_0!}$ and use Stirling's approximation to $i_0!$. We have
\begin{equation}\label{2019-04-23-1}
	\frac{d_0^{i_0}}{i_0!}
	e^{-d_0}
	\le
	\Bigl(\frac{d_0}{i_0}\Bigr)^{i_0}
	e^{i_0-d_0}
	\le
	\Bigl(\frac{c}{c-1}\Bigr)^{1+(c-1)\ln n}
	e^{1-\ln n} 
	\le 
	c'
	\frac{\ln n}{n}
	\Bigl(\frac{c}{c-1}\Bigr)^{(c-1)\ln n}.
\end{equation}
In the last step we used $c/(c-1)=o(\ln n)$. Furthermore, for
$c>1$, $c=\Theta(1)$ there exists $\varepsilon>0$ such that $(c/(c-1))^{c-1}< e^{1-\varepsilon}$ uniformly in $n,m$ (because $x\to (1+x^{-1})^x$ increases for $x>0$ and approaches $e$ as 
$x\to +\infty$). Hence, the right side is bounded by $n^{-\varepsilon/2}$. 
We conclude that for $r=1$ the expected number of pairs is at most $c''{\bar D}(k,i_0)n^{-\varepsilon/3}$.

For $r=2$  we similarly upper bound  the expected number of pairs by
$c''{\bar D}(k,i_0)n^{-\varepsilon/3}$. 

For $r\ge 3$ the expected number of pairs is at most 
$n(n-1)p''_{-}
\le 
c'n^{-1}\ln^3n
\le 
c''{\bar D}(k,i_0)\ln^{-4}n$, for $k\in I$.

For $r=0$ we consider separately the pairs 
that are in the distance $\dist(u,v)=t\in \{2,3,\dots\}$. 
For $t=1$, the expected number of pairs 
is at most
\begin{eqnarray}
	\label{2019-04-23-2}
	&&\,
	n(n-1)
	{\binom{n-2}{k}}
	{\binom{m}{i_0}}
	{\binom{m-i_0}{i_0}}
	\cdot
	p^{i_0}(1-p)^{i_0(n-k-1)}p^*_{k,i_0}
	\\
	\nonumber
	&&
	\cdot
	\bigl(p(1-p)\bigr)^{i_0}
	\bigl((1-(1-p)^{n-2}\bigr)^{i_0-1}
	\bigl(
	(1-p)^2+2p(1-p)^{n-1}
	\bigr)^{m-2i_0}
	\cdot
	(mnp^2)^{t-2}ki_0p.
\end{eqnarray}
Here $n(n-1)$  counts ordered pairs 
$v\not=u$, ${\binom{n-2}{k}}$
counts  sets 
$B_k\subset \V\setminus \{u, v\}$ 
that realise  
${\cal N}(v)$, ${\binom{m}{i_0}}
{\binom{m-i_0}{i_0}}$ counts non-intersecting pairs $A_i,A'_i\subset \W$ that  realise $\W'(v)$, 
$\W'(u)$. Furthermore, $(mnp^2)^{t-2}ki_0p$ upper bounds the number of paths connecting $B_k$ with $A'_i$ and having 
$2t-1$ links. 
Proceeding as in (\ref{2019-04-19+3}), (\ref{2019-04-23-1})
we upper bound (\ref{2019-04-23-2}) by
$c''{\bar D}(k,i_0)
\frac{d_0^{d_0}}{i_0!}
e^{-d_0}\ln^tn$, 
where $\frac{d_0^{d_0}}{i_0!}e^{-d_0}
\le n^{-\varepsilon/2}$. Hence
(\ref{2019-04-23-2})
is at most $c''{\bar D}(k,i_0)n^{-\varepsilon/3}$.

\section{Probability of the first visit of a vertex}\label{SectionReturns-}

Given a connected graph $G$ on the vertex set $\V$, let $\EW_u$ be the simple random walk starting from $u\in \V$. 
Let $T>0$ and $t\ge T$. In this section we will study the probability of the event $\A_{t}(v)$ that $\Walk_u$ does not visit $v$ in steps $T, T+1, \ldots, t$. For that we need to introduce some additional notions.    

Let $P_u^{(t)}(v) = \Pr\{\Walk_u(t) = v\}$, where $\EW_u(t)$ denotes the vertex visited at time $t=0,1,2,\dots$ (so that $P_u^{(0)}(u)=1$).
Assuming  that $G$ admits a stationary distribution 
$\pi=\{\pi_v, v\in\V\}$ (i.e., $\lim_{t\to+\infty}P_u^{(t)}(v) =\pi_v$, for all $u,v\in\V$) we have
$\pi_v=\deg(v)|{\cal E}(G)|^{-1}$. Given integer $T>0$
and $v\in \V$,
let
$$
R_{T,v}(z)=\sum_{j=0}^{T-1}
{\rm Pr}\{\Walk_v(j)=v\}z^t,
\qquad
z\in{\mathbb C}.
$$

In the following lemma we consider a sequence of connected graphs $\{G_n\}$, where $n$ is the number 
of vertexes of $G_n$. We assume that each graph admits a stationary distribution $\pi=\pi(n)$. 
Furthermore, we assume that  $T=T(n)$ is such that, for $t\ge T$
\begin{equation}\label{CooperFrieze1}
\max_{u,v\in\V}\Big|P_u^{(t)}(v)-\pi_v\Big|\le n^{-3}.
\end{equation}

The following lemma was proved in \cite{CooperFrieze2008}. It is stated there as Corollary~7.
\begin{lem}\label{LemNotVisit}
Suppose that $T=T(n)$ satisfies (\ref{CooperFrieze1}) and

\begin{itemize}
	\item[(i)] 
there exist $\Theta>0$, $C_0>0$ and $n_0>0$ such that uniformly in $n>n_0$ we have
$$
	\min_{|z|\le 1+(C_0T)^{-1}} |R_{T,v}(z)|\ge \Theta,
$$
	\item[(ii)] 
$T^2\pi_v=o(1)$ and $T\pi_v=\Omega(n^{-2}).$
\end{itemize}
Then there exists
\begin{equation}\label{CooperFrieze2}
p_v=\frac{\pi_v}{R_{T,v}(1)(1+O(T\pi_v))}
\end{equation}
such that for all $t\ge T$
\begin{equation}\label{CooperFrieze3}
{\rm Pr}\{\A_t(v)\}
=
\frac{1+O(T\pi_v)}{(1+p_v)^t}+ o(e^{-t/(2C_0T)}).
\end{equation}
\end{lem}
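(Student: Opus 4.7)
The plan is to reduce this to a generating-function / first-passage calculation and then extract the geometric decay via complex analysis, very much in the spirit of renewal-theoretic arguments. First I would introduce the return sequence $r_t := P_v^{(t)}(v)$ and the first-visit probabilities $h_t := \Pr\{\EW_u \text{ hits } v \text{ for the first time at step } t\}$, together with their generating functions $R_v(z) = \sum_{t\ge 0} r_t z^t$ and $H_u(z) = \sum_{t\ge 1} h_t z^t$. The standard renewal decomposition $P_u^{(t)}(v) = \sum_{s=1}^{t} h_s\, r_{t-s}$ gives the identity $H_u(z) R_v(z) = P_u(z) - \mathbf{1}_{u=v}$, where $P_u(z) = \sum_{t\ge 0} P_u^{(t)}(v) z^t$. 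Since $\Pr\{\A_t(v)\} = 1-\sum_{s\le t} h_s$, the quantity we want is the tail of the coefficients of $H_u(z)/(1-z)$.

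Next I would exploit the mixing hypothesis (\ref{CooperFrieze1}) to truncate at $T$. For $t\ge T$ we have $P_u^{(t)}(v) = \pi_v + O(n^{-3})$, so writing $R_v(z) = R_{T,v}(z) + \rho_T(z)$ and similarly for $P_u$, the tail parts are, up to an $O(n^{-3})$-controlled perturbation, equal to $\pi_v\, z^T/(1-z)$. Substituting into the renewal identity and clearing denominators, one sees that near $z=1$
\[
1 - H_u(z) = \frac{\pi_v\, z^T}{(1-z) R_{T,v}(z)}\bigl(1 + O(T\pi_v) + O(n^{-3})\bigr),
\]
which suggests that $1-H_u(z)$ has its unique small root near $z_v = 1/(1+p_v)$ with $p_v = \pi_v/[R_{T,v}(1)(1+O(T\pi_v))]$, matching (\ref{CooperFrieze2}).

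The extraction of (\ref{CooperFrieze3}) I would then carry out by Cauchy's formula on the contour $\gamma = \{|z| = 1 + (C_0 T)^{-1}\}$:
\[
\Pr\{\A_t(v)\} = \frac{1}{2\pi i}\oint_\gamma \frac{1-H_u(z)}{1-z}\,\frac{dz}{z^{t+1}}.
\]
Hypothesis (i) keeps $|R_{T,v}(z)|$ bounded below by $\Theta$ throughout the disk $\{|z|\le 1 + (C_0T)^{-1}\}$, so the only singularity inside $\gamma$ is the simple pole at $z_v$, whose residue yields $(1+p_v)^{-t}(1+O(T\pi_v))$ after one identifies $p_v$ from the root location. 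The contour integrand on $\gamma$ itself is dominated by $(1+(C_0T)^{-1})^{-t}=O(e^{-t/(2C_0T)})$, producing the claimed error term. Condition (ii) enters to ensure that $T\pi_v = o(1)$ (so the first-order root expansion is legitimate) and that $T\pi_v \gg n^{-2}$ (so the $O(n^{-3})$ mixing error is swallowed by the $O(T\pi_v)$ multiplicative correction).

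The main technical obstacle is the uniform control of $|R_{T,v}(z)|^{-1}$ on a contour whose radius shrinks to $1$ at rate $(C_0T)^{-1}$: without the quantitative lower bound in (i), spurious near-unit zeros of $R_{T,v}$ could contribute extra poles inside $\gamma$ and destroy the clean geometric decay. This is exactly why assumption (i) is built into the statement, and verifying it (for our specific random intersection graph) will be the genuine work in the applications that follow; the above generating-function/contour argument then plugs in mechanically.
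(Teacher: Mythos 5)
The paper does not actually prove this lemma: it states explicitly that the result is Corollary~7 of Cooper and Frieze \cite{CooperFrieze2008} and cites it without reproducing the argument. So there is no ``paper's own proof'' to compare against; what you have sketched is, in essence, the Cooper--Frieze argument itself.

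Your outline is a faithful summary of that argument: renewal decomposition of $P_u^{(t)}(v)$ into first-passage times and return probabilities, generating functions truncated at the mixing time $T$ using \eqref{CooperFrieze1}, Cauchy's integral formula on the contour of radius $1+(C_0T)^{-1}$, residue at the dominant pole $z_v\approx 1/(1+p_v)$ giving the leading term, and the bound on the integrand along the contour giving the $o(e^{-t/(2C_0T)})$ remainder, with assumption~(i) guaranteeing there are no spurious zeros of $R_{T,v}$ in the disk. Two small cautions worth flagging if you were to flesh this out: first, since $\A_t(v)$ refers only to steps $T,T+1,\dots,t$, your $h_t$ should be the probability of first visit to $v$ at step $t\ge T$ (not from step $0$), which slightly changes the boundary terms in the renewal identity; second, the quantity you expand is really the generating function of $\Pr\{\A_t(v)\}$ (which Cooper--Frieze handle via a sequence satisfying an explicit linear recursion) rather than $1-H_u(z)$ directly, since $1-H_u(z)$ vanishes at $z=1$ for a finite connected graph and the factor $1/(1-z)$ must be cancelled against that zero before the pole structure near $z_v$ is visible. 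Neither issue undermines the outline; it is the same route as the cited proof.
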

 We note that the bounds $O(T\pi_v)$ and $o(e^{-t/(2C_0T)})$ in (\ref{CooperFrieze2}) and (\ref{CooperFrieze3}) hold uniformly in $u,v$ and $n>n_0$, provided that  conditions (i), (ii) hold uniformly in  $v$ and $n>n_0$.

\noindent

\subsection{The expected number of returns}

\bigskip
Let $\G$ be an instance of the random intersection 
graph $\Gnmp$ and consider the simple random walk  
$\EW_v$ on $\G$ 
starting from $v\in \V$. We concentrate on $R_{T,v}(z)$ for such random walk (in order to determine $\PraG{\A_t(v)}$) and its variant which will be helpful in establishing $\PraG{\A_t(v)\cap \A_t(v')}$. 
Let $r_i=\PraG{\EW_v(i)=v}$ be the probability that the walk returns to $v$ at time $i$ (so that
$\EW_v(0)=v$  implies $r_0=1$).  We remark that 
$z\to R_{T,v}(z)=\sum_{i=0}^{T-1}r_iz^i$ is a random function depending on the realised graph ${\cal G}$. Furthermore  
given a pair of vertexes $x,y$  of ${\cal G}$ that  
 are in a distance at least~$20$, we denote by
${\cal G}_\varkappa$ the graph obtained from ${\cal G}$ by merging $x$ and $y$. 
Here $\varkappa=\{x,y\}$ represents  the new vertex 
obtained from the merged pair. We denote by 
$\deg(\varkappa)=\deg(x)+\deg(y)$ the degree of 
$\varkappa$.
In ${\cal G}_{\varkappa}$ we consider the simple random walk $\EW_{\varkappa}$ starting from 
$\varkappa$. 
Let $R_{T,\varkappa}(z)
=
R_{T,x,y}(z)=\sum_{i=0}^{T-1}{\bar r}_iz^i$, where 
${\bar r}_i=\PraG{\EW_{\varkappa}(i)=\varkappa}$.
 Furthermore, let $\tau_v$ ($\tau_{\varkappa}$) be the time of the first return of $\EW_v$ ($\EW_{\varkappa}$) to $v$ ($\varkappa$) in the interval $[1,+\infty)$.
 In the lemma below we assume that $m,n,p$ satisfy
conditions of Theorem  \ref{ThmMain}.

\begin{lem} 
 Let $C_0>0$. Assume that $T=T(n,m)\to\infty$ and $T=o(\ln^3 n)$.
We have {\whp} 
\begin{eqnarray}\label{R_T}
&&
\sup_{|z|\le 1+(C_0T)^{-1}}
|R_{T,v}(z)|
=
1+O(\ln^{-1}n)
\quad
\forall \ v\in V,
\\
\label{R_T+}
&&
\sup_{|z|\le 1+(C_0T)^{-1}}
|R_{T,x,y}(z)|
=
1+O(\ln^{-1}n)
\quad
\forall  x,y\in V
\ \
{\rm {with}\ }
\ \  
\dist(x,y)\ge 20,
\\
\label{R_T+1}
&&
R_{T,v}(1)=1+{\bar p}_v+O(\ln^{-2}n),
\quad
\
{\text{where}}
\quad
\
{\bar p}_v=\PraG{\tau_v\le T-1}\asymp
\ln^{-1}n,
\\
\label{R_T+2}
&&
R_{T,x,y}(1)=1+{\bar p}_{\varkappa}+O(\ln^{-2}n),
\quad
{\text{where}}
\quad
{\bar p}_{\varkappa}=\PraG{\tau_{\varkappa}\le T-1}\asymp
\ln^{-1}n,
\\
\label{R_T+3}
&&
\frac{\deg(\varkappa)}{1+{\bar p}_{\varkappa}}
=
\frac{\deg(x)}{1+{\bar p}_x}
+
\frac{\deg(y)}{1+{\bar p}_y}
+
O(\deg(\varkappa)\ln^{-2}n).
\end{eqnarray}
Furthermore, (\ref{R_T}), (\ref{R_T+1}), respectively,
(\ref{R_T+}), (\ref{R_T+2}), (\ref{R_T+3}) hold uniformly in
$v\in \V$, respectively, uniformly in $x,y\in \V$  satisfying
$\dist(x,y)\ge 20$.
\end{lem}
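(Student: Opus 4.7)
The plan is to establish (\ref{R_T+1}) and (\ref{R_T+2}) via a first-return decomposition, to deduce (\ref{R_T}) and (\ref{R_T+}) by a convexity estimate on $|z|\le 1+(C_0T)^{-1}$, and finally to obtain (\ref{R_T+3}) by showing that merging two distant vertexes is essentially additive on the truncated return statistics. Throughout I work on a typical \G\ satisfying \textbf{P0}--\textbf{P8}.

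The first task is to evaluate $\bar p_v=\sum_{i=1}^{T-1}f_i$ with $f_i=\PraG{\tau_v=i}$. Because $T=o(\ln^3n)$, the walk $\EW_v$ stays in a ball of radius $O(\ln^3n)$ around $v$; by \textbf{P6} and \textbf{P7} the ball of radius $\adistance\ln n/\ln\ln n$ around $v$ contains at most one short $\B$--cycle and no small vertex other than possibly $v$, and each vertex in $\N_i(v)$ has at most two back-neighbors in $\N_{i-1}(v)$. Together with \textbf{P3} and \textbf{P5} this forces $\deg(u)\asymp\ln n$ for almost every $u$ the walk may plausibly hit within a short excursion. The dominant first-return is the two-step return
\begin{equation*}
f_2=\frac{1}{\deg(v)}\sum_{u\in\N(v)}\frac{1}{\deg(u)}\asymp\ln^{-1}n;
\end{equation*}
all other first returns go via a nearby short $\B$--cycle or one of the $|\W'(v)|\le\Delta$ cliques containing $v$, each contributing $O(\ln^{-1}n)$, and there are only $O(1)$ such short excursions available by \textbf{P7}. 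Hence $\bar p_v\asymp\ln^{-1}n$ uniformly in $v$. The strong Markov property applied at each return then gives
\begin{equation*}
R_{T,v}(1)-1=\sum_{k\ge1}\PraG{\text{at least $k$ returns in }[1,T-1]}\le\sum_{k\ge1}\bar p_v^{\,k}=\bar p_v+O(\bar p_v^{\,2}),
\end{equation*}
which is (\ref{R_T+1}). For (\ref{R_T}), on $|z|\le 1+(C_0T)^{-1}$ we bound
\begin{equation*}
|R_{T,v}(z)|\le 1+\bigl(1+(C_0T)^{-1}\bigr)^{T-1}(R_{T,v}(1)-1)\le 1+e^{1/C_0}O(\ln^{-1}n)=1+O(\ln^{-1}n).
\end{equation*}

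For $\varkappa=\{x,y\}$ with $\dist(x,y)\ge 20$, the graphs \G\ and $\G_\varkappa$ coincide outside small balls around $x$ and $y$ and no short $\B$--cycle involves both; conditional on the first step of $\EW_\varkappa$ landing in $\N(x)$ (probability $\deg(x)/\deg(\varkappa)$), the walk reaches $y$ within $T-1$ steps only with probability $o(\ln^{-2}n)$, since by \textbf{P2} the hitting time from $\N(x)$ to $y$ is $\Omega(n)$ and $T/n=o(\ln^{-2}n)$. On the complementary event the visit to $\varkappa$ is a visit to $x$ and mirrors $\EW_x$ in \G, and symmetrically on the $y$-side, so
\begin{equation*}
\bar p_\varkappa=\frac{\deg(x)}{\deg(\varkappa)}\bar p_x+\frac{\deg(y)}{\deg(\varkappa)}\bar p_y+O(\ln^{-2}n).
\end{equation*}
This yields (\ref{R_T+2}) by the same multiple-return argument, (\ref{R_T+}) by the convexity bound, and, after multiplying through by $\deg(\varkappa)=\deg(x)+\deg(y)$ and rearranging using $\bar p_{*}=O(\ln^{-1}n)$, the identity (\ref{R_T+3}).

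The main obstacle is the crossover estimate in the merged case: one must guarantee that with probability $1-O(\ln^{-2}n)$ a walk of length $o(\ln^3n)$ from a neighbor of $x$ does not reach $y$. The natural route combines the local tree-approximation from \textbf{P6}--\textbf{P7} (on the almost-tree ball around $x$ the walk has $\Theta(1)$ escape probability per step, so return-to-$x$ probabilities are controlled by the first few steps) with the conductance bound \textbf{P2} (hitting times between fixed distant vertexes are $\Omega(n)$). A secondary technical point is the uniform lower bound $\bar p_v\gtrsim\ln^{-1}n$ for $v\in\SMALLv$, where one has to use \textbf{P7} and the clique structure from \textbf{P8} to rule out degenerate local configurations in which $f_2$ becomes too small.
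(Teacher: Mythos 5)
Your overall architecture (first establish $\bar p_v$ and $\bar p_\varkappa$, then the radius-$|z|\le 1+(C_0T)^{-1}$ bound by the positivity of the coefficients $r_i$, then $(\ref{R_T+3})$ by conditioning on the first step) is the same as the paper's, and several ingredients are sound: the lower bound $\bar p_v\ge\Pr_{\G}(\tau_v=2)\ge 1/\Delta\asymp\ln^{-1}n$ is exactly the paper's argument, and your strong-Markov bound $R_{T,v}(1)-1=\sum_{k\ge1}\Pr\{\text{$\ge k$ returns in $[1,T-1]$}\}\le\sum_{k\ge1}\bar p_v^k$ is a valid and slightly cleaner way to pass from the upper bound on $\bar p_v$ to (\ref{R_T+1}) than the paper's $Z_v(\tau_v+1)$ decomposition. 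But two load-bearing estimates are not actually established, and the way you propose to reach them would not work.

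First, the upper bound $\bar p_v=O(\ln^{-1}n)$ (equivalently $\tilde R=\sum_{i\le T}r_i=O(\ln^{-1}n)+O(T\ln^{-5}n)$) is the genuinely hard content of (\ref{R_T})--(\ref{R_T+1}). Your argument that ``all other first returns go via a nearby short $\B$--cycle or one of the $|\W'(v)|\le\Delta$ cliques containing $v$, \ldots\ and there are only $O(1)$ such short excursions by \textbf{P7}'' only controls returns occurring at small times $i$; it says nothing about first returns at times $i$ comparable to $T$, where the walk has wandered to distance $\gg 1$ and drifted back. The paper controls precisely this via the projected walk $\EW'(i)=\min\{7,\dist(v,\EW_v(i))\}$, its laziness-removed version $\EW''$, and the barrier at distance $7$: once the walk reaches $\N_7(v)$, each ``attempt'' to return to $v$ succeeds with probability $O(\ln^{-6}n)$ (or $O(\ln^{-5}n)$ when a small vertex is nearby), and $T$ attempts contribute $O(T\ln^{-5}n)=o(\ln^{-1}n)$. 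The separate Cases (1)--(3), depending on whether a small vertex lies in $\N_1(v)\cup\cdots\cup\N_7(v)$ and at which distance, are needed because the one-step escape probabilities $p'_{j+1,j}$ degrade at the small vertex; the paper handles this by inserting the extra state $\ok$ and invoking observation \textbf{(O)}. None of this is replaced by your remark that the walk ``stays in a ball of radius $O(\ln^3 n)$''---that is a trivial consequence of the step count and does not constrain the return probability.

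Second, the crossover estimate you identify as the ``main obstacle'' cannot be closed by \textbf{P2} in the way you propose. An expected hitting time $\Omega(n)$ from $\N(x)$ to $y$ does not imply $\Pr\{\text{hit $y$ in $T$ steps}\}=O(T/n)$; the distribution of the hitting time is not controlled by its mean alone, and the conductance bound \textbf{P2} gives mixing estimates, not short-time upper tail bounds for hitting a specific vertex. What the paper actually does is entirely local: colour the ball around $x$ red and the ball around $y$ blue, observe these are disjoint since $\dist(x,y)\ge20$, and apply the same distance-$7$ barrier to the red and blue pieces separately. Defining $A_u$ as the event that the first return to $u$ precedes the first visit to $\N_7(u)$, one gets $\Pr_{\G}\{A_u\}=1-O(T\ln^{-5}n)$, hence $\bar p_u=\bar p'_u+O(T\ln^{-5}n)$ with $\bar p'_u:=\Pr_{\G}(\{\tau_u\le T-1\}\cap A_u)$, and the exact identity $\bar p'_\varkappa=q_x\bar p'_x+q_y\bar p'_y$ because a red excursion confined to the ball of radius $7$ around $x$ is identical in $\G$ and $\G_\varkappa$. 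This gives (\ref{R_T+3}) with error $O(T\ln^{-5}n)+O(\ln^{-2}n)=O(\ln^{-2}n)$ after dividing out. Your ``secondary technical point'' about small $v$ is a non-issue: $\Pr_{\G}(\tau_v=2)\ge1/\Delta$ already holds uniformly by \textbf{P3} alone, with no appeal to \textbf{P8}.
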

\begin{proof}
We establish (\ref{R_T}-\ref{R_T+3}) for 
${\cal G}$ having properties $\bf P1$-$\bf P8$, see 
Lemma \ref{LemProperties}. 

{\it Proof of} (\ref{R_T}). We have 
$\bigl||R_{T,v}(z)|-1\bigr|
\le 
|\sum_{i=1}^{T-1}r_iz^i|$
 and for $|z|\le 1+(C_0T)^{-1}$
\begin{equation}\label{2019-03-11+6}
\Bigl|\sum_{i=1}^{T-1}r_iz^i\Bigr|
\le 
\sum_{i=1}^{T-1}r_i|z|^i
\le 
(1+(C_0T)^{-1})^T{\tilde R}
\le e^{C_0^{-1}}{\tilde R},
\qquad 
{\tilde R}:=\sum_{i=1}^{T}r_i.
\end{equation}

We show  below that ${\tilde R}=O(\ln^{-1}n)+O(T\ln^{-5} n)$  uniformly in 
$v\in V$.  
Note that 
${\tilde R}$ is the expected number of returns 
to $v$ of the random 
walk $\EW_v$ in the time interval 
$[1, T]$.

We begin with an observation, denoted (O), about random walks
on  directed graph with the vertex set 
$\{0,{\onul}, 1,2,3\}$, where $3$ is an 
absorbing state. 
\medskip

\noindent {\bf (O)} {\it Assume, that
the transitional probabilities
$p_{0,3}=p_{\onul,3}=p_{1,3}=p_{0,2}=0$
and 
$0<p_{0,\onul}, p_{0,1}, p_{1,2}, p_{2,1}, p_{2,3}<1$ are fixed.
The walk starts at $0$ and it is allowed to make 
$t$ steps.
 Then for any $t$, the expected number of returns to $0$ before 
 visiting
 $2$ is maximized if we choose
\begin{displaymath}
p_{\onul,0}=1,
\
\
p_{\onul,1}=p_{\onul,2}
=
p_{1,\onul}=p_{2,\onul}=0,
\
\
p_{1,0}=1-p_{1,2},
\
\
p_{2,0}=1-p_{2,1}-p_{2,3}.
\end{displaymath}}

{\it Case} (1). Assume that 
$\N_1(v)\cup\cdots\cup \N_7(v)$ contains no small vertexes.
The random walk  $\EW'(i)=\min\{7, \dist(v, \EW_v(i)\}$
moves along the path of length $7$ and has the state
 space $\{0,1,\dots, 7\}$. Its
transitional probabilities  satisfy inequalities
\begin{equation}\label{2019-03-11}
p'_{j+1,j}\le c'/\ln n,
\qquad
p'_{j,j}\le c'/\ln\ln n,
\quad
 0\le j\le 6,
 \end{equation} 
 where $c'$ is an absolute constant.
Indeed, by {\bf P6}, every $u\in {\cal N}_{j+1}(v)$ is 
adjacent to at most two vertexes from 
${\cal N}_{j}(v)$. Now {\bf P5} implies 
$p'_{j+1,j}=O(\ln^{-1}n)$. Furthermore, by {\bf P4}, 
each of these vertexes shares with $u$ at most 
$a_{\star}(\ln n)/\ln\ln n$ common neighbors from 
${\cal N}_{j+1}(v)$.
In addition, by {\bf P7}, there can be at most $2$ 
vertexes in ${\cal N}_{j+1}(v)$ adjacent to $u$ and having no common neighbors with $u$ located in 
${\cal N}_i(v)$.
 Therefore, every $u\in {\cal N}_{j+1}(v)$ can have at 
 most $2+2a_{\star}(\ln n)/\ln\ln n$ neighbors in 
${\cal N}_{j+1}(v)$ altogether.
Now {\bf P5} imply 
$p'_{j+1,j+1}=
O(1/\ln\ln n)$, for $0\le j\le 5$.
Finally, we obviously have $p'_{0,1}=1$.
 \begin{figure}
	\includegraphics[width=14cm]{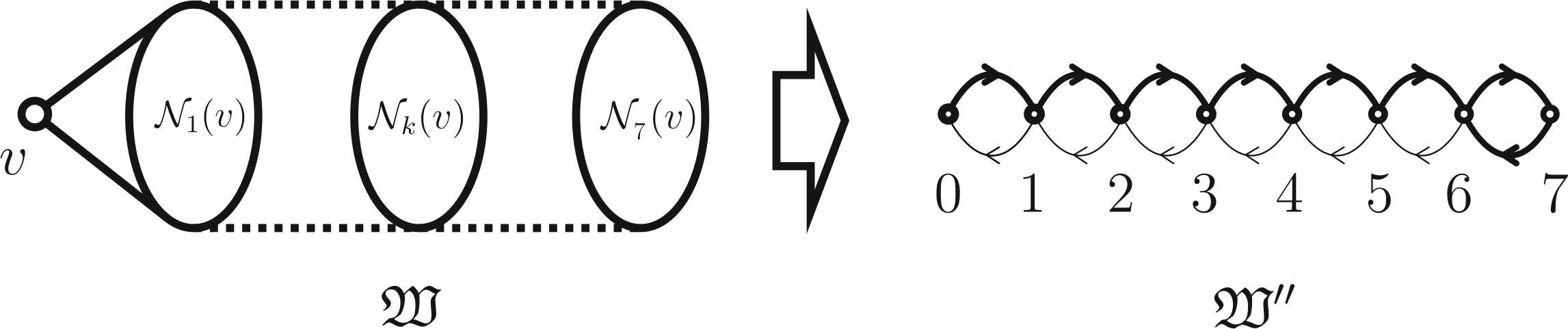}
	\caption{Transition from walk $\EW$ to $\EW''$ in Case (1). }
\end{figure}

%

The random walk $\EW'$ is lazy: it may stay at state $j>1$ for several consecutive steps. Let $\EW''$
be the fast random walk defined by $\EW'$  as follows: 
$\EW''$  only makes a step when $\EW'$ changes its state. In the latter case the moves of $\EW'$ and $\EW''$ coincide.  Its transitional probabilities
\begin{equation}\label{2019-03-11+2}
p''_{0,1}=p''_{7,6}=1,
\quad
p''_{j+1,j}=p'_{j+1,j}/(1-p'_{j,j})
\quad
{\text{ for}}
\quad
0\le j\le 5
\quad
{\text{ and}}
\quad
p''_{i,i}=0
\quad
\forall i.
\end{equation} 
We have
 ${\tilde R}=R'\le  R''$, where
 $R'$ and  
$R''$ denote the expected numbers of 
returns  to $0$ within the first 
$T$
 steps of respective random walks $\EW'$ and $\EW''$.
 We split $R''=R''_1+R''_2$, where $R''_1$ is the expected number of returns to $0$ before the first visit of $7$.
We have $R''_1\le \E_{\cal G} X$, where $X$ is the number of backward steps made by $\EW''$ before visiting $7$.
The inequality $p''_{j+1,j}\le c'(1+o(1))/\ln n$, see (\ref{2019-03-11}), (\ref{2019-03-11+2}), implies 
\begin{displaymath}
{\rm Pr}_{\cal G}\{X=k\}
\le 
((6c'+o(1))/\ln n)^k, 
\quad
k\ge 0.
\end{displaymath}
Hence $\E_{\cal G} X=O(\ln^{-1}n)$ and we obtain 
$R''_1=O(\ln^{-1}n)$.
Furthermore,  
after visiting $7$ the random walk $\EW''$ moves to $6$.
Starting from $6$ the walk may visit $0$ before visiting $7$ again, we call such event  
a success.  The probability of success is $O(\ln^{-6}n)$ see, e.g., formula (30) in \cite{CooperFrieze2008}.
 The expected number of successes within the first $T$ steps of the random walk  
is at most $O(T\ln^{-6}n)$.
Hence $R''_2$, the expected number of returns 
to $0$ after the first  visit of $7$, is  at most
$O\bigl(T(1+ R''_1)\ln^{-6}n\bigr)$. 
Here $R''_1$ accounts for the returns to $0$ 
after a success and before visiting $7$ again.
We conclude that 
${\tilde R}\le R''_1+ R''_2=O(\ln^{-1}n)+O(T\ln^{-6}n)$.
\smallskip

{\it Case} (2). Assume that 
$\N_{k+1}(v)$ contains a small vertex, 
say ${\overline v}$, for some 
$0\le k\le 5$.
Note that, by {\bf P7}, 
there is no other small vertex in ${\cal G}$
within the 
distance $O(\ln\ln n)$ from $v$.
%
Now we define 
$\EW'(i)=\min\{7, \dist(v, \EW_v(i))\}$, 
for $\EW_v(i)\not=\ov$, and put
$\EW'(i)=\ok$, for 
$\EW_v(i)=\ov$. It is a random walk on the state space 
$S_k=\{0,1,\dots, k,\ok,k+1,\dots, 7\}$. 
Furthermore, let $\EW''$ be  
the corresponding fast random walk on $S_k$: 
$\EW''$
only makes a step when $\EW'$ changes its 
state and  in the latter case the moves of 
$\EW'$ and $\EW''$ coincide. 
Arguing as in  
(\ref{2019-03-11}), (\ref{2019-03-11+2}) we obtain the 
corresponding inequalities for the transitional 
probabilities ${\overline p}_{i,j}$ of $\EW''$
\begin{equation}\label{1MB}
\op_{k,\ok},
\,
\op_{r, \ok},
\,
\op_{j,j-1}
\le 
c'/\ln n, 
\
\
\
r=k+1,k+2,
\
\
1\le j\le 6.
\end{equation}
Note that $\op_{\ok,k}, \op_{k,\ok}, 
\op_{j,j+1}>0$, $0\le j\le 6$. Furthermore, we have
 $\op_{\ok,r}, \op_{r,\ok}>0$ 
whenever $\ov$ has a neighbor in $\N_{r}(v)$, 
$r=k+1,k+2$. 
Moreover,  we have $\op_{7,6}=1$ and $\op_{j,j}=0$ 
for all $j\in S_k$.
Finally, 
$\op_{0,1}=1$ for $k>0$ and 
$\op_{0,1}+\op_{0,{\bar 1}}=1$ for $k=0$.
All the other transitional probabilities 
$\op_{i,j}$ are zero. 
From now on 
we consider the cases $k=0$ and $k\ge 1$ separately.

\begin{figure}
	\includegraphics[width=14cm]{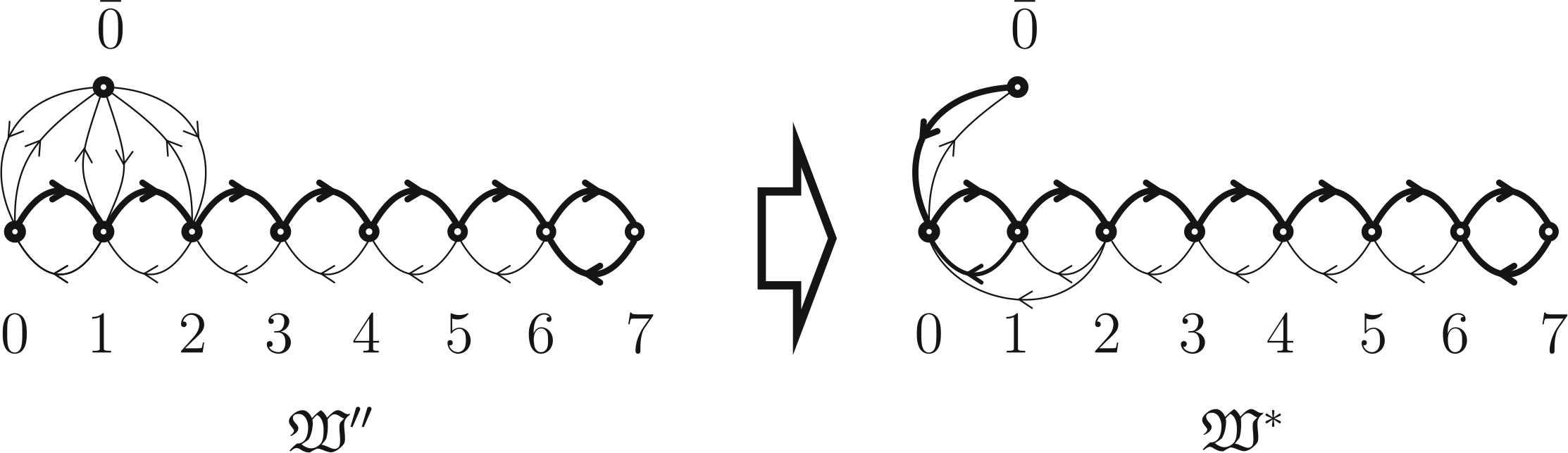}
	\caption{Random walks $\EW''$ and $\EW^*$ in Case (2) with $k=0$.}
\end{figure}

Assume that  $k=0$, i.e.,  $\ov\in \N_1(v)$.
Let  $\EW^*$ be the
 random walk on $S_0$ starting from $0$ and with transitional probabilities $p^{\star}_{i,j}=\op_{i,j}$ for each $(i,j)\in  S_0\times S_0$, but
 \begin{equation}\label{2019-03-11+4}
 p^*_{{\overline 0},0}=1,
 \
\ 
 p^*_{{\overline 0},r}
 =
 p^*_{r,{\overline 0}}
 =
 0,
 \
 \
p^*_{r,0}=\op_{r,0}+\op_{r,{\overline 0}},
\quad
r=1, 2.
\end{equation} 
We have ${\tilde R}=R'\le R''\le R^*$,
where $R'$, $R''$ and  $R^*$ denote the expected numbers of returns to $0$ within the first 
$T$ steps of respective random walks 
$\EW'$, 
$\EW''$ and
$\EW^*$. The last inequality follows from observation (O). Let us consider the first $T$ steps of $\EW^*$.
We split $R^*=R^*_2+R^*_3$, where
$R^*_2$ ($R^*_3$)  denotes the 
expected number of returns
to $0$ 
before (after) the first visit to $2$.
From  (\ref{1MB}), (\ref{2019-03-11+4}) we easily obtain
that  $R^*_2=O(\ln^{-1}n)$.
After visiting $2$ the walk $\EW^*$ moves to $3$  with probability at least 
$1-2c'/\ln n$ and it moves towards $0$ with probability 
at most $2c'/\ln n$. In the latter case the random walk 
will  be back at $2$ after perhaps visiting $0$ and the 
expected number of  visits to $0$ before returning to 
$2$  is at most $1+R^*_2$.  Hence, the expected number of returns to $0$ after visiting $2$ and 
 before visiting $3$ is 
at most
$O((1+R^*_2)\ln^{-1} n)=O(\ln^{-1}n)$.
  Next we 
consider random walk $\EW^*$ 
restricted to the path $\{2,3,\dots, 7\}$, where
  $2$ and $7$ are reflecting states.
Assuming that the walk starts at $2$ we add  the expected number of at most $O(\ln^{-1}n)$  visits to $0$ after each return to $2$.
Proceeding as in {\it Case} (1) we estimate $R^*_3\le
O(\ln^{-1}n)+O(T\ln^{-5}n)$

\begin{figure}
	\includegraphics[width=14cm]{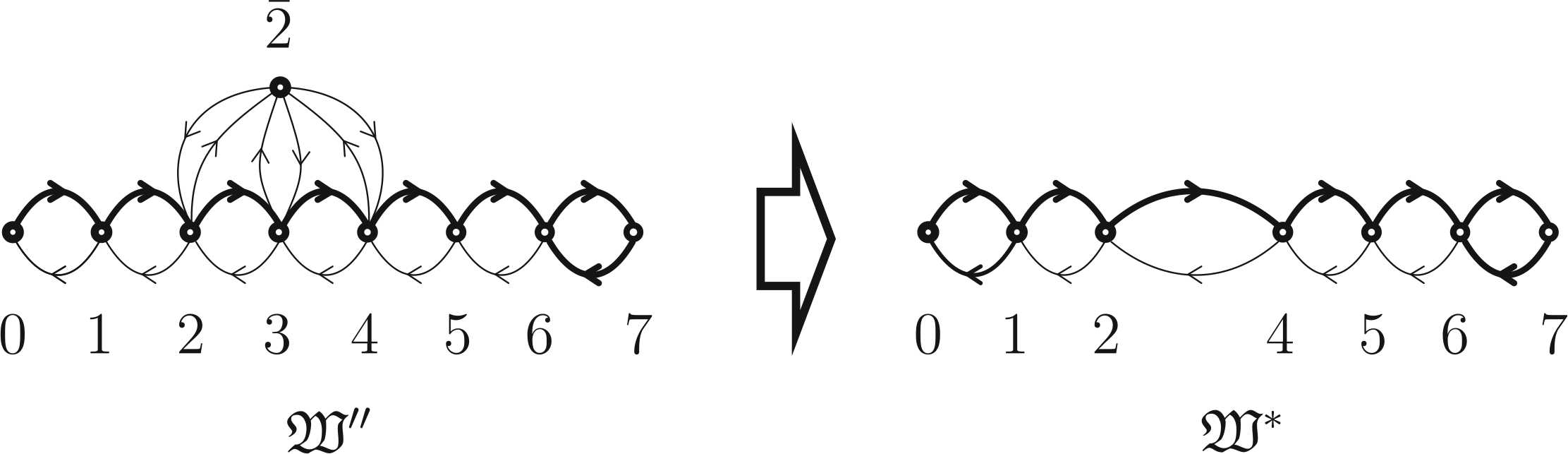}
	\caption{Random walks in Case (2) with $k=2$.}
\end{figure}
\smallskip
Assume that $1\le k\le 5$. We  follow the movements of 
$\EW''$ 
on the subset  
$S_k^{*}=\{0,1,\dots, k , k+2,\dots 7\}\subset S_k$ and
only register a move when the walk changes its state
in $S_k^{*}$.  
The walk moves along the path $S_k^{*}$ and has left 
and right reflecting states $0$ and $7$.
From (\ref{1MB})
we obtain that  it
 moves right (from each state but $7$) with probability at least 
$1-2c'\ln^{-1}n$. Arguing as in {\it Case} (1) we show that the expected number of returns to $0$ within the first $T$ steps is 
$O(\ln n)+O(T\ln^{-5}n)$. Obviously, it is an upper bound for ${\tilde R}$.

{\it Case }(3). For  $k=6$, i.e., ${\ov}\in \N_7(v)$, the set
$\N_1(v)\cup\cdots\cup \N_6(v)$ contains no small vertex. The argument used in Case (1) yields the bound 
${\tilde R}=O(\ln n^{-1})+O(T\ln^{-5}n)$.

\smallskip

{\it Proof of} (\ref{R_T+}). We proceed as in  (\ref{2019-03-11+6}). Using the fact that
${\bar R}:=\sum_{i=1}^{T}{\bar r}_i$
is the expected number of returns 
to $\varkappa$ of the random 
walk $\EW_{\varkappa}$ in the time interval $[1; T]$
we show that ${\bar R}=O(\ln^{-1}n)+O(T\ln^{-5} n)$.

Note that the vertex sets
${\cal N}_x:={\cal N}_1(x)\cup\cdots\cup {\cal N}_9(x)$ 
and
${\cal N}_y:={\cal N}_1(y)\cup\cdots\cup {\cal N}_9(y)$,
defined by ${\cal G}$, 
do not intersect since $x$ and $y$ are at distance at 
least $20$. 
We paint vertexes of ${\cal N}_x$ red and those of 
${\cal N}_y$ blue. While $\EW_{\varkappa}$ stays in 
${\cal N}_x$ (respectively ${\cal N}_y$) we call 
$\EW_{\varkappa}$
red (respectively blue). The path drawn by  red random 
walk 
corresponds to that of $\EW_x$ in ${\cal N}_x$ and the 
path drawn by blue random walk corresponds to that of  
$\EW_y$ in ${\cal N}_y$ (the walk may change its color after every visit to $\varkappa$).
For $1\le i\le 9$ we denote by
${\cal N}_i(\varkappa)={\cal N}_i(x)\cup{\cal N}_i(y)$ 
the set of vertexes at distance $i$ from 
$\varkappa$ in ${\cal G}_{\varkappa}$.
Note that 
${\cal N}_x\cup{\cal N}_y$ may contain at most two small 
vertexes, by $\bf P7$. Therefore, at least one of the 
sets ${\cal N}_i(\varkappa)$, $i=7,8,9$ has no small 
vertexes.  Assume it is ${\cal N}_7(\varkappa)$ (the 
cases $i=8,9$ are treated in the same way). 

Now we  analyse the blue and red walks similarly as in the proof of  (\ref{R_T}) above.
At the 
moment of the first visit of  
 ${\cal N}_7(\varkappa)$ by $\EW_{\varkappa}$ the 
 expected number of returns to $\varkappa$  is 
 $O(\ln^{-1} n)$.
Indeed, the number of returns is the sum of returns of the red and the blue walks. 
But the expected number of returns of the red walk 
 before it reaches ${\cal N}_7(x)$ 
 is the same as that of $\EW_x$ in ${\cal G}$. This number is $O(\ln^{-1} n)$, see {\it Cases} (1), (2) above. Similarly, the expected number of returns  of the blue walk before it reaches 
 ${\cal N}_7(y)$ is $O(\ln^{-1}n)$. 
After the first visit of ${\cal N}_7(\varkappa)$ the random walk $\EW_{\varkappa}$ stays in a distance at least $7$ from $\varkappa$ until it makes the first move from ${\cal N}_7(\varkappa)$ to 
${\cal N}_6(\varkappa)$. This move can be red or blue. A red (blue) move is successful if continuing from 
${\cal N}_6(x)$ (${\cal N}_6(y)$) the red (blue) walk visits 
$\varkappa$ before visiting 
${\cal N}_7(\varkappa)$ again. 
The probability of success is $O(\ln^{-5}n)$, see {\it Cases} (1), (2) above. Note that after a successful visit to 
$\varkappa$ and before visiting 
${\cal N}_7(\varkappa)$
again the walk $\EW_{\varkappa}$ may return several times to $\varkappa$, but the expected number of such returns is $O(\ln^{-1}n)$. Hence, each success occurs with probability $O(\ln^{-5}n)$ and it adds $1+O(\ln^{-1}n)$ expected number of returns to 
$\varkappa$. The expected number of successes in the time interval $[1,T]$ is at most $O(T\ln^{-5}n)$. 
Therefore 
${\bar R}= O(\ln^{-1}n)+O(T\ln^{-5}n)$.

{\it Proof of} (\ref{R_T+1}), (\ref{R_T+2}). 
We only show (\ref{R_T+1}). The proof of (\ref{R_T+2}) is much the same.
Let $Z_v(t)$ 
be the number of returns of  $\EW_v$ 
to $v$ 
in the time interval $[t,T-1]$. We define $Z_v(t)\equiv 0$ for $t\ge T$.  Let 
${\mathbb I}_{\{\tau_v\le T-1\}}$ be the indicator of the event $\tau_v\le T-1$.
We have
\begin{displaymath}
R_{T,v}(1)=1+\E_{\cal G} Z_v(1) 
\qquad
{\rm{and}}
\qquad
\E_{\cal G} Z_v(1)\le {\tilde R}=O(\ln^{-1}n).
\end{displaymath} 
The last bound is shown in the proof of (\ref{R_T}) above.
 Note that ${\mathbb I}_{\{\tau_v\le T-1\}}\le
Z_v(1)$ implies  ${\bar p}_v\le \E_{\cal G} Z_v(1)=O(\ln^{-1}n)$.
Furthermore, {\bf P3}  implies $\Pr_{\cal G}(\tau_v=2)\ge 1/{\Delta}$ and (\ref{2019-02-25+1}) implies 
$1/{\Delta}\asymp \ln^{-1}n$. Now from the inequality
${\bar p}_v
\ge
\Pr_{\cal G}(\tau_v=2)$ 
we obtain 
${\bar p}_v\asymp \ln^{-1}n$.

Let us prove the first relation of (\ref{R_T+1}).
The identity
\begin{displaymath}
Z_v(1)={\mathbb I}_{\{\tau_v\le T-1\}}Z_v(1)
=
{\mathbb I}_{\{\tau_v\le T-1\}}(1+Z_v(\tau_v+1))
\end{displaymath}
implies $\E_{\cal G} Z_v(1)={\bar p}_v+{\hat R}$, where
${\hat R}
=
\E_{\cal G} {\mathbb I}_{\{\tau_v\le T-1\}}
Z_v(\tau_v+1)=O(\ln^{-2}n)$. Indeed,
\begin{displaymath}
{\hat R}
=
\sum_{i=1}^{T-1}\E_{\cal G} {\mathbb I}_{\{\tau_v=i\}}Z_v(i+1) 
=
\sum_{i=1}^{T-1}
{\rm Pr}_{\cal G}\{\tau_v=i\}
\E_{\cal G} Z_v(i+1)
\le
{\bar p}_v\E_{\cal G} Z_v(1)=O(\ln^{-2}n).
\end{displaymath}
Here we used $\E_{\cal G}Z_v(i)\le \E_{\cal G} Z_v(1)$, for $i\ge 1$.

{\it Proof of} (\ref{R_T+3}).
Let $q_x=\deg(x)/\deg(\varkappa)$ and
$q_y=\deg(y)/\deg(\varkappa)$ be the 
probabilities that the first move of $\EW_{\varkappa}$ is red and blue respectively.
Let $A_{x}$  be the event that the first return 
of $\EW_x$ in the time interval $[1,T-1]$ 
occurs before the first visit to ${\cal N}_7(x)$.
Similarly we define the events $A_y$ and 
$A_{\varkappa}$. Define the probabilities 
${\bar p}'_u
=
\Pr_{\cal G}\bigl\{\{\tau_u\le T-1\}\cap A_u\bigr\}$ 
for $u=x,y,\varkappa$.
The relations  
$\Pr_{\cal G}\{A_u\}=1-O(T\ln^{-5}n)$ 
(which are shown using the same argument as  (\ref{R_T}), 
(\ref{R_T+})) imply
${\bar p}_u={\bar p}_u'+O(T\ln^{-5}n)$  
for $u=x,y,\varkappa$. Now the identity 
${\bar p}'_{\varkappa}
= 
q_x{\bar p}'_{x}
+
q_y{\bar p}'_{y}
$
implies 
${\bar p}_{\varkappa}
=
q_x{\bar p}_{x}
+
q_y{\bar p}_{y}+O(T\ln^{-5}n)$.  
Combining this identity with relations 
\begin{displaymath}
(1+{\bar p}_u)^{-1}
=
1-{\bar p}_u+O({\bar p}_u^2)=1-{\bar p}_u+O(\ln^{-2}n),
\quad
u=x,y,\varkappa
\end{displaymath}
we obtain
\begin{displaymath}
\frac{1}{1+{\bar p}_\varkappa}
=
\frac{q_x}{1+{\bar p}_x}
+
\frac{q_y}{1+{\bar p}_y}
+O(T\ln^{-5}n)+O(\ln^{-2}n).
\end{displaymath}
\end{proof}

\subsection{Probability of the first visit by time (\ref{2019-02-21})}


Here for any
typical $\G$ and any  $u,v\in \V$
we
evaluate the probability 
that     
the simple random walk  $\EW_u$ 
starting from  $u\in \V$
does not visit $v$ after time 
$T$  and before time (\ref{2019-02-21}).  
We choose $T=\Theta(\ln n)$ satisfying
(\ref{CooperFrieze1}) and Lemma \ref{LemNotVisit} (ii). By
 {\bf P1}, {\bf P2}, {\bf P3}, such 
 $T=\Theta(\ln n)$  exists and it does not depend on particular instance 
 $\G$, see \cite{SincalirJerrum}.

\noindent
 We write the principal term of (\ref{2019-02-21}) in the form 
$\lambda mn^2p^2$ and
approximate it by 
\begin{equation}\label{EqDeft}
t_0=\lambda_0 mn^2p^2
\qquad 
\text{and}
\qquad 
t_1=\lambda_1mn^2p^2.
\end{equation}
Here
\begin{equation}\label{EqDefLambda}
\lambda=\ln\frac{np}{\ln\left(a+1\right)},
\qquad
\lambda_1=\ln\frac{np}{\ln\left(Aa+1\right)},
\qquad
\quad
\lambda_0=(1+\eps_n)\lambda
\end{equation}
and
\begin{equation*}
A=\exp\left({\frac{10\ln\ln n}{(c-1)\ln n}}\right),
\qquad
a=\frac{c-1}{c}(e^{np}-1),
\qquad
\eps_n=\frac{\ln\ln n}{\ln n}.
\end{equation*}
In Fact 5 we collect several 
observations about $\lambda$'s. The proof is given in Appendix B.

\begin{fact}\label{FactLambda}\ 
	\begin{itemize}
		\item [(i)] $\lambda>0$ is bounded away from $0$
		by a constant.
		\item [(ii)] $\lambda\le 2\ln\ln n$.
		\item [(iii)] If $(c-1)\le (\ln n)^{-1/3}$ then $\lambda\ge (\ln\ln n)/4$.
		\item [(iv)] $\lambda=\Theta\bigl(1+|\ln (c-1)|\bigr)$.
		\item [(v)] $\lambda_1=(1+o(1))\lambda$.
		\item [(vi)] 
		$\ln(c\kappa/(c\kappa-1))<\lambda<\ln(c/(c-1))$,
		\quad
		where
		\quad 
		$\kappa=np(1-e^{-np})^{-1}$.
	\end{itemize}	
\end{fact}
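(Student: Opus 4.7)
My plan is to prove (vi) first as the central structural input, then derive the other claims from (vi) together with an asymptotic expansion of $\lambda$ as $c \to 1$. Set $x = np = \Theta(1)$ (so $x$ is bounded between two positive constants) and $b = (c-1)/c \in (0,1)$, giving $a = b(e^x - 1)$ and $1-b = 1/c$; the quantity $\kappa = x/(1-e^{-x})$ is bounded with $\kappa > 1$ bounded away from $1$.

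For (vi), both inequalities reduce to elementary scalar identities. The upper bound $\lambda < \ln(c/(c-1))$ is equivalent to $\ln(a+1) > bx$, i.e., $be^x + (1-b) > e^{bx}$, which is strict convexity of $t \mapsto e^t$ applied to $bx = b\cdot x + (1-b)\cdot 0$. For the lower bound $\lambda > \ln(c\kappa/(c\kappa - 1))$, use $x(c\kappa - 1)/(c\kappa) = x - (1-e^{-x})/c = x - (1-b)(1-e^{-x})$; setting $z := (1-b)(1-e^{-x}) > 0$, the desired inequality reduces to $be^x + (1-b) < e^{x-z}$, equivalently $1 - z < e^{-z}$, a standard elementary inequality.

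Claim (i) follows immediately from (vi): $c$ and $\kappa$ are bounded above, so $c\kappa - 1 \le C$ for some constant $C$, and $\lambda > \ln(1 + 1/C) > 0$. Claim (ii) likewise follows from (vi) plus the hypothesis $(c-1)\ln n \to \infty$: eventually $c - 1 \ge 1/\ln n$, hence $|\ln(c-1)| \le \ln\ln n$, and $\lambda < \ln(c/(c-1)) = \ln c + |\ln(c-1)| \le 2\ln\ln n$ for large $n$. For (iii) and (iv) I use the expansion
\begin{displaymath}
\lambda = \ln(c/(c-1)) + \ln\bigl(x/(e^x-1)\bigr) + O(a),
\end{displaymath}
obtained from $\ln(a+1) = a(1+O(a))$ and valid when $a$ is small. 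Since $\ln(x/(e^x-1)) = O(1)$ and $a = O(1)$, this yields $\lambda = \ln(c/(c-1)) + O(1)$, so $\lambda \ge |\ln(c-1)| - M$ for a constant $M$. Thus $c-1 \le (\ln n)^{-1/3}$ forces $\lambda \ge (\ln\ln n)/3 - M \ge (\ln\ln n)/4$ for large $n$, proving (iii). For (iv), combine the expansion (giving $\lambda = |\ln(c-1)|(1+o(1))$ when $c-1 \to 0$) with $\lambda = \Theta(1)$ when $c-1$ is bounded away from $0$ (from (vi)) to conclude $\lambda = \Theta(1 + |\ln(c-1)|)$.

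For (v), compute $\lambda_1 - \lambda = -\ln(\ln(Aa+1)/\ln(a+1))$. When $a \to 0$, Taylor expansion gives $\ln(Aa+1)/\ln(a+1) = A + O(a)$, so $\lambda_1 - \lambda = -\ln A + o(1)$; when $a$ is bounded away from $0$ (which forces $c-1$ bounded away from $0$, whence $\ln A \to 0$), the same computation yields $\lambda_1 - \lambda = O(\ln A) + o(1)$. It then suffices to show $\ln A = o(\lambda)$ with $\ln A = 10\ln\ln n/((c-1)\ln n)$. When $c-1$ is bounded below, $\lambda = \Theta(1)$ and $\ln A = O(\ln\ln n/\ln n) \to 0$; when $c-1 \to 0$, write $g(n) := (c-1)\ln n \to \infty$ to get $\lambda \sim \ln\ln n - \ln g(n)$ and $\ln A = 10\ln\ln n/g(n)$, so the ratio $\to 0$ follows from $g(n) \to \infty$. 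The main technical delicacy is exactly this last estimate: it is tight precisely when $g(n)$ grows slowly (for instance $g(n) = \Theta(\ln\ln n)$, where $A$ is merely a bounded constant rather than tending to $1$), yet the bound $\ln A = o(\lambda)$ still holds because $\lambda$ itself is of order $\ln\ln n$ in this regime.
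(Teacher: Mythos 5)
Your proposal is correct, but it organizes the argument differently from the paper and in places uses genuinely different techniques. The paper proves (i)--(vi) in order, each with its own self-contained estimate; you instead prove (vi) first as the structural pivot and extract (i) and (ii) from it, which is a nice economy. For (vi) the paper compares the derivatives $\partial/\partial x$ of the two sides of each inequality; your reduction of the upper bound to strict convexity of $t\mapsto e^t$ applied at the convex combination $bx = b\cdot x + (1-b)\cdot 0$, and of the lower bound to the elementary inequality $1-z<e^{-z}$, is cleaner and makes the source of each inequality transparent. For (iii) and (iv) the paper uses the one-sided bound $\ln(1+a)\le a$ to get $\lambda\ge\ln(np/a)$; your two-sided expansion $\lambda=\ln(c/(c-1))+\ln(x/(e^x-1))+O(a)$ is essentially the same device and yields the same conclusion. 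For (v) the paper splits on $(c-1)\ln n\lessgtr(\ln\ln n)^2$ whereas you split on whether $a$ tends to zero or stays bounded away from zero; both case splits do the job once one verifies $\ln A=o(\lambda)$.

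Two small points to tighten. In (ii), the identity $\ln(c/(c-1))=\ln c+|\ln(c-1)|$ fails when $c\ge 2$; the claimed bound $\lambda\le 2\ln\ln n$ still holds there because $\ln(c/(c-1))\le\ln c=O(1)$, so you should either replace the equality by the inequality $\ln(c/(c-1))\le\ln c+|\ln(c-1)|$ (which is valid for all $c>1$) or treat $c\ge 2$ separately. In (v), the phrase ``the ratio $\to 0$ follows from $g(n)\to\infty$'' is a bit too quick: from $\lambda\sim\ln\ln n-\ln g(n)$ and $\ln A=10\ln\ln n/g(n)$ one formally gets $\ln A/\lambda\approx 10/\bigl(g(n)(1-\ln g(n)/\ln\ln n)\bigr)$, and the parenthesis can tend to $0$ when $g(n)$ is close to $\ln n$. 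The clean way to close this is a two-case observation: if $g(n)\le\sqrt{\ln n}$ then $\lambda\ge\tfrac12\ln\ln n(1+o(1))$ and $\ln A/\lambda=O(1/g(n))\to 0$; if $g(n)>\sqrt{\ln n}$ then $\ln A\le 10\ln\ln n/\sqrt{\ln n}\to 0$ while $\lambda$ is bounded below by (i). With that spelled out, your argument for (v) is complete, and the proof as a whole is sound.
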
 
\noindent
Note that inequalities $\lambda_1<\lambda<\lambda_0$ (the first one follows as $A>1$) and Fact 5 (v) imply
\begin{displaymath}
t_1<\lambda m n^2p^2<t_0
\qquad
{\text{and}}
\qquad
 t_1= (1-o(1))t_0
\asymp
\bigl(1+|\ln(c-1)|\bigr)n\ln n.
 \end{displaymath}

Now we show  that whp we have  uniformly in 
$v,x,y\in\V$ 
with $\dist(x,y)\ge 20$
\begin{eqnarray}
\label{A++}
\PraG{{\cal A}_{t_i}(v)}
&
=
&
e^{-\deg(v)\lambda_i/(1+{\bar p}_v)}\bigl(1+o(1)\bigr)+o(n^{-3})
\\
\nonumber
&
\ge
&
e^{-\deg(v)\lambda_i}\bigl(1+o(1)\bigr)+o(n^{-3}),
\\
\label{2019-04-26}
\PraG{\A_{t_1}(x)\cap \A_{t_1}(y)}
&
= 
&
\PraG{\A_{t_1}(x)}\PraG{\A_{t_1}(y)}
(1+o(1))
+o(n^{-3}).
\end{eqnarray}
We recall that ${\bar p}_v=O(\ln n)$ is defined in \eqref{R_T+2}.

\begin{proof}[Proof of \eqref{A++}]
From {\bf P1}, {\bf P3} and  (\ref{CooperFrieze2}), (\ref{R_T+1}) 
we obtain for $T=O(\ln n)$ that
\begin{eqnarray}
\label{XI-28-2}
&&
\pi_v
=
\deg(v)/\bigl(2|{\cal E}(\G)|\bigr)
=
\deg(v)(mn^2p^2)^{-1}(1+O(n^{-1/2}))
=
O(n^{-1}),
\\
\label{XI-28-1}
&&
p_v
=
\pi_v(1+{\bar p}_v)^{-1}(1+O(\ln^{-2}n))
=
\pi_v(1+O(\ln^{-1}n)).
\end{eqnarray} 
Combining these relations and using {\bf P3} and  Fact~\ref{FactLambda} (ii) we  obtain 
\begin{equation}\label{XI-28-3}
t_ip_v
=
\frac{\lambda_i\deg(v)}{1+{\bar p}_v}
(1+O(\ln^{-2}n))
=
\frac{\lambda_i\deg(v)}{1+{\bar p}_v}
+
O\Bigl(\frac{\ln\ln n}{\ln n}\Bigr).
\end{equation}
Furthermore, (\ref{XI-28-2}), 
(\ref{XI-28-1}) imply
\begin{equation}\label{atsibodo}
(1+p_v)^{t_i}
=
e^{t_i\ln(1+p_v)}
=
e^{t_ip_v+O(t_ip_v^2)}
=
e^{t_ip_v}\bigl(1+O(t_i/n^2))\bigr).
\end{equation}
Finally,  we apply Lemma \ref{LemNotVisit} 
with $C_0=1$ (condition (i) of Lemma \ref{LemNotVisit}
holds by (\ref{R_T})) and derive (\ref{A++})  
from (\ref{CooperFrieze3}), (\ref{XI-28-3}) and (\ref{atsibodo}). Note that in this step we estimate the remainder term of (\ref{CooperFrieze3}),
$e^{-t_1/(2C_0T)}=o(n^{-3})$.
\end{proof}

\begin{proof}[Proof of \eqref{2019-04-26}]
We use the observation  of 
\cite{CooperFrieze2008}
that ${\cal A}_{t_1}(x)\cap {\cal A}_{t_1}(y)=
{\cal A}_{t_1}(\varkappa)$, where 
${\cal A}_{t_1}(\varkappa)$ is the event that the same 
random walk $\EW_u$, when considered in 
${\cal G}_{\varkappa}$, does not visit the vertex 
$\varkappa$ of ${\cal G}_{\varkappa}$ in steps 
$T,T+1,\dots, t_1$. We recall that 
${\cal G}_{\varkappa}$ is obtained from ${\cal G}$ by merging
 vertexes $x$ and $y$ into one vertex denoted 
$\varkappa$.
Therefore, 
$\PraG{{\cal A}_{t_1}(x)\cap {\cal A}_{t_1}(y)}
=
\PraG{{\cal A}_{t_1}(\varkappa)}$.
Proceeding as in the proof of  (\ref{A++})  
(see also remark below)  we show that 
\begin{equation}
\label{A-lambda_1-varkappa}
\PraG{\A_{t_1}(\varkappa)}= 
e^{-\lambda_1\deg(\varkappa)/(1+{\bar p}_{\varkappa})}
(1+o(1))+o(n^{-3}).
\end{equation}
Furthermore, combining  (\ref{A++}) 
with
(\ref{A-lambda_1-varkappa}) and using 
(\ref{R_T+3})  
we obtain
$$
\PraG{\A_{t_1}(\varkappa)}
= 
\PraG{\A_{t_1}(x)}\PraG{\A_{t_1}(y)}
(1+o(1))
+
o(n^{-3})
$$
thus showing \eqref{2019-04-26}.
We remark that (\ref{A-lambda_1-varkappa}) refers to
the random walk in ${\cal G}_{\varkappa}$ starting at 
$u$. We note that the expansion property {\bf P2} 
extends to ${\cal G}_{\varkappa}$ and, therefore, Lemma  
\ref{LemNotVisit} applies with the same (mixing time) 
$T$. The only difference is that now we verify 
condition (i) of in Lemma \ref{LemNotVisit} using  
(\ref{R_T+})
instead of (\ref{R_T}).
\end{proof}

\noindent
Finally, we note that
${\bar p}_v=O(\ln^{-1} n)$
implies 
$\lambda_0/(1+{\bar p}_v)\ge \lambda$.  
Thus  by \eqref{XI-28-3}, \eqref{atsibodo} we get
\begin{equation}\label{EqAt0upper}
(1+p_v)^{-t_0}
=
(1+o(1))e^{-\lambda_0\deg(v)
(1+{\bar p}_v)^{-1}}
\le 
(1+o(1))e^{-\lambda\deg(v)}.
\end{equation}

\section{Cover Time}
\label{SectionCoverTime}

In  this section we consider the simple random walk 
$\EW_u$ 
on typical $\G$ starting at a vertex $u\in \V$.  Given $\G$ and $u$ we denote by
$C_u$ the expected time taken for the walk to visit every vertex of $\G$. We show that whp $t_1\le C_u\le t_0(1+o(1))$. In the proof we choose 
$T=\Theta(\ln n)$ satisfying conditions (\ref{CooperFrieze1}) and Lemma \ref{LemNotVisit}(ii)
and use the short-hand notation
$x=mpe^{-np}$ 
and 
$y=npe^{-\lambda}$, $y_1=npe^{-\lambda_1}$.

\subsection{Upper bound}

For each $u\in\V$ and all $t\ge T$ we have, see (42) of 
\cite{CooperFrieze2008} that
\begin{equation}\label{EqCu}
C_u\le t+1+\sum_v\sum_{s\ge t}
\PraG{\A_s(v)}.
\end{equation}
For  $t_0$ and $\lambda$  defined in  \eqref{EqDeft}, \eqref{EqDefLambda}, we have   
by Lemma~\ref{LemNotVisit}, see also (\ref{R_T}),
\begin{align*}
\sum_{v\in \V}\sum_{s\ge t_0}\PraG{\A_s(v)}
&
\le 
(1+o(1))
\sum_v\sum_{s\ge t_0}(\left(1+p_v\right)^{-s}
+
o(e^{-s/(2C_0T)}))
\\
&
=
(1+o(1))
\sum_v(\left(1+p_v\right)^{-t_0}\frac{1}{1(1+p_v)^{-1}}
+
o(n^{-1})
\\
&
=
(1+o(1))
\sum_v\frac{mn^2p^2}{\deg(v)}e^{-\lambda\deg(v)}
+
o(n^{-1})
\\
&
=
(1+o(1))
\sum_k \Dk \frac{mn^2p^2}{k}e^{-\lambda k}
+o(n^{-1}). 
\end{align*}
In the third line we used \eqref{XI-28-2}, \eqref{XI-28-1}, and \eqref{EqAt0upper}.
We show below that the sums 
$$
S_i
:=
\sum_{k\in K_i} \Dk \frac{1}{k}e^{-\lambda k}
=
o(1),
\qquad i=1,2,3.
$$
These bounds 
imply
$\sum_{v\in \V}\sum_{s\ge t_0}\PraG{\A_s(v)}=o(t_0)$. 
Now 
 \eqref{EqCu} yields
$C_u\le t_0+1+o(t_0)$.

\noindent
We first estimate $S_1,S_2$. For 
 $(c-1)\le (\ln n)^{-1/3}$ we have, by  Fact~\ref{FactLambda}(iii) and {\bf P8a}, 
$$
S_1\le 20 (\ln\ln n)^2e^{-\frac{1}{4}\ln\ln n} = o(1),
\qquad
S_2\le \Delta (\ln n)^4e^{-\frac{21}{4}\ln\ln n} = o(1).
$$
For $(c-1)\ge (\ln n)^{-1/3}$ we have,  by  
{\bf P8a}, {\bf P8b} and Fact~\ref{FactLambda}(i), 
$$
S_1+S_2\le 
\Delta (\ln n)^{4} \frac{1}{(\ln n)^{1/2}}e^{-\lambda (\ln n)^{1/2}} = o(1).
$$

\noindent
Now we estimate $S_3$. 
By property {\bf P8},
\begin{equation}\label{EqUpperK3}
S_3
\le
 \frac{3}{2}\sum_{k=1}^{\Delta}\EDk \frac{1}{k}e^{-\lambda k}
\le
\frac{3}{2}n^{1-c}
\sum_{i=1}^{\infty}
(mpe^{-np})^{i}
\sum_{k=i}^{\infty}
\frac{{k\brace i}}{k\cdot k!} (npe^{-\lambda})^k.
\end{equation}
To estimate the inner sum we use the following inequalities shown in Appendix B below.
\begin{fact}\label{FactStirling}
 For $y>0$  we have 
$
\sum_{k=i}^{\infty}
{k\brace i}
\frac{y^k}{k!\cdot k}
\le 
\frac{i+1}{i^2\cdot i!}\frac{(e^{y}-1)^{i}}{y}
\le 4\frac{(e^{y}-1)^i}{(i+1)!y}
$.
For $0<e^y-1<1/2$ we have
$
\sum_{k=i}^{\infty}
{k\brace i}
\frac{y^k}{k!\cdot k}
\le 
\frac{3}{2}\frac{1}{i!\cdot i} \frac{(e^{y}-1)^{i+1}}{y}
\le 3\frac{(e^y-1)^{i+1}}{(i+1)!y}
$.
\end{fact}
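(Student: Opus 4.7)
My plan rests on two ingredients: the exponential generating function identity
\[
\frac{(e^y-1)^i}{i!}=\sum_{k\ge i}{k\brace i}\frac{y^k}{k!},
\]
and the Stirling recursion ${k+1\brace r}=r{k\brace r}+{k\brace r-1}$. These together let me reduce both displayed inequalities to \emph{term-by-term} comparisons of Taylor coefficients in $y$, bypassing any integral estimates.

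For the first inequality, I would write the intermediate upper bound as
\[
\frac{i+1}{i^2\cdot i!}\cdot\frac{(e^y-1)^i}{y}
=\frac{i+1}{i^2}\sum_{l\ge i-1}{l+1\brace i}\frac{y^l}{(l+1)!},
\]
after multiplying the EGF identity by $1/y$ and shifting the index. Comparing $y^k$-coefficients for $k\ge i$, the claim reduces to
\[
\frac{(i+1)k}{i(k+1)}\cdot\frac{{k+1\brace i}}{{k\brace i}}\ge 1.
\]
The recursion gives ${k+1\brace i}\ge i\,{k\brace i}$, so the left side is at least $(i+1)k/(k+1)$, which is $\ge 1$ precisely when $k\ge i$. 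Non-negativity of the extra $k=i-1$ term on the right only helps.

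For the second inequality I use the same template but expand $(e^y-1)^{i+1}/y$ via the EGF, obtaining
\[
\frac{3}{2\,i!\,i}\cdot\frac{(e^y-1)^{i+1}}{y}
=\frac{3(i+1)}{2i}\sum_{k\ge i}{k+1\brace i+1}\frac{y^k}{(k+1)!}.
\]
Matching $y^k$-coefficients reduces the claim to
\[
\frac{3(i+1)k}{2i(k+1)}\cdot\frac{{k+1\brace i+1}}{{k\brace i}}\ge 1.
\]
Now the recursion gives ${k+1\brace i+1}=(i+1){k\brace i+1}+{k\brace i}\ge{k\brace i}$, so the left side is at least $3(i+1)k/(2i(k+1))$, which in turn is $\ge 3/2$ for $k\ge i$. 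The hypothesis $e^y-1<1/2$ is therefore not actually needed for the inequality to hold; it only marks the regime where the second bound improves on the first.

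The outer coefficients $\frac{i+1}{i^2\,i!}\le\frac{4}{(i+1)!}$ and $\frac{3}{2\,i!\,i}\le\frac{3}{(i+1)!}$ are elementary, equivalent respectively to $((i+1)/i)^2\le 4$ and $(i+1)/(2i)\le 1$ for $i\ge 1$.

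I do not expect a real obstacle: the only non-obvious move is the index shift that produces a sum whose $k$-th term involves ${k+1\brace i}$ (resp.\ ${k+1\brace i+1}$), so that the elementary recursion bound ${k+1\brace i}\ge i\,{k\brace i}$ (resp.\ ${k+1\brace i+1}\ge{k\brace i}$) is exactly strong enough to close the comparison. The main risk is bookkeeping when aligning the summation ranges, in particular handling the spurious $k=i-1$ term arising in the first inequality, which is harmless because all terms are non-negative.
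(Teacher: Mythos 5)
Your proof is correct and takes a genuinely different route from the paper's. The paper sets $f_i(t)=t+\sum_{j=1}^i(1-e^t)^j/j$, uses $f_i'(t)=(1-e^t)^i$ together with $f_i(0)=0$ to get sign control, and then obtains both bounds by integrating the generating function: the first via the telescoping inequality $f_i(t)(-1)^i\le(e^t-1)^i/i$, the second via the tail of the alternating series $\ln(1+x)=-\sum(-x)^j/j$ with a remainder estimate that genuinely requires $|e^y-1|<1/2$. You instead expand both sides as power series in $y$ via the EGF $\sum_{k\ge i}{k\brace i}y^k/k!=(e^y-1)^i/i!$ after dividing by $y$ and shifting the index, and close the term-by-term comparison with the elementary recursion bounds ${k+1\brace i}\ge i\,{k\brace i}$ and ${k+1\brace i+1}\ge{k\brace i}$. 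Your route is more algebraic and, as you correctly observe, it establishes the second bound for all $y>0$ rather than only on the range $e^y-1<1/2$; the paper's argument does not get this for free because its alternating-series remainder control breaks down. The hypothesis in the statement is thus used in the paper's proof but is not intrinsic to the inequality.

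One small slip: in your first reduction the correct target after matching $y^k$-coefficients is
\[
\frac{(i+1)\,k}{i^2(k+1)}\cdot\frac{{k+1\brace i}}{{k\brace i}}\ge 1,
\]
with $i^2$ rather than $i$ in the denominator (the prefactor $\tfrac{i+1}{i^2}$ is what survives after cancelling $1/i!$, and only one factor of $i$ gets absorbed later by ${k+1\brace i}\ge i\,{k\brace i}$). After that substitution the left side becomes $\tfrac{(i+1)k}{i(k+1)}$, which is $\ge 1$ exactly when $k\ge i$ — the condition you stated. So the conclusion you wrote is right; only the intermediate display dropped a factor of $i$. The rest, including the comparison $\tfrac{i+1}{i^2\,i!}\le\tfrac{4}{(i+1)!}$ and $\tfrac{3}{2i\cdot i!}\le\tfrac{3}{(i+1)!}$, is correct.
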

\noindent
For $e^y-1<1/2$ we obtain
from (\ref{EqUpperK3}) using Fact~\ref{FactStirling} 
that
\begin{equation}
\nonumber
S_3
\le 
4.5n^{1-c}
\sum_{i=1}^{\infty}
\frac{\left(e^{y}-1\right)^{i+1}}{y(i+1)!}x^{i}
= 
\frac{4.5}{xy}n^{1-c} 
\sum_{i=1}^{\infty}
\frac{((e^y-1)x)^{i+1}}{(i+1)!}
\le
\frac{4.5}{xy}n^{1-c}e^{(e^y-1)x}.
\end{equation}
Furthermore, relations 
\begin{equation}
\label{2019-05-01+}
x(e^{np}-1)=\bigl(1+O(n^{-1}\ln n)\bigr)c\ln n
\qquad
{\text{and}}
\qquad
\
\frac{e^{npe^{-\lambda}}-1}{e^{np}-1}=\frac{c-1}{c},
\end{equation}
see  (\ref{c}) and (\ref{EqDefLambda}), 
imply $n^{1-c}e^{(e^{y}-1)x}=1+o(1)$.
Finally, we  establish the bound $S_3=o(1)$ by showing that
$xy\to+\infty$. For small $y>0$ satisfying $e^y-1<1/2$ we have
$2y>(e^y-1)$. Now (\ref{2019-05-01+})
implies
\begin{displaymath}
2xy>x(e^y-1)
=
x(e^{np}-1)(c-1)/c
=
(1+o(1))(c-1)\ln n
\to
+\infty.
\end{displaymath}
For $e^y-1\ge 1/2$ we obtain
from (\ref{EqUpperK3}) using the first inequality of Fact~\ref{FactStirling} 
that
\begin{equation}
\nonumber
S_3
\le 
6n^{1-c}
\sum_{i=1}^{\infty}
\frac{\left(e^{y}-1\right)^{i}}{y(i+1)!}x^{i}
\le 
\frac{6}{xy(e^y-1)}n^{1-c} 
e^{(e^y-1)x}
=
\frac{6}{xy(e^y-1)}(1+o(1)).
\end{equation}
Now the right side is $O(x^{-1})=o(1)$, since 
$y(e^{y}-1)\ge (\ln(3/2))/2$ is bounded away from zero.
This completes the  proof of the upper bound.

\subsection{Lower bound}

Here we define a large set of special vertexes $\Set$ and 
show that {\whp} some vertexes from $\Set$ are visited by 
$\EW_u$  only after time $t_1$, i.e., 
in the last phase before covering all the vertexes. 

\noindent
For $i_0$, $k_0$ and $\EDkiO$ defined in  {\bf P8}, let
\begin{displaymath}
I_1=\{k:i_0\le k\le k_0\text{ and } \EDkiO > k_0^{-2}e^{k\lambda_1}\},
\qquad
I_0=\{i_0,i_0+1,\dots, k_0\}\setminus I_1.
\end{displaymath}
Note that $k_0^{-2}e^{k\lambda_1}>i_0^2$ since  
$\lambda_1=(1+o(1))\lambda$ is 
bounded away from $0$ and $i_0\asymp k_0$. Hence
$I_1\subset I$,  where $I$ is from  {\bf P8}. 
Define the set of special vertexes 
\begin{equation*}
\Set
=
\left\{
v\in\V: |\W'(v)|=i_0,\, \deg(v)\in I_1,
\,
\min_{v':|\W'(v')|=i_0, v'\not=v}
{\dist}(v,v')\ge \frac{\ln n}{(\ln\ln n)^3} 
\right\}
\end{equation*}
and let $X$ be the number of vertexes in $\Set$ 
that are not visited in steps  $T,T+1,\ldots,t_1$. Note 
that $I_1\subset I$ implies
$
|\Set|=\sum_{k\in I_1}\Dstar
$, see {\bf P8}. 

\noindent
We show below that
$\EG X=\Omega(\ln^9n)$ and 
$\EG X^2-(\EG X)^2=o((\EG X)^2)$
uniformly over typical $\G$ and $u\in \V$.
These bounds yield 
$\PraG{2X>\EG X}\to 1$, by 
 Chebyshev's inequality. Hence whp 
 $X
=\Omega(\ln^9n)$. 
As the number of vertexes visited within the first $T$ steps is at most $T=O(\ln n)$ we will find in $\Set$ at least $X-T=\Omega(\ln^9 n)$ vertexes unvisited by the time $t_1$. Thus $C_u\ge t_1$.

Let us prove that $\EG X=\Omega(\ln^9 n)$.
It follows from (\ref{A++}) and {\bf P8c}  that
\begin{eqnarray}
\nonumber
\EG X
=
\sum_{v\in\Set}\PraG{\A_{t_1}(v)}
&
=
&
(1+o(1))
\sum_{k\in I_1}
\Dstar e^{-k\lambda_1}+o(1)
\\
\nonumber
&
\ge 
&
\Bigl(\frac{1}{2}+o(1)\Bigr)
\sum_{k\in I_1}
\bar{D}(k,i_0) e^{-k\lambda_1}+o(1).
\end{eqnarray}
We write the sum $\sum_{k\in I_1}
\bar{D}(k,i_0) e^{-k\lambda_1}$ in the form
\begin{displaymath}
\left(
\,
\sum_{k\ge i_0}-\sum_{k\ge k_0}-\sum_{k\in I_0}
\,
\right)
\bar{D}(k,i_0) e^{-k\lambda_1}
=:
S^*-S^*_0-S^*_1
\end{displaymath}
and show that $S^*=\Omega(\ln^9n)$ and $S^*_i=o(1)$, 
$i=0,1$. 
The bound $S^*_1\le |I_0|k_0^{-2}=o(1)$ is obvious.
Let us prove that $S^*_0=o(1)$. In the proof we use inequalities
\begin{equation}
\label{2019-05-02++1}
{k\brace i_0}\frac{1}{k!}
\le
\frac{(ke)^{i_0}i_0^k}{i_0^{2i_0}}
\frac{e^{k_0}}{k^{k_0}(k-k_0)!}
\le
\frac{(k_0e)^{i_0}}{i_0^{2i_0}}
\frac{e^{k_0}i_0^{k_0}}{k_0^{k_0}(k-k_0)!}i_0^{k-k_0}.
\end{equation}
The second inequality follows by $k_0\le k$. To get the first one we combine the inequalities
\begin{displaymath}
{k\brace i_0}
\le 
{\binom{k}{i_0}}
i_0^{k-i_0}
\le 
(ke)^{i_0}i_0^{k-2i_0}
\qquad
{\text{and}}
\qquad
k!\ge (k-k_0)!(k/e)^{k_0}
\end{displaymath}
that follow from
(\ref{2019-04-01}), $\binom{k}{s}\le (ke/s)^s$
and $k!/(k-s)!\ge (k/e)^s$ respectively
(the last inequality follows by induction on $s$).
From (\ref{2019-05-02++1}) we obtain
\begin{displaymath}
{\bar D}(k,i_0)e^{-k\lambda_1}
\le
\frac{e^{i_0}}{n^{c-1}}
\,
\left(
\frac{xk_0}{i_0^{2}}
\right)^{i_0}
\left(
\frac{ei_0y_1}{k_0}
\right)^{k_0}
\, 
\frac{(i_0y_1)^{k-k_0}}{(k-k_0)!}.
\end{displaymath}
Note that the first factor $e^{i_0}n^{1-c}\le e$.
Now summing over $k\ge k_0$ gives
 \begin{displaymath}
 S^*_0
 \le 
 e
 \cdot
 \left(
\frac{xk_0}{i_0^{2}}
\right)^{i_0}
\left(
\frac{ei_0y_1}{k_0}
\right)^{k_0}
 e^{i_0y_1}
 =
 e
 \cdot
 \left(
\frac{xe^{y_1}k_0}{i_0^{2}}
\right)^{i_0}
\left(
\frac{ei_0y_1}{k_0}
\right)^{k_0}.
 \end{displaymath}
Furthermore, using $e^{y_1}=Aa+1$, $y_1\le Aa$, and the first relation of (\ref{2019-05-01+}) we upper bound $S^*_0/e$
by
 \begin{equation}\label{2019-05-02++3}
 \left(
 \frac{(1+O(n^{-1}\ln n))}{e^{np}-1}
 \frac{c}{c-1}\frac{k_0}{i_0}(Aa+1)
 \right)^{i_0}
 \cdot
 \left(
 \frac{ei_0}{k_0}Aa
 \right)^{k_0}.
 \end{equation}
 Now assume that $c-1\le (\ln\ln n)^2/\ln n$. In this 
 case our condition $(c-1)\ln n\to+\infty$ implies
 $Aa+1=O(1)$. Using $i_0\asymp k_0$ we
 upper bound 
 (\ref{2019-05-02++3}) by
 \begin{displaymath}
 \bigl(\Theta(1)\bigr)^{i_0+k_0}
 \left(
\frac{1}{c-1}
\right)^{i_0}
(Aa)^{k_0}
=
\bigl(\Theta(1)\bigr)^{k_0}A^{k_0}(c-1)^{k_0-i_0}=o(1).
\end{displaymath}
In the first  step we used $a/(c-1)=\Theta(1)$. In the last 
step we used $k_0-i_0\asymp k_0$ and 
$A^{k_0}=O(e^{O(\ln\ln n)})$. This shows $S^*_0=o(1)$.

\noindent
Next, assume that $c-1> (\ln\ln n)^2/\ln n$.
Using $e^s\le 1+2s$ for small 
$s=10(\ln\ln n)/((c-1)\ln n)$ we bound $A\le 1+2s$. Now, the inequality $(1+2s)(c-1)/c\le 1$, which holds for $c=O(1)$, yields $aA\le e^{np}-1$. Furthermore, a crude upper bound $A\le 3/e$ yields $aA\le (e^{np}-1)(c-1)c^{-1}(3/e)$. Invoking these upper bounds for $aA$
in the first and second factors  of (\ref{2019-05-02++3}) 
and using $(1+O(n^{-1}\ln n))^{i_0}=1+o(1)\le 2$
we upper bound (\ref{2019-05-02++3}) by
\begin{eqnarray}
\nonumber
2
 \left(
\frac{k_0}{i_0}
 \frac{e^{np}}{e^{np}-1}
 \right)^{i_0}
 \left(
 3\frac{i_0}{k_0}(e^{np}-1)
 \right)^{k_0}
 \left( 
 \frac{c-1}{c}
 \right)^{k_0-i_0}
 \le
 2\left(
 \frac{i_0}{k_0}e^{np}(e^{np}-1)
 \right)^{k_0-i_0}3^{k_0}
 =o(1).
 \end{eqnarray}
In the first step we used $(c-1)/c<1$ and 
$(e^{np})^{k_0-2i_0}\ge 1$.  This proves $S^*_0=o(1)$.
 
Let us prove that $S^*=\Omega(\ln^9n)$. By properties of Stirling's numbers we have
\begin{eqnarray}
S^*
=
n^{1-c}
x^{i_0}
\sum_{k\ge i_0}
{k\brace i_0}
\frac{y_1^k}{k!}
=
n^{1-c}
x^{i_0}
\frac{(e^{y_1}-1)^{i_0}}{i_0!}.
\end{eqnarray}
Furthermore, using Stirling's approximation to $i_0!$
and invoking the relations
\begin{displaymath}
n^{1-c}\asymp e^{-i_0},
\qquad
e^{y_1}-1=A(e^{np}-1)(c-1)/c,
\qquad
x(e^{np}-1)=mp(1-e^{-np})
\end{displaymath}
we get
\begin{displaymath}
S^*
\asymp 
\frac{1}{\sqrt{i_0}}
A^{i_0}
\left(
\frac{mp(1-e^{-np})}{i_0}
\frac{c-1}{c}
\right)^{i_0}
\asymp
\frac{1}{\sqrt{i_0}}
A^{i_0}
=\Omega(\ln^9 n).
\end{displaymath}
\noindent
Finally, we  show that $\EG X^2-(\EG X)^2=o((\EG X)^2)$. 
We have 
$X(X-1)=\sum_{\{u,v\}\subset \Set}{\mathbb I}_u{\mathbb I}_v$, where ${\mathbb I}_v$ denotes the indicator of 
event ${\cal A}_{t_1}(v)$.
By (\ref{2019-04-26}),
\begin{displaymath}
\EG {\mathbb I}_x{\mathbb I}_y
=
(1+o(1))
\PraG{\A_{t_1}(u)}\PraG{\A_{t_1}(v)}
+o(n^{-3}).
\end{displaymath}
Hence 
 $\EG X(X-1)=(1+o(1))(\EG X)^2$. 
 Finally,  for $\EG X\to +\infty$
  we obtain 
 \begin{displaymath}
 \EG X^2-(\EG  X)^2
 =
 \EG X(X-1)+\EG X-(\EG X)^2=o(\EG X)^2.
 \end{displaymath}

\section{Conclusions}

We determined the expected cover time of a random walk in random intersection graph $\Gnmp$ above its connectivity threshold. Our results were compared with corresponding results obtained for Erd\H{o}s--R\'{e}nyi random graph model. This comparison led us to conclusion that the presence of clustering and specific degree distribution in affiliation networks delay the covering of the network by a random walk (with relation to the random walk on $G(n,q)$ with corresponding edge density). 

We studied the random intersection graph model introduced by Karo\'nski at al. in \cite{karonski1999}. However various different random intersection graph models have been studied since, for example, in the context of security of wireless sensor networks \cite{BlackburnGerke2009,YoganMakowski2012}   or scale free networks \cite{Bloznelis2013} (see also \cite{BloznelisGJKR2015}, \cite{FriezeKaronski2016},  and \cite{Spirakisetal2013} for more models and applications). In the context of obtained results it would be intriguing to study the cover time in other models of random intersection graphs in order to understand more the relation between clustering, degree distribution, and the expected cover time of a random walk in real life networks.

\section*{Appendix}

\appendix

\section{Proof of Lemma~\ref{LemProperties}}

Before we proceed to the proof of {\bf P0-P7}
we collect several auxiliary results.
In the proof we use  the following version of Chernoff's inequality, see (2.6) in \cite{JansonLuczakRucinski2001}.
\begin{lem}\label{LemChernoff}
	Let $X$ be a random variable with the binomial distribution and expected value~$\mu$, then for any $0<\eps<1$ 
	$$
	\Pra{X\le \eps \mu}\le \exp\left(-\psi(\eps)\mu\right),
	\text{
		where
	}
	\psi(\eps)=\eps\ln \eps + 1 - \eps. 
	$$
\end{lem}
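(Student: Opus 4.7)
The plan is to give the classical exponential Markov (Chernoff) argument, which is entirely standard. Writing $X=X_1+\cdots+X_n$ as a sum of independent Bernoulli$(q)$ variables with $\mu=nq$, I would first observe that for any $t>0$ the event $\{X\le\eps\mu\}$ coincides with $\{e^{-tX}\ge e^{-t\eps\mu}\}$, so Markov's inequality and independence yield
$$
\Pra{X\le\eps\mu}\le e^{t\eps\mu}\E e^{-tX}=e^{t\eps\mu}\bigl(1-q+qe^{-t}\bigr)^{n}.
$$
The elementary bound $1+u\le e^{u}$ applied to $u=-q(1-e^{-t})$ then gives the clean estimate $\E e^{-tX}\le\exp\{-\mu(1-e^{-t})\}$, so that
$$
\Pra{X\le\eps\mu}\le\exp\bigl\{-\mu\bigl((1-e^{-t})-\eps t\bigr)\bigr\}.
$$

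The next step is to optimize the exponent over $t>0$. Differentiating $(1-e^{-t})-\eps t$ in $t$ gives $e^{-t}-\eps$, which vanishes at $t_{\star}=-\ln\eps$; this value is positive precisely because $0<\eps<1$, so it is an admissible choice. Substituting $e^{-t_{\star}}=\eps$ one obtains $(1-e^{-t_{\star}})-\eps t_{\star}=(1-\eps)+\eps\ln\eps=\psi(\eps)$, which produces the claimed bound $\Pra{X\le\eps\mu}\le\exp(-\psi(\eps)\mu)$.

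I do not foresee a genuine obstacle: the only routine checks are that $t_{\star}=-\ln\eps>0$ is admissible (which is immediate from $\eps<1$) and that $\psi(\eps)\ge 0$ on $(0,1)$, which follows from the convexity of $u\mapsto u\ln u$ together with $\psi(1)=\psi'(1)=0$. The result is a standard lower-tail Chernoff inequality for the binomial distribution, stated for instance as (2.6) in \cite{JansonLuczakRucinski2001} and explicitly cited there by the authors, so in the paper itself it would be reasonable simply to quote the reference rather than reproduce the derivation.
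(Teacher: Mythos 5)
Your derivation is the standard exponential-Markov (Chernoff) argument and it is correct in every step: the tilting, the bound $1+u\le e^u$, the optimization at $t_\star=-\ln\eps$, and the identification of the resulting exponent with $\psi(\eps)$ all check out. The paper itself does not reproduce any proof of this lemma; it simply states it and cites (2.6) of \cite{JansonLuczakRucinski2001}, which is exactly the treatment you suggest in your closing remark, so your account and the paper's are in agreement.
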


\begin{fact}\label{FactSmallCycles}
	There exists a
	constant  $\adistance>0$  depending on the sequences $\{p(m,n)\}$ and $\{c(n,m)\}$ such that {\whp} 
	\begin{itemize}
		\item[(i)] any two $\B$--cycles of length at most $\adistance \ln n/\ln\ln n$ are at least $\adistance \ln n/\ln\ln n$ links apart from each other. 
		\item[(ii)] any two small vertexes are at least $\adistance \ln n/\ln\ln n$ links apart from each other.
		\item[(iii)] any  small vertex is at least $\adistance \ln n/\ln\ln n$ links apart from any $\B$--cycle of length at most $\adistance \ln n/\ln\ln n$.	
	\end{itemize}
\end{fact}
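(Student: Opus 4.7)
All three assertions (i)--(iii) will be obtained by bounding the expected number of ``bad'' bipartite subconfigurations in $\B$ and applying Markov's inequality. Set $L = \adistance \ln n/\ln\ln n$ for a small constant $\adistance > 0$ to be chosen at the end. For any connected bipartite subgraph shape $H$ with $v_1$ nodes on the $\V$-side, $v_2$ nodes on the $\W$-side and $e_H$ edges, the expected number of copies in $\B$ is at most
\begin{displaymath}
n^{v_1} m^{v_2} p^{e_H} = (np)^{v_1}(mp)^{v_2}\,p^{c_H-1},
\end{displaymath}
where $c_H = e_H - v_1 - v_2 + 1$ is the cyclomatic number. Since $np = \Theta(1)$, $mp = \Theta(\ln n)$ and $p = \Theta(1/n)$, this evaluates to $\Theta((\ln n)^{v_2} p^{c_H - 1})$. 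In every bad configuration the number of attributes satisfies $v_2 \le 3L/2$ (each cycle or path segment contributes at most $L/2$ attributes), so $(\ln n)^{v_2} \le n^{3\adistance/2}$.

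\textbf{Proof of (i).} Two short $\B$-cycles joined by a short $\B$-path form a connected subgraph of cyclomatic number $c_H \ge 2$, providing an extra saving factor $p = O(1/n)$. The expected count per shape is therefore $O(n^{3\adistance/2 - 1})$. Summing over the $\mathrm{poly}(\ln n)$ many possible isomorphism types (parameterised by the two cycle lengths and the connecting path length, each $\le L$), Markov's inequality gives (i) for any $\adistance < 2/3$.

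\textbf{Proofs of (ii) and (iii).} Here $c_H \in \{0,1\}$, so no $p$-saving is available; instead the saving comes from the smallness constraint. Observe that $|\W'(v)| = \sum_{w\in\W}X_{v,w}$, where $X_{v,w} := \mathbf{1}\{v\in\V(w)\text{ and }|\V(w)|\ge 2\}$ are independent across $w$, so $|\W'(v)| \sim \Bin{m}{d_0/m}$ with mean $d_0 = c\ln n(1+o(1))$. By Lemma~\ref{LemChernoff},
\begin{displaymath}
\Pra{v\in\SMALLv} \le \exp\bigl(-\psi(0.1/c)\,d_0\bigr) \le n^{-\gamma + o(1)},
\qquad
\gamma := c\,\psi(0.1/c).
\end{displaymath}
A direct computation gives $\gamma > 0.67$ at $c=1$ and $\gamma$ increasing in $c$, so $\gamma$ is bounded away from $1/2$ uniformly in the admissible range of $c$. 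For (iii), multiplying the expected count $\Theta((\ln n)^{v_2}) = O(n^{\adistance})$ of the ``cycle plus dangling path'' shape by the $n^{-\gamma}$ smallness factor for the endpoint gives $O(n^{\adistance - \gamma + o(1)})$, which is $o(1)$ for $\adistance < \gamma$. For (ii) one needs the joint smallness estimate $\Pra{v,v'\in\SMALLv} \le n^{-2\gamma + o(1)}$; combined with the expected number of length-$\le L$ bipartite paths between $v$ and $v'$ summing to $O(n(\ln n)^{L/2}) = O(n^{1+\adistance/2})$, this yields $O(n^{1-2\gamma + \adistance/2 + o(1)})$, which is $o(1)$ as soon as $\adistance < 4\gamma - 2$. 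Choosing $\adistance < \min\{2/3,\ \gamma,\ 4\gamma - 2\}$ (positive by $\gamma > 1/2$) establishes all three claims.

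\textbf{Main obstacle.} The delicate point is the joint bound $\Pra{v,v'\in\SMALLv} \le n^{-2\gamma + o(1)}$ required for (ii): the events are positively correlated through the shared attribute pool, so a naive product of marginal bounds fails. The fix is to write $|\W'(v)| + |\W'(v')| = \sum_{w}S_w$ with $S_w := X_{v,w}+X_{v',w} \in\{0,1,2\}$ \emph{independent} across attributes $w\in\W$, and apply Chernoff's inequality directly to this independent sum; the resulting Legendre rate matches $2\gamma$ up to $o(1)$. A secondary technical point is that conditioning on a specific short $\B$-path (or $\B$-cycle) through $v$ exposes only $O(L) = o(\ln n)$ of $v$'s attributes, hence shifts the Chernoff exponent only by $o(1)$ and does not disturb any of the final bounds.
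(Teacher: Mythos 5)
Your proposal is correct and takes essentially the same first-moment route as the paper: bound the expected number of bad bipartite configurations by subgraph/path counting combined with a Chernoff estimate for the smallness events, then apply Markov's inequality.

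The one place your bookkeeping genuinely differs from the paper is in the joint smallness estimate for (ii). You apply Chernoff to the iid sum $|\W'(v)|+|\W'(v')|=\sum_{w\in\W}S_w$ with $S_w\in\{0,1,2\}$ independent across attributes, whereas the paper instead Chernoff-bounds the union $|\W'(v)\cup\W'(v')|$ conditional on the connecting path. Both yield the exponent $2c\,\psi(0.1/c)\,(1+o(1))$, and your version is in fact the tidier one: the paper states that the union (off the path) has distribution $\Bin{m-t}{1-(1-p)^2}$, but that is the distribution of $|(\W(v)\cup\W(v'))\setminus\{w_1,\dots,w_t\}|$, not of the primed version — for $\W'$ the per-attribute probability carries the extra factor $1-(1-p)^{n-1}$, so the correct mean is $2\dO=2c\ln n$ rather than $2mp$. (The paper's final inequality is unaffected, since $2c\,\psi(0.1/c)\ln n>1.2\ln n$ holds with either mean, and your formulation sidesteps the issue entirely.) One cosmetic nit: $\psi(0.1)\approx 0.6697$, so $\gamma$ is slightly \emph{below} $0.67$ at $c=1$, not above; but all that your argument needs, and correctly uses, is that $\gamma>1/2$ uniformly in $c$.
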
	 

\begin{fact}\label{Fact4cycles}
	With high probability there are at most $\ln^3n$ $\B$--cycles consisting of $4$ links.
\end{fact}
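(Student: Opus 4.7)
The plan is a direct first-moment calculation followed by Markov's inequality. Let $X$ denote the number of $\B$--cycles of length $4$ in $\Bnmp$. Any such cycle is uniquely specified by an unordered pair $\{v_1,v_2\}\subset\V$ together with an unordered pair $\{w_1,w_2\}\subset\W$ such that all four links $v_iw_j$ are present in $\B$. Since links are independent and each present with probability $p$, the union bound (or direct counting) yields
\begin{equation*}
\E X = \binom{n}{2}\binom{m}{2}p^{4}
= \frac{(np)^{2}(mp)^{2}}{4}\bigl(1+O(n^{-1})\bigr).
\end{equation*}

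Under the assumptions of Theorem~\ref{ThmMain} we have $np=\Theta(1)$ and, by the definition of the connectivity threshold together with $np=\Theta(1)$, also $mp = \Theta(\ln n)$; see \eqref{2019-02-25+1}. Consequently $\E X = \Theta(\ln^{2} n)$.

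Applying Markov's inequality,
\begin{equation*}
\Pr\{X \ge \ln^{3} n\} \le \frac{\E X}{\ln^{3} n} = O(\ln^{-1} n) = o(1),
\end{equation*}
which is the claim. The argument is entirely routine; there is no real obstacle since the first moment is polylogarithmic and the target bound $\ln^{3}n$ leaves a factor $\ln n$ of slack. (A second-moment refinement would give a sharper concentration around $\E X$, but this is not needed for the statement.)
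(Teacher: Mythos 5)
Your proof is correct and follows essentially the same route as the paper: a first-moment bound of $\E X = O(\ln^2 n)$ (the paper uses the slightly cruder $n^2m^2p^4$, you the exact $\binom{n}{2}\binom{m}{2}p^4$) followed by Markov's inequality. No further comment is needed.
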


\begin{fact}\label{FactVw}	
	With high probability
	$|\V(w)|\le \frac{\ln n}{\ln\ln n}\max\{2,np\}$ for 
	every $w\in\W$.
	
\end{fact}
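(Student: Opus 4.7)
The plan is to combine a first-moment / tail bound on each $|\V(w)|$ with a union bound over the $m = \Theta(n\ln n)$ attributes. Recall that $|\V(w)|$ has distribution $\Bin{n}{p}$ with mean $np = \Theta(1)$, and set $L = L(n) := \lceil (\ln n/\ln\ln n)\max\{2,np\}\rceil$.

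First I would apply the standard binomial tail inequality
$$
\Pr\{|\V(w)|\ge L\}
\le
\binom{n}{L}p^L
\le
\left(\frac{enp}{L}\right)^L.
$$
The next step is to show that the right-hand side is $n^{-2+o(1)}$. Write $c_1 := \max\{2,np\}$, so that $L\ln\ln n = c_1\ln n \cdot (1+o(1))\ge 2\ln n\cdot(1+o(1))$. Then
$$
\ln\!\left(\frac{np}{L}\right)
=
\ln\!\left(\frac{np}{c_1}\right)+\ln\ln\ln n-\ln\ln n
\le
-\ln\ln n+\ln\ln\ln n+O(1),
$$
since $np/c_1\le 1$. Multiplying by $L$ gives
$$
L\ln\!\left(\frac{enp}{L}\right)
=
L + L\ln\!\left(\frac{np}{L}\right)
\le
L - (1-o(1))L\ln\ln n
\le
-(2-o(1))\ln n,
$$
since $L = o(\ln n)$ while $L\ln\ln n\ge 2\ln n(1+o(1))$. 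Hence $\Pr\{|\V(w)|\ge L\}\le n^{-2+o(1)}$.

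Finally I would union-bound over $w\in\W$. Since $m=\Theta(n\ln n)$,
$$
\Pr\{\exists w\in\W:\, |\V(w)|\ge L\}
\le
m\cdot n^{-2+o(1)}
=
n^{-1+o(1)}
=
o(1),
$$
which is exactly the assertion of Fact~\ref{FactVw}. The calculation is essentially routine; the only delicate point is keeping track of the two regimes $np\ge 2$ and $np<2$ simultaneously via the factor $c_1=\max\{2,np\}$, which is precisely designed to force $L\ln\ln n\ge 2\ln n$ and thus give the extra $\ln n$ room needed to absorb the $m = \Theta(n\ln n)$ in the union bound. No obstacles of substance are expected; the argument mirrors the standard maximum-load-in-balls-and-bins computation.
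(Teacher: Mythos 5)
Your proof is correct and follows essentially the same route as the paper: both bound $\Pr\{|\V(w)|\ge L\}$ by $\binom{n}{L}p^L\le (enp/L)^L$ and apply a union bound over the $m=\Theta(n\ln n)$ attributes, with the factor $\max\{2,np\}$ in $L$ ensuring the $L\ln L\gtrsim 2\ln n$ decay that absorbs $\ln m$. The paper simply collapses your explicit exponent bookkeeping into $\exp(\ln m - s\ln s + O(s)) = o(1)$, so the two arguments are the same computation presented at slightly different levels of detail.
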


\begin{proof}[Proof of Fact \ref{FactSmallCycles}]
	Given $\adistance>0$ 
	let $j_0:=\lfloor \adistance \ln n/\ln\ln n\rfloor$.  
	
	\noindent
	Proof of (i). The expected number of pairs of $\B$-cycles 
	$C_k$, $C_r$ of length $2k$ and $2r$ which are connected by a (shortest) path of length $i\ge 1$ (links)
	containing
	$i_1$ internal  vertexes and  $i_2$ 
	internal attributes  
	(those outside $C_k$ and $C_r$)  
	is at most
	\begin{equation}\label{2018-10-02}
	{\binom{n}{k}}{\binom{m}{k}}p^{2k}
	{\binom{n}{r}}{\binom{m}{r}}p^{2r}
	{\binom{n}{i_1}}{\binom{m}{i_2}}p^{i}
	4(k!r!)^2i_1!i_2!
	\le 
	4n^{k+r+i_1}m^{k+r+i_2}p^{2k+2r+i}.
	\end{equation}
	Note that $i=i_1+i_2+1$ and for 
	$i=1$ we have  $i_1=i_2=0$. 
	Assuming that $n\le m$ and $mp\ge 1$ 
	we see that  the sum of (\ref{2018-10-02}) 
	over
	$2\le k,r\le j_0$ and $0\le i_1+i_2\le j_0$ 
	is at most
	\begin{equation}\label{2019-05-09+1}
	\sum_{k=2}^{j_0}
	\sum_{r=2}^{j_0}
	\sum_{i_1=0}^{j_0}
	\sum_{i_2=0}^{j_0}
	4p(mp)^{2k+2r+i-1}
	\le 
	4j_0^4p(mp)^{5j_0}
	=o(1).
	\end{equation}
	To achieve the last bound we choose 
	$\adistance>0$ sufficiently small. 
	We obtain that the expected number of pairs of $\B$-cycles of length at most $2j_0$  that are in the distance $d\in [1,j_0]$ is $o(1)$. 
	Hence {\whp} we do not observe such a pair.
	
	\noindent
	Next we count pairs of $\B$--cycles $C_k$, $C_r$ that share at least one vertex or attribute. For any $\B$--cycle $C$ we denote by $\V_C$ the set of its vertexes and by $\W_C$ the set of its attributes. Let $u\in \bigl(\V_{C_r}\cup \W_{C_r}\bigr)\setminus \bigl(\V_{C_k}\cup \W_{C_k}\bigr)$. We can walk from $u$ along $C_r$ in two directions until we reach  the set 
	$\V_{C_k}\cup \W_{C_k}$. In this way we obtain a path belonging to $C_r$ and with endpoints in 
	$\V_{C_k}\cup \W_{C_k}$. Internal vertexes/attributes of the path do not belong to $C_k$.
	By $i$ we denote the length of the path (number of links). 
	$i_1$ and $i_2$ are the numbers of internal  vertexes and attributes of the path  ($i=i_1+i_2+1$, $|i_1-i_2|\le 1$).
	The union of $C_k$ and the path defines an 
	eared $\B$-cycle. The expected number of such eared cycles is at most
	\begin{equation}\label{2018-10-02+1}
	{\binom{n}{k}}{\binom{m}{k}}p^{2k}{\binom{n}{i_1}}{\binom{m}{i_2}}p^{i} 4k(k!)^2i_1!i_2!
	\le 
	4kn^{k+i_1}m^{k+i_2}p^{2k+i}
	\end{equation} 
	Assuming that $n\le m$ and $mp\ge 1$ 
	we see that  the sum of (\ref{2018-10-02+1}) 
	over
	$2\le k\le j_0$ and $2\le i\le 2j_0$ 
	is at most 
	\begin{displaymath}
	\sum_{k=2}^{j_0}
	\sum_{i_1=1}^{j_0}
	\sum_{i_2=1}^{j_0}
	4kp(mp)^{2k+i-1}
	\le 
	4j_0^4p(mp)^{4j_0}
	=o(1).
	\end{displaymath} 
	The last bound follows by our choice of 
	$\adistance>0$ in (\ref{2019-05-09+1}). 
	We obtain that the expected number of eared $\B$-cycles (where the cycle and the ear have at most $j_0$ vertexes and $j_0$ attributes  each) is $o(1)$. 
	Hence {\whp} we do not observe a pair of $C_k$, $C_r$ with $2\le k,r\le j_0$ that share at least one vertex or attribute.	
	\medskip

	\noindent  Proof of (ii). 
	Assume that there exist two vertexes  $v,v'\in\SMALLv$, 
	which are $2t$ links apart  
	blue
	in $\B$ for some $2t\le j_0$. Then blue $\B$ contains a 
	path $vw_1v_1w_2\ldots v_{t-1}w_tv'$ of length $2t$ and 
	the set 
	$(\W'(v)\cup\W'(v'))\setminus \{w_1,\ldots,w_t\}$ is of 
	cardinality at most $0.2\ln n$.
	The number of possible paths of the form  
	$vw_1v_1w_2\ldots v_{t-1}w_tv'$ is at most 
	$n^{t+1}m^{t}$ and the probability that a path of 
	length $2t$ is present in $\Bnmp$ is $p^{2t}$. 
	Furthermore, given the event that the path  
	$vw_1v_1w_2\ldots v_{t-1}w_tv'$ is present, the 
	cardinality of the set $(\W'(v)\cup\W'(v'))\setminus \{w_1,\ldots,w_t\}$ has the binomial distribution 
	$\Bin{m-t}{1-(1-p)^2}$. By Lemma~
	\ref{LemChernoff}, 
	this cardinality is less than 
	$0.2\ln n$ with probability at most $e^{-1.2\ln n}$. Indeed,
	the binomial distribution has the expected value $\mu=2mp+o(1)>2c\ln n>2\ln n$.
	Finally, by the union bound
	the probability that there exist two small vertexes within the distance $j_0$ (links)  
	is at most
	\begin{displaymath}
	\sum_{t=1}^{j_0/2}n^{t+1}m^{t}p^{2t}e^{-1.2\ln n}
	\le 
	j_0 n (nmp^2)^{j_0/2} e^{-1.2\ln n}  
	= o(1).
	\end{displaymath}
	\medskip
	
	\noindent 
	Proof of (iii)  is a combination of
	those of  (i) and (ii). 
\end{proof} 

\begin{proof}[Proof of Fact~\ref{Fact4cycles}]
	The expected number of $\B$--cycles consisting of $4$ links is at most
	$n^2m^2p^4 =O(\ln^2 n)$.
	The fact follows by  Markov's inequality.
\end{proof}

\begin{proof}[Proof of Fact \ref{FactVw}]
For shortness let
	$s
	=
	\max\{2,np\}(\ln n)/\ln\ln n$. By the union bound,
	\begin{align*}
	\Pra{\exists_{w\in \W} |\V(w)|\ge s}
	&\le 
	m
	\Pra{|\V(w)|\ge s}
	\le m\binom{n}{s}p^{s}
	\le m \left(\frac{enp}{s}\right)^{s}
	\\
	&\le 
	\exp
	\left(
	\ln m - s\ln s + O(s)
	\right)=o(1).
	\end{align*}		
\end{proof}

\noindent
Proof of {\bf P0-P7}.

\medskip

\noindent{\bf P0} Any triple of 
vertexes  linked to some $w$ in $\B(n,m,p)$ induce the triangle 
in  $G(n,m,p)$. The degrees of attributes $w\in W$ in  $\B(n,m,p)$
are independent Binomial random variables with mean 
$pn=\Theta(1)$.  Therefore, the   maximal degree is greater than 2 whp.
Hence $\G(n,m,p)$ contains a triangle whp.
For the connectivity property we refer to 
\cite{Katarzyna2017}. 
\medskip

\noindent {\bf P1} 
Let $X$ be  binomial $\Bin{n}{p}$ random variable and let
$$
Y_1=\sum_{w\in\W}\binom{|\V(w)|}{2},
\qquad
Y_2=\sum_{w,w'}\binom{|\V(w)\cap \V(w')|}{2}.
$$
We have, by the inclusion-exclusion argument, 
\begin{equation}
\label{EqEdges}
Y_1 - Y_2 \le |{\cal E}(\Gnmp)|\le Y_1.
\end{equation} 
Note that $Y_2$ is 
the number of $\B$--cycles with $4$ links. Hence
$Y_2\le \ln^3n$ {\whp}, by Fact~\ref{Fact4cycles}.
Furthermore, as $|\V(w)|$, $w\in\W$, are independent 
copies of $X$,
we obtain for $np=\Theta(1)$ that
\begin{align*}
\E Y_1 &=
m\frac{\E X(X-1)}{2}
=  
m\frac{n(n-1)p^2}{2}=
m\cdot\Theta(1)=\Theta(n\ln n),
\\
\Var Y_1 &= m \frac{\E X^2(X-1)^2 - (n(n-1)p^2)^2}{2}
= 
m\cdot \Theta(1)=\Theta(n\ln n).
\end{align*}
Now, Chebyshev's inequality implies
$
\Pr\{|Y_1-\E Y_1|> \sqrt{\E Y}\ln^{1/4}n\}=o(1)
$.
This bound combined with $Y_2=O(\ln^3n)$ and 
\eqref{EqEdges}
shows {\bf P1}.

\medskip

\noindent {\bf P3} Proof of $\deg(v)\le \Delta$.
Given $A\subset \W$ of size 
$|A|\le 4mp$, let $Y$ be the number of vertexes $v\in \V$ linked to at least one attribute from $A$. The random variable $Y$ has binomial distribution 
$\Bin{n}{1-(1-p)^{|A|}}$.
Let $Y'$ be a random variable with the distribution
$\Bin{n}{4d_1/n}$. 
Inequalities 
$(1-p)^{|A|}
\ge 
(1-p)^{ 4mp}\ge 1-4d_1/n$
imply that $Y$ is stochastically dominated by $Y'$, 
i.e., $\Pr\{Y>s\}\le \Pr\{Y'>s\}$, $\forall s>0$.
We have
%
%
\begin{align*}
\Pr\{\deg(v)> 12d_1\}
&
\le 
\Pr\bigl\{\deg(v)> 12d_1\bigl|\,|\W(v)|\le 4mp\bigr\}
+
\Pr\{|\W(v)|>4mp\}
\\
&
\le
\Pr\{Y'>12d_1\}+\Pr\{|\W(v)|>4mp\}
\\
&\le 
\binom{n}{\lceil 12\djeden\rceil}\left(\frac{4\djeden}{n}\right)^{\lceil 12\djeden\rceil}
+
\binom{m}{\lceil 4mp\rceil}p^{\lceil 4mp\rceil}
\\
&\le 
\left(\frac{e}{3}\right)^{\lceil 12\djeden\rceil}
+\left(\frac{e}{4}\right)^{\lceil 4mp\rceil}
=o(n^{-1}).
\end{align*}
Now, by the union bound $\Pra{\exists_{v\in \V} \deg(v)> 12\djeden}
\le
n\Pra{ \deg(v)>12\djeden}=o(1)$. We conclude that $\max_{v\in \V}\deg(v)> \Delta$ {\whp}.

\noindent
Proof of $|\W'(v)|\le \Delta$.
The random variable $|\W'(v)|$ has binomial distribution $\Bin{m}{p_*}$ with
$p_*=p(1-(1-p)^{n-1})=d_0/m$. We have
\begin{align*}
\Pr\{|\W'(v)|>4d_0\}
\le 
{\binom{m}{\lceil 4d_0\rceil}}
p_*^{\lceil 4d_0\rceil}
\le  
\left(\frac{emp_*}{\lceil 4d_0\rceil}
\right)^{\lceil 4d_0\rceil}
=o(n^{-1}).
\end{align*}
Hence,
$\Pra{\exists_{v\in \V} |\W'(v)|> 4d_0}
\le
n\Pra{|\W'(v)|> 4d_0}=o(1)$.

\medskip
\noindent{\bf P4} Any pair of adjacent  vertexes $v,v'\in \V$ share at most two common attributes (otherwise there were two intersecting $\B$-cycles of length $4$, the event ruled out by Fact~\ref{FactSmallCycles}(i)). Assume that $v,v'$ share two  attributes $w, w'$. In this case all common neighbors of  $v,v'$ belong to 
$\V(w)\cup\V(w')$
(otherwise there were two intersecting $\B$-cycles of length at most $6$). Now  
{\bf P4} follows from Fact~\ref{FactVw}.
Next, assume that $v,v'$ share only one attribute $w$. In this case there might be at most one common neighbor of $v,v'$ outside $\V(w)$ (otherwise there were two intersecting $\B$-cycles of length at most $6$). Hence the number of common neighbors is at most $|\V(w)|-2+1$ and we obtain {\bf P4} from 
Fact~\ref{FactVw}.
\medskip

\noindent {\bf P5} If  $\deg(v)\le |\W'(v)|-2$ then we either find $u\in \V\setminus \{v\}$ linked to at least three different elements of $\W'(v)$ or we find 
$u_1,u_2\in\V\setminus \{v\}$ such that $u_i$ is linked to at least two elements of $\W'(v)$ for each $i=1,2$. In both cases
there is a pair of short $\B$-cycles 
containing $v$, the event ruled out by Fact~\ref{FactSmallCycles}(i). Hence $d(v)\ge |\W'(v)|-1$ {\whp}. For a large vertex $v$, the latter inequality implies $\deg(v)\ge (\ln n)/11$.
\medskip

\noindent{\bf P6} Assume that we find $v\in \V$ and  $u_1^*, u_2^*, u_3^*\in N_{i-1}(v)$ and
$u\in N_i(v)$ such that $u$ is adjacent to  each $u_1^*, u_2^*, u_3^*$. Vertex 
$u$ is $2i$ links apart from $v$ in $\B$.
Furthermore, each $u_j^*$ is  $2(i-1)$ links apart from 
$v$ and $2$ links apart from $u$, for $j=1,2,3$.  Therefore we find  three distinct  shortest paths  connecting $u$ and $v$ in $\B$ (via $u_1^*, u_2^*$ and $u_3^*$). These paths create at least two short $\B$-cycles close to $u$. But, by Fact~\ref{FactSmallCycles}(i),  there are no such cycles {\whp}. 
\medskip

\noindent{\bf P7} follows by Fact~\ref{FactSmallCycles}.


\section{Proof of Facts \ref{FactLambda},  \ref{FactStirling}}

\begin{proof}[Proof of Fact~\ref{FactLambda}]
	Proof of (i). For $y>x>0$ we have 
	$\ln (1+y)-\ln (1+x)> (y-x)\ln'(1+y)$, since $x\to\ln'(1+x)$ is decreasing.  We apply 
	$\ln(1+y)-(y-x)(1+y)^{-1}>\ln(1+x)$ to
	$x=a$ and $y=e^{np}-1$ and obtain
	\begin{displaymath}
	\ln(1+a)<np-(1-e^{-np})/c.
	\end{displaymath}	
	For $np=\Theta(1)$ and $1<c=O(1)$ we find  and absolute 
	constant $\delta>0$ such that $(1-e^{-np})/c>\delta$. 
	Hence
	$\lambda>\ln(np/(np-\delta))$ is bounded away from 
	zero.

	\noindent
	Proof of (ii).
	$(c-1)\ln n\to\infty$ implies $a>2\ln^{-1}n$ for large $n$. Furthermore,  $\ln(x+1)\ge x/2$, 
	for $0<x<2$, implies
	$\ln (a+1)>\ln^{-1}n$. Therefore
	$$
	\lambda
	\le 
	\ln\bigl(np/\ln n\bigr)  
	\le  
	\ln np 
	+ 
	\ln\ln n
	\le 2\ln\ln n.
	$$
	
	\noindent 
	Proof of (iii). Using $\ln (1+a)\le a$ 
	we bound from below
	\begin{equation}\label{2019-04-26++1}
	\lambda
	\ge
	\ln (np/a)
	=
	\ln \frac{cnp}{e^{np}-1}
	+
	\ln((c-1)^{-1})
	= 
	O(1)
	+
	\ln((c-1)^{-1}).
	\end{equation} 
	For $c-1\le \ln^{-1/3}n$ and large $n$ the right side is at least
	$4^{-1}\ln\ln n$.	
	
	\noindent 
	Proof of (iv). Our assumptions $np=\Theta(1)$ and $1<c=O(1)$ imply that  the lower bound of 
	$\ln \frac{cnp}{e^{np}-1}$, denoted by $b$, is finite. 
	Let us first show that $1+|\ln(c-1)|=O(\lambda)$.
	For 
	$1<c<1.5$ and $|\ln (c-1)|>2\max\{-b, 0\}$ 
	this relation follows from (\ref{2019-04-26++1}). Otherwise it follows from (i).
	Now we show that $\lambda=O(1+|\ln(c-1)|)$.
	For $c\le \min\{1.5,(e^{np}-1)^{-1}\}$ we have $a\le1$ and  
	inequality $\ln(1+a)\ge a/2$ implies
	\begin{equation}
	\nonumber
	\lambda
	\le
	\ln (2np/a)
	=
	\ln \frac{2cnp}{e^{np}-1}
	+
	\ln((c-1)^{-1})
	= 
	O(1)
	+
	\ln((c-1)^{-1}).
	\end{equation} 
	For $c> \min\{1.5,(e^{np}-1)^{-1}\}$ we have 
	$a=\Theta(1)$. This implies $\lambda=O(1)$.

	\noindent
	Proof of (v).
	In view of
	$\lambda>\lambda_1$ it suffices to show that 
	$\lambda
	\le \lambda_1+o(\lambda)$.
	We firstly assume that 
	$(c-1)\ln n\le (\ln\ln n)^2$. By the inequalities
	$x(1-x)\le \ln(1+x)\le x$, we have
	\begin{displaymath}
	\lambda-\lambda_1
	=
	\ln \frac{\ln \left(aA+1\right)}{\ln \left(a+1\right)}\le \ln \frac{aA}{a(1-a)}
	=\ln A -\ln (1-a)= \ln A+o(1).
	\end{displaymath}
	Furthermore, 
	
	\begin{displaymath}
	\ln A
	= 
	\ln (c-1)^{-1}\frac{10}{(c-1)\ln n}
	+\frac{10\ln((c-1)\ln n)}{(c-1)\ln n}=o(\ln (c-1)^{-1})+o(1).
	\end{displaymath}
	In the last step we used $(c-1)\ln n\to+\infty$.
	We obtain $\lambda-\lambda_1=o(|\ln(c-1)|+1)$.
	Now (v) follows from (iv). 
	
	\noindent
	Next we assume that
	$(c-1)\ln n> (\ln\ln n)^2$. In this case $\ln A\le 10/\ln\ln n$. By the inequality
	$e^x-1\le 2x$, for  $0\le x\le 1/2$, we have
	$$
	A-1
	\le
	2\ln A
	\le
	20/\ln\ln n.
	$$
	Set $f(x)=\ln\ln (ax+1)$. Then $f'(x)=a(\ln (ax+1))^{-1}(ax+1)^{-1}$ is a decreasing function, 
	for $x\ge 0$. From this fact and the inequalities 
	$A>1$ and $a/(1+a)\le \ln(1+a)$ we obtain
	\begin{align*}
	\lambda-\lambda_1
	&=
	\ln\ln (aA+1)-\ln\ln (a+1)
	= 
	f(A)-f(1)\\
	&
	\le f'(1)(A-1)
	=\frac{1}{\ln (a+1)}\frac{a}{1+a}(A-1)\le A-1 \le \frac{20}{\ln\ln n} = o(\lambda).
	\end{align*}
	In the last step we used  (i).

	\noindent
	Proof of (vi). We  write for short $x=np$, $b=(c-1)/c$, $z=e^{x}$.
	To prove the first inequality
	we show that $x(c\kappa-1)/(c\kappa)>\ln (b(e^x-1)+1)$.
	To this aim we establish the same inequality, but for  the respective derivatives $\partial/\partial x$. Indeed,  the derivative of the left side
	$1-e^{-x}(1-b)$ is greater than  that  of the right side $(b(e^x-1)+1)^{-1}be^{x}$ because  we have $(1-(1-b)z^{-1})(b(z-1)+1)>bz$, for $0<b<1$ and $z>1$.
	
	\noindent
	To prove the second inequality we show that $x\ln^{-1}(b(e^x-1)+1)<b^{-1}$. We have
	$xb<\ln(b(e^x-1)+1)$ because the respective inequality holds for the derivatives $\partial/\partial x$.

\end{proof}

\begin{proof}[Proof of Fact~\ref{FactStirling}]
	We start with auxiliary
	function
	$f_i(t)=
	t+\sum_{j=1}^i(1-e^t)^jj^{-1}$, $i=1,2,\dots$. 
	Its derivative $f'_i(t)=(1-e^t)^i$ satisfies $f_i'(t)(-1)^i=(e^t-1)^i>0$,  for $t>0$. Now  $f_i(0)=0$ implies $f_i(t)(-1)^i>0$ for $t>0$ and $\forall i$. Using this fact we obtain the upper bound
	\begin{equation}\label{2019-05-06}
	f_i(t)(-1)^i
	=
	(e^t-1)^ii^{-1}-f_{i-1}(t)(-1)^{i-1}
	\le 
	(e^t-1)^ii^{-1},
	\qquad
	t>0,
	\quad 
	\forall i.
	\end{equation}
	Furthermore, the identity $\ln(1+x)=-\sum_{j\ge 1}(-x)^{j}j^{-1}$ with $x=e^t-1$ implies
	\begin{equation}\label{2019-05-06+2}
	f_i(t)
	=
	\ln(1+x)+\sum_{1\le j\le i}(1-e^t)^jj^{-1}
	=
	-\sum_{j\ge i+1}(1-e^t)^jj^{-1}.
	\end{equation}
	\noindent
	Finally, the well known identity
	$
	\sum_{k=i}^{\infty}{k\brace i}\frac{t^k}{k!}=
	\frac{(e^t-1)^i}{i!}
	$ implies
	\begin{equation}\label{2019-05-06+1}
	f_i(y)\frac{(-1)^i}{i!}
	=
	\int_0^y
	\frac{(e^t-1)^i}{i!}
	dt
	=
	\sum_{k\ge i}{k\brace i}\frac{y^{k+1}}{(k+1)!}.
	\end{equation}
	The first inequality of Fact~\ref{FactStirling} 
	is an immediate consequence of 
	(\ref{2019-05-06}), (\ref{2019-05-06+1}).
	The second inequality of Fact~\ref{FactStirling} 
	follows from (\ref{2019-05-06+1}) combined with  
	(\ref{2019-05-06+2}). In this case we have
	\begin{displaymath}
	f_i(y)\frac{(-1)^i}{i!}
	=
	\frac{(-1)^{i+1}}{i!}
	\sum_{j\ge i+1}
	\frac{(1-e^{y})^j}{j}
	=
	\frac{(e^y-1)^{i+1}}{(i+1)!}\Bigl(1+(-1)^{i+1}R\Bigr),
	\end{displaymath}
	where $R=\sum_{k\ge 1}(1-e^y)^k(i+1)/(i+1+k)$. Note that $|e^y-1|<1/2$ implies $|R|\le 1/2$.
\end{proof}

\bibliographystyle{plain}
\normalfont

\end{document}